%% file: main.tex
\documentclass[11pt, a4paper]{article}

\usepackage[T1]{fontenc}
\usepackage[utf8]{inputenc}
\usepackage{graphicx}
\usepackage{amsmath}
\usepackage{amssymb}
\usepackage{amsfonts}
\usepackage{graphicx}
\usepackage{tikz}
\usetikzlibrary{calc}
\usepackage{mathtools}
\usepackage{stmaryrd}
\usepackage{amsthm}
\usepackage{geometry}
\usepackage{graphicx}
\usepackage{enumitem}
\usepackage{hyperref}
\usepackage{authblk}
\usepackage{xparse}

\input{tikzcommands.tex}

\usepackage{setspace}

\usepackage[
  backend=biber,
  style=numeric,
  maxbibnames=99,
  sorting=nyt,
  sortcites,
  doi=false
]{biblatex}

\renewbibmacro{in:}{\ifentrytype{article}{}
    {\printtext{\bibstring{in}\intitlepunct}}}

\DeclareFieldFormat[article]{pages}{#1}

\renewbibmacro*{title}{%
  \ifboolexpr{ test {\iffieldundef{title}} }
    {}
    {\iffieldundef{doi}
       {\printfield[title]{title}}
       {\printtext[title]{\href{https://doi.org/\thefield{doi}}{\thefield{title}}}}%
     \newunit}}

\newcommand{\BG}{\mathcal{B}(G)}
\newcommand{\BlG}{\mathcal{B}_{k}(G)}
\newcommand{\BuG}{\mathcal{B}_{\geq k}(G)}
\newcommand{\multBG}{\mathcal{B}^{\kern-0.25em\raisebox{-0.1ex}{\doublecurve}}\!(G)}
\newcommand{\GClaw}{G^{\kern0.05em{\claw}}}
\newcommand{\ord}[1]{\lvert #1 \rvert}

\newcommand{\comp}[1]{\overline{#1}}
\newcommand{\type}{\text{type}}
\newcommand{\adjclaw}{\kern0.1em{\claw}}
\newcommand{\lowddagger}{\raisebox{-1.5pt}{$\scriptstyle\ddagger$}}
\newcommand{\lowclaw}{\raisebox{-1.5pt}{$\scriptstyle\claw$}}
\newcommand{\compgddagger}{\comp{G}^{\raisebox{-0.3ex}{$\scriptstyle\ddagger$}}}
\newcounter{propcounter}

\newtheorem{theorem}{Theorem}[section]

\newtheorem{lemma}[theorem]{Lemma}

\newtheorem{observation}[theorem]{Observation}

\newtheorem{conjecture}[theorem]{Conjecture}

\theoremstyle{definition}
\newtheorem{definition}[theorem]{Definition}

\title{Reconstructing a graph from its Bell colouring graph}
\author{Brian Hearn}

\affil{Department of Mathematics, London School of Economics \& Political Science, Houghton Street, London WC2A 2AE, UK \linebreak
\href{mailto:b.hearn@lse.ac.uk}{\tt b.hearn@lse.ac.uk}}

\begin{document}

\maketitle

\begin{abstract}
    The \textit{Bell colouring graph}~$\BG$ of a graph~$G$ is the graph whose vertices are the partitions of the vertex set of~$G$ into independent sets, with an edge between two partitions if and only if one can be obtained from the other by changing the part of a single vertex of~$G$.
    Given a natural number~$k$, the \textit{Bell~$k$-colouring graph}~$\BlG$ and the \textit{upper Bell~$k$-colouring graph}~$\BuG$ are the induced subgraphs of~$\BG$ consisting of all partitions with at most~$k$ parts and at least~$k$ parts, respectively.

    We determine precisely when two finite graphs have isomorphic Bell colouring graphs.
    In particular, we show that every~$n$-vertex graph~$G$ with no vertices of degree~$n-1$ can be reconstructed from its Bell colouring graph~$\mathcal{B}(G)$, and from its upper-Bell colouring graph~$\BuG$ if~$k\leq n-2$.
    We also show that every~$n$-vertex graph with maximum degree~$\Delta(G)< \frac{1}{9}n-\frac{1}{3}$ can be reconstructed from its Bell~$k$-colouring graph~$\BlG$ if~$k>\chi(G)$.
    By taking graph complements, each of these results can be restated in terms of partitions into cliques.
\end{abstract}

\input{1_introduction}
\input{2_maintheorem}
\input{3_lowertheorem}
\printbibliography
\input{A_uppertheorem}
\end{document}

%% file: 1_introduction.tex
\section{Introduction and main results}\label{section:introduction}

Let~$G=(V(G),E(G))$ be a finite graph and~$k$ a natural number.
An \emph{independent set partition} of~$G$ is a partition of~$V(G)$ into independent sets.
The \emph{Bell colouring graph~$\BG$} is the graph whose vertex set is the set of independent set partitions of~$G$, with an edge between two partitions if and only if one can be obtained from the other by changing the part of a single vertex of~$G$ (either moving a vertex into a different existing part or creating a new part of size~$1$).
Observe that~$\mathcal{B}(G)$ is a finite graph.
The \emph{Bell $k$-colouring graph~$\BlG$} is the induced subgraph of~$\BG$ whose vertex set is the set of independent set partitions of~$G$ with at most~$k$ parts.
Observe that $\BG=\mathcal{B}_{\ord{V(G)}}(G)$.
We will also consider the \textit{upper Bell $k$-colouring graph}~$\BuG$, the induced subgraph of~$\BG$ whose vertex set is the set of independent set partitions of~$G$ with at least~$k$ parts.
The term Bell $k$-colouring graph, introduced in Finbow and MacGillivray~\cite{finbow2025hamiltonicity}, comes from the \textit{$k$-Bell number} of a graph~$G$, defined as the number of partitions of~$V(G)$ into at most~$k$ independent sets.
This is, in turn, a generalisation of the classical Bell number~$B_n$ of an integer~$n$, the number of partitions of the set $\{1,\ldots,n\}$.

A (proper) \emph{$k$-colouring} of~$G$ is a map from~$V(G)$ to $\{1,2,\ldots,k\}$ such that no adjacent vertices are assigned the same value.
The \textit{chromatic number}~$\chi(G)$ of~$G$ is the smallest~$k$ required for a proper $k$-colouring of~$G$.
The \emph{$k$-recolouring graph~$\mathcal{C}_k(G)$} is the graph whose vertex set is the set of~$k$-colourings of~$G$, and whose edges link pairs of $k$-colourings which differ at exactly one vertex of~$G$.

Every $k$-colouring of~$G$ partitions the vertices of~$G$ into at most~$k$ independent sets.
Therefore, independent set partitions can be thought of as `unlabelled' colourings, making Bell $k$-colouring graphs close relatives of $k$-recolouring graphs.
The latter have been well-studied; for example, results on connectivity and Hamiltonicity can be found in \cite{dyer2006randomly,cereceda2008connectedness,choo2011gray}.
Subsequently, Bell $k$-colouring graphs have been studied in their own right, for example in Haas \cite{haas2012canonical} and Finbow and MacGillivray \cite{finbow2025hamiltonicity}, the latter showing that~$\BG$ is Hamiltonian unless~$G$ is a complete graph or a complete graph minus an edge.
Surveys on \textit{combinatorial reconfiguration} problems in general include~\cite{van2013complexity,nishimura2018introduction}.

In this paper, we are interested in determining when~$G$ is uniquely determined by its Bell colouring, Bell $k$-colouring, or upper Bell $k$-colouring graphs.
We are motivated by recent similar results for $k$-recolouring graphs~\cite{hogan2024note,berthe2025determining,asgarli2025coloring}.
Our main results are as follows.

\begin{theorem}\label{thm:main}
    Let~$G$ be a graph with no vertices of degree $\ord{V(G)}-1$.
    Then~$G$ can be reconstructed from~$\BG$.
\end{theorem}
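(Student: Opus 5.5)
Throughout, let $n=\ord{V(G)}$. We reconstruct $G$ from the abstract graph $\BG$ by locating special vertices of $\BG$ and reading off structure from their neighbourhoods.

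\textbf{Step 1: identify the discrete partition.} Let $D$ be the partition into singletons. Its neighbours are obtained by merging two non-adjacent vertices $u,v$ into a part $\{u,v\}$. So $D$ has degree $\ord{E(\comp{G})}$, and its neighbours correspond bijectively to the non-edges of $G$. To identify $D$ intrinsically, I would use a structural invariant rather than degree alone. A partition $P$ with parts of sizes $a_1,\dots,a_m$ has, up to corrections from edges of $G$, neighbours obtained by moving one vertex into another part or into a new singleton part.

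I would first try the following invariant. In the neighbourhood of $D$, two neighbours $\{u,v\}$ and $\{x,y\}$ are adjacent in $\BG$ exactly when they share a vertex and the remaining pair is non-adjacent in $G$. This reflects a triangle or path structure: neighbours of $D$ sharing a vertex have a common neighbour $\{u,v,w\}$ or $D$ itself. Failing that, I would characterise $D$ as the unique vertex maximising the number of parts. Since the number of parts changes by at most one along an edge, I would aim to recover the "level" function $P\mapsto$ (number of parts) from the graph. Equivalently, I would recover the orientation in which merging decreases the level, perhaps via distances from the set of minimum-level vertices.

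The hypothesis that no vertex has degree $n-1$ must be used in this step. If $v$ is universal, $\{v\}$ is a part of every partition, so $\BG\cong\mathcal{B}(G-v)$ and $G$ cannot be recovered. Thus for every vertex $v$ there is a non-neighbour $u$, so every vertex lies in some neighbour of $D$.

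\textbf{Step 2: recover $\comp{G}$ from the neighbourhood of $D$.} Once $D$ is found, $N(D)$ is in bijection with $E(\comp{G})$. Two neighbours $\{u,v\}$ and $\{u,w\}$ (as singleton-plus-pair partitions) are adjacent in $\BG$ iff they differ by moving one vertex. Moving $v$ out of $\{u,v\}$ into $\{w\}$ gives $\{u\},\{v,w\}$, which is adjacent only when $vw\notin E(G)$. So adjacency within $N(D)$ is not simply the line graph of $\comp{G}$. Instead I would use common neighbours at distance two: $\{u,v\}$ and $\{x,y\}$ with $\{u,v\}\cap\{x,y\}=\emptyset$ have the common neighbour $\{u,v\},\{x,y\}$ (two pairs). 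Pairs sharing a vertex $u$ have a common neighbour other than $D$ only through $\{u,v,w\}$ (if $vw\notin E$) or via the move described above. Counting common neighbours therefore distinguishes disjoint pairs from intersecting pairs, so we recover the line graph $L(\comp{G})$.

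\textbf{Step 3: invert the line graph.} By Whitney's theorem, $L(\comp{G})$ determines $\comp{G}$ up to isomorphism, except for $K_3$ versus $K_{1,3}$. Since $\comp{G}$ has no isolated vertices (no universal vertices in $G$), this also recovers $n$, and hence $G$. The $K_3/K_{1,3}$ ambiguity needs separate handling: it corresponds to $G=\comp{K_3}$ versus $G=K_1\cup K_3$, and $K_1\cup K_3$ has no universal vertex. I would settle this by comparing $\ord{V(\BG)}$, namely $5$ versus $2$. The same kind of count handles small exceptional cases.

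\textbf{Main obstacle.} I expect Step 1, locating $D$ canonically in $\BG$, to be the hardest part. For this I would attempt to show that $D$ is the unique vertex whose neighbourhood induces a graph with the "line-graph-like" common-neighbour pattern above. Alternatively, I would show that the level function is recoverable, for instance by showing that the set of partitions with the fewest parts is distinguished by local degree or clique structure.
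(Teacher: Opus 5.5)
\textbf{The gap is Step~1, which is the heart of the theorem.} You never give an intrinsic way to locate $D=P^*$ in the unlabelled graph $\BG$, and the concrete suggestions you float do not work as stated.

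The number of parts is not an isomorphism invariant of $\BG$. For $G=\comp{K_3}$, which has no universal vertex, $\BG\cong K_5^-$. The automorphism swapping its two non-adjacent vertices exchanges $P^*$ with the one-part partition, so no ``level function'' can be read off.

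Nor is $D$ the unique vertex with a line-graph-like neighbourhood. Suppose $u,v$ are non-adjacent in $G$ and both are adjacent to every other vertex. Then the partition whose only non-singleton part is $\{u,v\}$ has neighbourhood consisting of $P^*$ together with the merges of the other non-edges. This is isomorphic to $\mathcal{N}(P^*)$; indeed an automorphism of $\BG$ swaps the two partitions.

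So whatever test you use must be proved to give the right reconstruction for \emph{every} vertex passing it, not to single out $P^*$. This is the missing idea, and it is exactly what the paper does:
\begin{enumerate}
\item It defines $P^*$-candidates by Properties~1--5: two local common-neighbour conditions, then lexicographic maximisation of $d(P)$, of $N_P$, and of $T_P$.
\item Via a part-by-part degree comparison with $P^*$ (Lemma~\ref{lem:props123}), it shows that any $P$ satisfying Properties~1--3 has parts of size at most $3$, with strong degree restrictions.
\item It deduces that $\psi_P:N(P)\to E(\comp{G})$ is injective and that adjacency implies incidence. Hence $\mathcal{N}(P)$ is a subgraph of $L(\comp{G})$, and equality is forced by maximality of $N_P$, since $P^*$ attains it.
\item Finally, maximality of $T_P$ makes $T_P$ equal to the number of triangles in $\comp{G}$.
\end{enumerate}
Without something of this kind, your Steps~2 and~3 have nothing to act on.

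\textbf{There are also errors downstream.}

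In Step~2, write $Q_{uv}$ for the merge of $\{u\}$ and $\{v\}$. Then $Q_{uv}$ and $Q_{uw}$ are \emph{always} adjacent: move $u$ from $\{u,v\}$ into $\{w\}$, which is legal since $uw\notin E(G)$. You moved $v$ instead, which produces $Q_{vw}$. Together with the non-adjacency of disjoint pairs, this gives $\mathcal{N}(P^*)\cong L(\comp{G})$ directly (Lemma~\ref{lem:type1neighbours}), so the common-neighbour detour is unnecessary.

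In Step~3, the $K_3$/$K_{1,3}$ ambiguity arises per component of $\comp{G}$, for example $\comp{G}=K_3+K_2$ versus $K_{1,3}+K_2$. It is not confined to $G=\comp{K_3}$ versus $K_3+K_1$, and even there the vertex counts are $5$ and $4$, not $5$ and $2$.

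Your counting idea can nonetheless be made to work in general. Cliques of $\comp{G}$ lie in single components, so $\ord{V(\BG)}$ is the product over components of $\comp{G}$ of their numbers of clique partitions. A triangle contributes $5$ and a claw $4$. Once $\compgddagger$ is known, this determines how many claw components are really triangles. That would be a legitimate and simpler substitute for the paper's $T_P$ step, but only after Step~1 has actually been carried out.
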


\begin{theorem}\label{thm:mainlower}
    Let~$G$ be a graph with maximum degree $\Delta(G)<\frac{1}{9}\ord{V(G)}-\frac{1}{3}$, and let~$k>\chi(G)$.
    Then~$G$ can be reconstructed from~$\BlG$.
\end{theorem}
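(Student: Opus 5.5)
The plan has two stages. First, recover from the abstract graph $\BlG$ the \emph{labelling} of its edges by the vertex of $G$ that moves. Second, read off adjacency in $G$ from how the moves of two different labels interact.

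\textbf{Stage one: cliques.} For a vertex $v$ and an independent set partition $Q$ of $G-v$, consider all partitions of $V(G)$ obtained by inserting $v$ into $Q$, either into a part of $Q$ containing no neighbour of $v$ or as a new singleton (only allowed if $Q$ has fewer than $k$ parts). These form a clique $K(v,Q)$ in $\BlG$, and every edge of $\BlG$ lies in exactly one such clique. I would show that, under the degree hypothesis, these are essentially the \emph{large} maximal cliques of $\BlG$, and that every other triangle can be recognised and discarded.

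\begin{itemize}
\item A triangle whose three edges move three different vertices would need three partitions pairwise differing in one vertex. One checks this forces a rigid local configuration, which can be recognised and discarded.
\item Since $\Delta(G)<\frac19\ord{V(G)}-\frac13$, a vertex has few neighbours. So for many $Q$ with close to $k$ parts, $v$ has many admissible parts and $K(v,Q)$ is large.
\item The hypothesis $k>\chi(G)$ guarantees that partitions with fewer than $k$ parts exist. These make $\BlG$ connected and supply the singleton moves needed later.
\end{itemize}

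\textbf{Stage one: gluing cliques into labels.} Next I would glue the cliques into classes, one class per vertex of $G$. Two cliques $K(v,Q)$ and $K(v,Q')$ with $Q,Q'$ differing by one move of some $u\neq v$ are joined by induced $4$-cycles $P_1P_2P_3P_4$. In such a cycle, $P_1P_2$ and $P_4P_3$ are moves of $v$, and $P_1P_4$ and $P_2P_3$ are moves of $u$, which commute. Taking the transitive closure of ``opposite sides of an induced $4$-cycle'', restricted to edges in large cliques, should give exactly $\ord{V(G)}$ classes. This recovers $n=\ord{V(G)}$ and a bijection between the classes and $V(G)$, up to relabelling. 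The sparsity bound is what I expect to use to show that induced $4$-cycles never identify two different vertices. Spurious $4$-cycles arise only when the moves interact through a common part, and the bound makes these rare relative to genuine commuting squares, so they can be filtered by a counting criterion.

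\textbf{Stage two: adjacency.} Once edges are labelled, I would decide adjacency of $u$ and $v$ as follows. The two are non-adjacent if and only if there is a partition in which moving $v$ into the part of $u$ is possible. Equivalently, there is a $v$-labelled edge $PP'$ and a $u$-labelled edge $P'P''$ such that $P\neq P''$ are not joined by a path $P,\,\cdot\,,P''$ with the labels in the other order. This happens when $v$ joins $u$'s singleton and then $u$ leaves, which produces non-commuting configurations impossible when $uv\in E(G)$. I would check both directions of this criterion directly. Adjacent vertices only ever block each other's moves, which yields commuting squares or missing moves but never this particular pattern. The condition $k>\chi(G)$ guarantees enough room for the singleton configurations.

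\textbf{Main obstacle.} I expect the hardest step to be the first stage: proving that the clique and $4$-cycle structure of $\BlG$ determines the vertex labelling uniquely. This is where the precise constant $\frac19$ should enter, via a careful count of clique sizes and of commuting versus non-commuting squares at partitions with $k$ versus fewer than $k$ parts. I would first try to isolate partitions with exactly $k$ parts, namely those where no singleton creation is possible, by their degree or clique profile.
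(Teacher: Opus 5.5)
\textbf{Stage one.} Your Stage one does not work as described.

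First, edges of $\BlG$ do not carry a unique moving vertex. Merging singletons $\{u\},\{v\}$, or splitting a part $\{u,v\}$, is at once a $u$-move and a $v$-move. Such an edge therefore lies in two cliques $K(u,\cdot)$ and $K(v,\cdot)$, so your claim that every edge lies in exactly one clique is false. This is precisely the information the Bell colouring multigraph records and $\BlG$ does not.

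More seriously, the rule ``opposite sides of an induced $4$-cycle move the same vertex'' is false. Suppose $P$ has parts $\{u,v\},\{u',v'\}$ with no edges of $G$ between them. Take $Q_1$ obtained by adding $u'$ to $\{u,v\}$, $R_1$ with parts $\{u,u'\},\{v,v'\}$, and $Q_2$ obtained by adding $u$ to $\{u',v'\}$. Then $P,Q_1,R_1,Q_2$ is an induced $4$-cycle, exactly the configuration of Lemma~\ref{lem:22edges}. Its sides move $u',v,v',u$ in cyclic order, so your closure identifies $u'$ with $v'$ and $u$ with $v$.

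Restricting to edges of large cliques does not remove this. The edge $PQ_1$ lies in the clique of all placements of $u'$ at $P$, whose size has nothing to do with the square. Since you take a transitive closure, a single unfiltered square of this kind merges two classes. The ``counting criterion'' meant to filter such squares is never specified, and the degree bound does not make them rare: every partition in $\BlG$ with two edge-free $2$-parts carries one. So neither $n$ nor the labelling is actually recovered.

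\textbf{Stage two.} Stage two fails even with a perfect labelling, because blocking by an edge is exactly what \emph{creates} your asymmetric pattern. Let $uv\in E(G)$, and let $P$ have the following parts:
\begin{itemize}
\item a part $A\ni u$ with $\ord{A}\ge 2$;
\item a part $\{v,b\}$ with $ub\notin E(G)$;
\item a further part $C$ containing no neighbour of $v$.
\end{itemize}
Moving $v$ into $C$ and then $u$ into $\{b\}$ gives a $v$-edge $PP'$ followed by a $u$-edge $P'P''$, and all three partitions have the same number of parts. Now take any $X$ obtained from $P$ by moving $u$, from which $P''$ is obtained by moving $v$. Such an $X$ must contain the part $\{u,v,b\}$, which is not independent. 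So your test declares this adjacent pair non-adjacent.

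Conversely, the configuration you give for non-adjacent pairs ($v$ joins $\{u\}$, then $u$ moves into an existing part $D$) commutes whenever $\ord{P(v)}\ge 2$: add $u$ to $D$ first, then split $v$.

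\textbf{How the paper proceeds.} The paper avoids global labelling altogether. The degree bound is used only in Lemma~\ref{lem:chihajnal}, and not in any count of squares. It produces a partition $P^\dagger$ into $\chi(G)$ parts, each of size at least $4$. At $P^\dagger$ no split or merge coincidences occur, so $\mathcal{N}(P^\dagger)$ has exactly $n$ components, one per vertex of $G$. The argument then runs as follows.
\begin{itemize}
\item Partitions maximising the number of components of their open neighbourhood are taken as reconstruction candidates.
\item Whether $k=\chi(G)+1$ or $k\ge\chi(G)+2$ is detected from the existence of double-closed pairs.
\item Adjacency of $u,v$ is read off from common neighbours outside $N[P]$ of elements of $N_u(P)$ and $N_v(P)$.
\item One then selects the largest (respectively smallest) candidate graph. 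This works because $G_{P^\dagger}\cong G$, while every other candidate graph is a subgraph (respectively supergraph) of $G$.
\end{itemize}
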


\begin{theorem}\label{thm:mainupper}
    Let~$G$ be a graph with no vertices of degree $\ord{V(G)}-1$, and let~$k\leq \ord{V(G)}-2$.
    Then~$G$ can be reconstructed from~$\BuG$.
\end{theorem}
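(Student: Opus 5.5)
We need a plan for Theorem~\ref{thm:mainupper}: reconstruct $G$ from $\BuG$ when $k\le n-2$ and $G$ has no vertex of degree $n-1$, where $n=\ord{V(G)}$. The idea is to work from the top of the poset of partitions downward. The only partition with $n$ parts is the discrete partition $D$, and it always lies in $\BuG$. A partition with $n-1$ parts has exactly one non-singleton part, which is a pair $\{u,v\}$ with $uv\notin E(G)$. So the neighbours of $D$ in $\BuG$ correspond exactly to the non-edges of $G$, that is, to the edges of the complement $\comp{G}$.

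The first step is to identify $D$ intrinsically. Since $k\le n-2$, all partitions with $n-1$ and $n-2$ parts are present in $\BuG$.
\begin{itemize}[leftmargin=*]
\item The degree of $D$ is $\ord{E(\comp{G})}$.
\item A partition $P_{uv}$ with $n-1$ parts is adjacent to $D$, to the partitions obtained by moving $u$ or $v$ into another singleton, and to those obtained by moving some other vertex $w$ into the pair. In graph terms these are the non-neighbours of $u$ other than $v$, the non-neighbours of $v$ other than $u$, and the common non-neighbours $w$ of $u$ and $v$.
\end{itemize}
I would rather not compare degrees, which is messy. Instead I would characterise the $(n-1)$- and $(n-2)$-part partitions by local structure, for instance the cliques they lie in. All partitions that differ only in where a single vertex $x$ is placed form a clique in $\BG$ (``the $x$-clique''). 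The neighbourhood of $D$, however, contains no triangles of a particular kind: two pairs $\{u,v\}$ and $\{u,w\}$ are adjacent in $\BuG$ (move $v$ to $w$), while disjoint pairs are not adjacent. So $N(D)$ induces the line graph of $\comp{G}$. The plan is to show that $D$ is the unique vertex whose neighbourhood is a line graph with this specific extension property. Failing that, I would try to show that $D$ is the unique vertex $X$ such that every maximal clique containing $X$ has a certain size profile. The cliques through $D$ are the ``star'' cliques $\{D\}\cup\{P_{xy}:y\notin N[x]\}$ for each $x$. Because no vertex has degree $n-1$, every vertex $x$ has a non-neighbour, so each of these cliques is non-trivial.

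The second step, once $D$ is found, is to reconstruct $\comp{G}$ from $N(D)$. We know $N(D)$ is isomorphic to the line graph $L(\comp{G})$, but Whitney's theorem fails for small cases such as $K_3$ versus $K_{1,3}$. To get around this, I would use second-level information. The partition with parts $\{u,v\},\{w,x\}$ (two disjoint pairs) is a common neighbour of $P_{uv}$ and $P_{wx}$ at distance $2$ from $D$. The partition with the triple $\{u,v,w\}$ is a common neighbour of $P_{uv}$, $P_{uw}$ and $P_{vw}$, and it exists only when $\{u,v,w\}$ is independent in $G$. Using these, the triangles of $L(\comp{G})$ coming from stars can be distinguished from those coming from triangles of $\comp{G}$: a triangle of $\comp{G}$ has a common neighbour at level $n-2$, whereas a star of three edges has no common neighbour at level $n-2$. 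With stars identified, the vertices of $\comp{G}$ are recovered as the maximal star-cliques. Here the hypothesis that no vertex has degree $n-1$ guarantees that every vertex of $G$ appears, apart from the degenerate case where $x$ has exactly one non-neighbour; that case is handled as a single-edge star. Taking the complement then gives $G$.

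I expect the main obstacle to be the first step: proving that $D$ is distinguished up to automorphism of $\BuG$, especially when $k=n-2$, where the graph is truncated just two levels below $D$. The approach I would try first is a counting argument on maximal cliques. Every maximal clique in $\BG$ is either an $x$-clique or a triangle coming from a three-way move structure, and I would determine the clique multiset at $D$ versus at any other partition. A useful tool is that a partition with $m$ parts lies in exactly $n$ vertex-cliques, each of size at most $m+1$ and at most $n-1$ in the truncated graph. Only at $D$ do all these cliques consist of partitions with at most one fewer part, and I would turn this into an intrinsic characterisation such as ``$D$ is the unique vertex all of whose maximal cliques pairwise intersect only in it and whose neighbourhood has no independent set exceeding a given bound.'' Small cases, for example when $\comp{G}$ is a matching, would need checking by hand.
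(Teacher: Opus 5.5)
Your proof depends entirely on Step~1, recognising the discrete partition $D=P^*$ (up to automorphism) in the unlabelled graph $\BuG$, and the proposal does not establish it. You list possible characterisations, but one (the ``specific extension property'') is never specified and the concrete ones are false:
\begin{itemize}
\item The cliques through $D$ are not only the stars $\{D\}\cup\{P_{xy}\}$. If $uvw$ is a triangle of $\comp{G}$, then $\{D,P_{uv},P_{uw},P_{vw}\}$ is a maximal $4$-clique whose edges correspond to moves of different vertices, so it lies in no $x$-clique. Likewise, a partition together with the three split neighbours of one of its $3$-element parts forms such a $4$-clique. So your proposed classification of maximal cliques, on which the counting argument rests, is wrong.
\item The $x$-star and the $y$-star through $D$ share $P_{xy}$ whenever $xy\in E(\comp{G})$. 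So ``all maximal cliques through $X$ pairwise meet only in $X$'' already fails at $X=D$.
\item ``Only at $D$ do all these cliques consist of partitions with one fewer part'' refers to numbers of parts, which are not visible in the unlabelled graph. Turning it into an intrinsic property is exactly the open problem.
\end{itemize}
The paper says explicitly that, without the edge multiplicities of the Bell colouring multigraph, it sees no way to recognise $P^*$. So this is not a routine step to be filled in later.

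The missing idea is that you do not need to find $D$ at all. The paper defines $P^*$-candidates intrinsically by Properties~1--5: two local conditions about common neighbours outside $N[P]$, followed by successively maximising $d(P)$, $N_P$ and $T_P$. It shows $P^*$ is a candidate. It then shows that \emph{every} candidate $P$, whatever it is, satisfies $\mathcal{N}(P)\cong L(\comp{G})$ and $T_P=T$, where $T$ is the number of triangles of $\comp{G}$. The work is in three places:
\begin{itemize}
\item Lemma~\ref{lem:props123}: Properties~1 and~2 force all parts to have size at most $3$, with strong edge constraints, and the $f_P$ counting gives $d(P)\le d(P^*)$.
\item Lemmas~\ref{lem:linegraph} and~\ref{lem:linegraph2}: an injection $\psi_P:N(P)\to E(\comp{G})$ under which adjacency implies incidence, so maximising $N_P$ forces $\mathcal{N}(P)\cong L(\comp{G})$.
\item Lemma~\ref{lem:P^*dist2}: at every such $P$, triangles of $\mathcal{N}(P)$ arising from claws of $\comp{G}$ have no common neighbour outside $N[P]$, so $T_P\le T$, with equality at $P^*$.
\end{itemize}
Your Step~2 is sound and is essentially the special case $P=D$ of this (Lemmas~\ref{lem:type1neighbours} and~\ref{lem:P^*dist2_2} together with Whitney's theorem). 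However, it only yields $G$ at a vertex already known to behave like $D$. Supplying such a vertex is exactly what the candidate machinery does, and it is what your proposal leaves open.
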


We will prove each of the above theorems by giving explicit algorithms for reconstructing~$G$.
In each case, it is not necessary to know the precise value of~$k$ so long as the relevant conditions are satisfied.

Call a vertex of~$G$ \textit{universal} if it has degree $\ord{V(G)}-1$.
To see why it is impossible to detect universal vertices of~$G$ from its Bell colouring graph~$\BG$, observe that such a vertex must always belong to a part of size~$1$ in any independent set partition of~$G$.
Therefore, adding a universal vertex to~$G$ does not change its Bell colouring graph (up to isomorphism).
With this in mind, Theorems~\ref{thm:main} and~\ref{thm:mainupper} are respectively special cases of the following two results.

\begin{theorem}\label{thm:maincomplete}
    Two graphs~$G_1$ and~$G_2$ have isomorphic Bell colouring graphs if and only if the graphs obtained from~$G_1$ and~$G_2$ by removing all universal vertices are isomorphic.
\end{theorem}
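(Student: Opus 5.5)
Both directions are needed, and the "if" direction is immediate from the remark preceding the statement. A universal vertex~$u$ is adjacent to every other vertex, so in every independent set partition of~$G$ it forms the singleton part~$\{u\}$. Hence the map $P\mapsto P\setminus\{\{u\}\}$ is a bijection from independent set partitions of~$G$ to those of~$G-u$. It preserves adjacency, since a single-vertex move never involves~$u$ (moving~$u$ anywhere would violate independence, or would recreate~$\{u\}$). Iterating over all universal vertices, and noting that removing universal vertices creates no new ones among the remaining vertices relative to the smaller graph except those that were already universal, we get $\mathcal{B}(G_i)\cong\mathcal{B}(G_i')$, where~$G_i'$ is~$G_i$ with all universal vertices removed. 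So $G_1'\cong G_2'$ implies $\mathcal{B}(G_1)\cong\mathcal{B}(G_2)$.

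There is one subtlety: $G_i'$ may itself contain vertices that are universal in~$G_i'$. A vertex universal in~$G'$ was adjacent to every non-universal vertex, and to all universal vertices, so it was already universal in~$G$. Hence~$G_i'$ has no universal vertices, except in the degenerate case where~$G_i$ is complete and $G_i'$ is empty.

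For the "only if" direction, suppose $\mathcal{B}(G_1)\cong\mathcal{B}(G_2)$. By the above, $\mathcal{B}(G_i')\cong\mathcal{B}(G_i)$, so $\mathcal{B}(G_1')\cong\mathcal{B}(G_2')$, and each~$G_i'$ has no vertex of degree $\ord{V(G_i')}-1$. Theorem~\ref{thm:main} says such a graph is reconstructible from its Bell colouring graph, meaning the isomorphism class of the graph is determined by the isomorphism class of its Bell colouring graph. Therefore $G_1'\cong G_2'$.

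The empty-graph edge case needs separate care. The graph with no vertices has exactly one partition, the empty one, so its Bell colouring graph is~$K_1$. Any graph with at least one non-universal vertex~$v$ has a non-adjacent vertex~$w$. It then has at least two independent set partitions: all singletons, and all singletons with~$v$ and~$w$ merged. So its Bell colouring graph has at least two vertices. Thus $\mathcal{B}(G_i)\cong K_1$ exactly when~$G_i'$ is empty, and the case where one of the~$G_i'$ is empty is settled separately. For the one-vertex graph~$K_1$, its single vertex is universal, so that case also falls under "$G'$ empty".

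The main obstacle is making sure Theorem~\ref{thm:main} applies as a genuine statement about isomorphism classes and covers every graph without universal vertices, including small ones. If the algorithmic proof of Theorem~\ref{thm:main} needs extra hypotheses for tiny cases, those cases will have to be checked by hand.
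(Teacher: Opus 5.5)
Your ``if'' direction is the paper's argument, and your side checks are correct. Indeed $\mathcal{B}(G)\cong\mathcal{B}(G')$, the graph $G'$ has no universal vertices of its own (so it never has exactly one vertex), and $\mathcal{B}(G)\cong K_1$ exactly when $G'$ is empty.

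The gap is the ``only if'' direction, which you obtain entirely by citing Theorem~\ref{thm:main}. In the paper, Theorem~\ref{thm:main} is not proved independently. It is introduced as a special case of Theorem~\ref{thm:maincomplete} and deduced from it: Lemma~\ref{lem:mainreconstruction} ``completes the proof of Theorem~\ref{thm:maincomplete}, from which Theorem~\ref{thm:main} follows immediately''. Since $\mathcal{B}(G)\cong\mathcal{B}(G')$ and $G'$ has no universal vertices, the two statements are equivalent. Your argument therefore only establishes the trivial implication from Theorem~\ref{thm:main} to Theorem~\ref{thm:maincomplete}. The whole content of the result, recovering $G'$ from the unlabelled graph $\mathcal{B}(G)$, is assumed rather than proved. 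The obstacle you flag (small cases of Theorem~\ref{thm:main}) is not the real one.

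What is missing is a way to find, inside $\mathcal{B}(G)$ alone, a vertex whose neighbourhood encodes $\comp{G}$. The natural choice is the all-singletons partition $P^*$, since $\mathcal{N}(P^*)\cong L(\comp{G})$. Without the doubled edges of the Bell colouring multigraph, however, there is no evident way to recognise $P^*$. The paper gets around this with $P^*$-candidates, defined by Properties~1--5, which can be checked by inspection. It then shows the following.

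\begin{enumerate}
\item $P^*$ has Properties~1 and~2.
\item Any vertex with Properties~1 and~2 has degree at most $d(P^*)$. Equality forces all parts to have size at most~$3$, with strong restrictions on the non-edges between parts (Lemmas~\ref{lem:nosize4}--\ref{lem:props123}).
\item For such a vertex $P$ there is an injection $\psi_P\colon N(P)\to E(\comp{G})$ under which adjacency implies incidence. Maximising the number of vertices plus edges of $\mathcal{N}(P)$ then forces $\mathcal{N}(P)\cong L(\comp{G})$.
\item Whitney's theorem recovers $\comp{G}$ up to isolated vertices and the triangle/claw ambiguity.
\item $T_P$ equals the number of triangles of $\comp{G}$ at every candidate (Lemmas~\ref{lem:P^*dist2} and~\ref{lem:P^*dist2_2}), which resolves that ambiguity.
\end{enumerate}

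Some argument of this kind is required; the reduction to Theorem~\ref{thm:main} does not by itself prove the statement.
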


\clearpage
\begin{theorem}\label{thm:mainupperhalfcomplete}
    Let~$G_1$ and~$G_2$ be graphs and let $k_i\leq \ord{V(G_i)}-2$ for $i\in \{1,2\}$.
    Then~$\mathcal{B}_{\geq k_1}(G_1)$ is isomorphic to~$\mathcal{B}_{\geq k_2}(G_2)$ if and only if the graphs obtained from~$G_1$ and~$G_2$ by removing all universal vertices are isomorphic, and either $k_i\leq \chi(G_i)$ for $i\in \{1,2\}$, or $\ord{V(G_1)}-k_1=\ord{V(G_2)}-k_2$.
\end{theorem}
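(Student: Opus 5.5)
The plan is to separate the trivial regime, where the truncation removes nothing, from the genuinely truncated one. In the truncated regime I will recover from $\mathcal{B}_{\geq k}(G)$ two things: the number $\ord{V(G)}-k$, and the graph $G$ with its universal vertices removed.

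\emph{Reductions.} A universal vertex always forms a singleton part. Deleting it therefore gives a bijection between independent set partitions that lowers the number of parts by one, lowers $\ord{V(G)}$ by one and $\chi(G)$ by one, and preserves adjacency. Hence both conditions in the statement ($k\le\chi(G)$, and the value of $\ord{V(G)}-k$) are invariant under removing universal vertices. So I may assume throughout that neither $G_i$ has a universal vertex, adjusting $k_i$ accordingly.

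\emph{The untruncated case.} Every independent set partition has at least $\chi(G)$ parts. So if $k\le\chi(G)$, then $\mathcal{B}_{\geq k}(G)=\BG$, and Theorem~\ref{thm:maincomplete} settles this case in both directions.

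\emph{The graded structure.} Write $n=\ord{V(G)}$ and let $T$ be the all-singletons partition. A single move changes the number of parts by at most one, and one can always split off a vertex into a new singleton part. Hence the distance from $T$ to a partition with $p$ parts is exactly $n-p$, both in $\BG$ and in $\mathcal{B}_{\geq k}(G)$. It follows that $\mathcal{B}_{\geq k}(G)$ is the ball of radius $n-k$ about $T$ in $\BG$. Once $T$ is located, $n-k$ is its eccentricity, and the ball equals all of $\BG$ exactly when $k\le\chi(G)$. This gives the converse direction of the theorem. If $k_1\le\chi(G_1)$ but $k_2>\chi(G_2)$, then $\mathcal{B}_{\geq k_1}(G_1)=\mathcal{B}(G_1)$ is not isomorphic to the proper ball $\mathcal{B}_{\geq k_2}(G_2)$; I would verify this by comparing invariants such as the eccentricity of $T$. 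If both $k_i>\chi(G_i)$, isomorphism forces equal radii, i.e.\ $\ord{V(G_1)}-k_1=\ord{V(G_2)}-k_2$. The forward direction is immediate from the reductions.

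\emph{Reconstructing $G$ from the first two layers.} The hypothesis $k\le n-2$ guarantees that layers $1$ and $2$ about $T$ are present.
\begin{itemize}
\item Layer $1$ consists of the partitions with a single pair part, i.e.\ the edges of $\comp{G}$.
\item Layer $2$ consists of two disjoint independent pairs, or an independent triple.
\item Two disjoint non-edges $ab,cd$ have a common layer-$2$ neighbour, namely $\{ab,cd\}$, which has exactly two layer-$1$ neighbours.
\item Two non-edges $ab,ac$ sharing a vertex have a common layer-$2$ neighbour only if $\{a,b,c\}$ is independent, and that neighbour has three layer-$1$ neighbours.
\end{itemize}
So two layer-$1$ vertices share an endpoint if and only if they have no common layer-$2$ neighbour, or their common neighbour has three layer-$1$ neighbours. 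This recovers the line graph $L(\comp{G})$.

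Since $G$ has no universal vertex, $\comp{G}$ has no isolated vertices. Whitney's theorem then determines each component of $\comp{G}$ from $L(\comp{G})$, except for the ambiguity between $K_3$ and $K_{1,3}$. I would resolve that ambiguity using the triple vertices in layer $2$: a triangle component of $\comp{G}$ produces a layer-$2$ vertex adjacent to three layer-$1$ vertices, whereas a star does not. This yields $\comp{G}$, and hence $G$.

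\emph{Main obstacle.} The hard step is identifying $T$ intrinsically inside $\mathcal{B}_{\geq k}(G)$. Unlike in $\BG$, the whole graph may now be a truncated ball, and edges within a layer exist (moving a vertex between parts of size at least two keeps the number of parts fixed). My first attempt is a degree and local-structure characterisation:
\begin{itemize}
\item $T$ has degree $\ord{E(\comp{G})}$.
\item Its neighbourhood carries the structure of $L(\comp{G})$ described above, and it has no neighbours in its own layer.
\end{itemize}
I would try to show that $T$ is the unique vertex $v$ such that every vertex of the graph lies within distance $\operatorname{ecc}(v)$ of $v$ and all edges at $v$ go to the next layer. Failing that, I would use the vertices of minimum part count (the outer boundary of the ball) to locate the centre.
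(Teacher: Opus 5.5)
\textbf{Gap: the centre $P^*$ is never located.} Several parts of your proposal are correct:
\begin{itemize}
\item the reductions and the `if' direction;
\item the description of $\BuG$ as the ball of radius $\ord{V(G)}-k$ about $P^*$;
\item the recovery of $\comp{G}$ from layers $1$ and $2$.
\end{itemize}
But the last two are usable only once $P^*$ has been located intrinsically. Your whole `only if' argument in the truncated and mixed cases depends on this: both recovering $G'$ and reading off $\ord{V(G)}-k$ as an eccentricity preserved by the isomorphism. You flag this step as the main obstacle but do not supply it, and the criteria you sketch do not work. Every vertex $v$ trivially has all vertices within distance $\operatorname{ecc}(v)$, and all its edges go to the next breadth-first layer from $v$. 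The criteria `no neighbours in its own layer' and `minimum part count' refer to part counts, which are visible only after $P^*$ is known.

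Simple invariants fail too. In $\mathcal{B}_{\geq 3}(\comp{K_5})$, the partition with a single part of size $2$ has degree $13$, whereas $d(P^*)=10$. In $\BG$, $P^*$ need not be determined even up to automorphism. If $uv$ is an isolated edge of $\comp{G}$, then $\BG$ is the Cartesian product of $\mathcal{B}(G-u-v)$ with $K_2$, and swapping the $K_2$ factor exchanges $P^*$ with the partition merging $u$ and $v$. This matters in your mixed case, where a single procedure must work both on $\BG$ and on a proper ball. The paper says explicitly that it sees no way to recognise $P^*$; this is exactly where its real work lies.

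\textbf{How the paper avoids locating $P^*$.} It defines $P^*$-candidates by conditions checkable inside $\mathcal{B}$. Properties~1 and~2 constrain the common neighbours outside $N[P]$ of non-adjacent neighbours and of triangles in $\mathcal{N}(P)$. These are followed by successive maximisation of $d(P)$, $N_P$ and $T_P$. The paper proves that $P^*$ is a candidate. It also proves that every candidate $P$ satisfies $\mathcal{N}(P)\cong L(\comp{G})$, with $T_P$ equal to the number of triangles of $\comp{G}$, even though $P$ need not equal $P^*$ and may have parts of size $2$ or $3$. Because this procedure is the same for $\BG$ and for $\BuG$ with $k\le n-2$, it also handles the mixed case.

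The condition on $k$ then needs no centre at all. Once $G_1'\cong G_2'\cong H$, removing universal vertices turns the two graphs into $\mathcal{B}_{\geq k_i'}(H)$, where $k_i'=k_i-\ord{V(G_i)}+\ord{V(H)}$. For thresholds $\chi(H)\le k\le \ord{V(H)}+1$, these upper Bell graphs of the single graph $H$ have pairwise distinct numbers of vertices. Every threshold below $\chi(H)$ gives the same graph as $\chi(H)$. Comparing vertex counts therefore replaces your eccentricity comparison.
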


In particular, if we are told~$\ord{V(G)}$, then any graph~$G$ is uniquely determined by~$\BG$, or by~$\BuG$ so long as we know that $k\leq \ord{V(G)}-2$.
Moreover, with some additional work, Theorem~\ref{thm:mainupperhalfcomplete} can be strengthened to give a complete classification of the pairs of tuples~$(G_1,k_1)$ and~$(G_2,k_2)$ such that~$\mathcal{B}_{\geq k_1}(G_1)$ is isomorphic to~$\mathcal{B}_{\geq k_2}(G_2)$.
This is Theorem~\ref{thm:mainuppercomplete} in Appendix~\ref{section:mainupper}.

Recently, the question of whether a given (unlabelled) $k$-recolouring graph~$\mathcal{C}_k(G)$ uniquely determines~$G$ has been asked and studied in \cite{asgarli2025counting,hogan2024note,berthe2025determining,asgarli2025coloring}.
Subsequently, Asgarli et al.~\cite{asgarli2025bell} showed that every tree~$T$ is uniquely determined by its (unlabelled) Bell $3$-colouring graph~$\mathcal{B}_3(T)$, and that every graph~$G$ is uniquely determined, up to any universal vertices, from its so-called (unlabelled) Bell colouring multigraph~$\multBG$, the multigraph obtained from~$\BG$ by adding multiple edges if there are multiple ways to move between two of its vertices~\cite{asgarli2025bell}.
In light of the latter result, Theorem~\ref{thm:main} shows that the multiple edges of~$\multBG$ are not necessary to determine~$G$ up to any universal vertices.
Let~$P^*$ denote the partition of~$V(G)$ into~$\ord{V(G)}$ parts, each of size~$1$.
The strategy employed in Asgarli et al.\ to reconstruct~$G$ from~$\multBG$ is to use the multiple edges in~$\multBG$ to recognise~$P^*$, from which~$G$ can be recovered by studying the structure of~$\multBG$ locally near~$P^*$.
However, without these multiple edges available to us, we do not see any way of determining~$P^*$ among the vertices of~$\BG$, and therefore require significantly more involved arguments to prove Theorem~\ref{thm:main}.
We discuss this in more detail in Section~\ref{section:main}.

Turning our attention to Theorem~\ref{thm:mainlower}, recently, Asgarli et al.~\cite{asgarli2025coloring} and Berthe et al.~\cite{berthe2025determining} independently showed that any graph is uniquely determined by its $k$-recolouring graph~$\mathcal{C}_k(G)$ so long as $k>\chi(G)$, even without knowing the precise value of~$k$, and showed that this is not true in general if $k\leq \chi(G)$.
However, the same is not true for Bell $k$-colouring graphs.
Easy examples of this fact can be found by taking any graph~$G$, and then forming a new graph~$G^+$ by adding a universal vertex.
Then, we have $\mathcal{B}_{k+1}(G^+)\cong \mathcal{B}_k(G)$ for every natural number~$k$.
(Observe that $\chi(G^+)=\chi(G)+1$.)
In an earlier version of this paper, we conjectured that all such examples are of this form, however the following small counterexamples to that conjecture were found by Rozhoň and \v{S}\'amal~\cite{XY2026}: $\mathcal{B}_2(\comp{K_3})\cong\mathcal{B}_4(K_3+K_1)\cong K_4$ and $\mathcal{B}_3(P_4)\cong \mathcal{B}_4(C_4)\cong C_4$.
Here, as usual, $K_i$,~$P_i$, and~$C_i$ refer respectively to the clique, path, and cycle on $i$ vertices,~$\comp{G}$ denotes the complement of a graph $G$, and $G_1+G_2$ denotes the disjoint union of two graphs~$G_1$ and~$G_2$.

The requirement that $k>\chi(G)$ in Theorem~\ref{thm:mainlower} is the best possible.
Indeed, if $k<\chi(G)$, then~$\BlG$ has no vertices.
If $k=\chi(G)$, Berthe et al.~\cite{berthe2025determining} gives several examples of families of graphs which are not uniquely determined by~$\mathcal{C}_{\chi(G)}(G)$, and it is straightforward to see that these graphs are not uniquely determined by~$\mathcal{B}_{\chi(G)}(G)$ either.
We restate only the simplest examples here in the context of independent set partitions: any connected bipartite graph~$G$ on at least two vertices has a unique partition into $\chi(G)=2$ independent sets, and any \textit{$k$-tree}~$T$ (i.e.\ any graph formed by starting with a $(k+1)$-clique and repeatedly adding vertices adjacent to some existing $k$-clique) has a unique partition into $\chi(T)=k+1$ independent sets.
Observe that in both examples,~$\Delta(G)$ can be made arbitrarily small relative to~$n$, showing that the requirement that $k>\chi(G)$ in Theorem~\ref{thm:mainlower} is necessary.
Two related natural questions for further research are as follows.
First, given a Bell $k$-colouring graph~$\BlG$, can we determine whether $k=\chi(G)$?
We note that the analogous question for $k$-recolouring graphs was recently answered in~\cite{asgarli2025coloring}.
Second, which graphs~$G$ are uniquely determined by their Bell $\chi(G)$-colouring graph~$\mathcal{B}_{\chi(G)}(G)$?

On the other hand, the condition that $\Delta(G)<\frac{1}{9}\ord{V(G)}-\frac{1}{3}$ in Theorem~\ref{thm:mainlower} is used to guarantee the existence of at least one partition of~$V(G)$ into~$\chi(G)$ independent sets, where each independent set has size at least~$4$.
(We do not claim that this maximum degree condition is the best possible to guarantee such a partition.)
The existence of such a partition allows us to determine~$G$ from~$\BlG$ so long as $k>\chi(G)$.
One may therefore replace the maximum degree condition in Theorem~\ref{thm:mainlower} with any condition which guarantees the existence of such a partition.
However, we believe that requiring the existence of a partition of~$V(G)$ into~$\chi(G)$ independent sets, each of size at least~$4$, is stronger than what is actually needed.
We conjecture the following.

\begin{conjecture}\label{conj:mainlower}
    Let~$G$ be a graph such that there exists a partition of $V(G)$ into~$\chi(G)$ independent sets where each such independent set has at least~$3$ vertices, and let $k>\chi(G)$.
    Then $G$ can be reconstructed from $\mathcal{B}_k(G)$.
\end{conjecture}

\begin{conjecture}\label{conj:mainlower2}
    Let~$G$ be a graph with no vertices of degree $\ord{V(G)}-1$, and let $k>\chi(G)+1$.
    Then $G$ can be reconstructed from $\mathcal{B}_k(G)$.
\end{conjecture}

We conclude this introduction by noting that every independent set partition of~$G$ is also a partition of the vertices of the complement~$\comp{G}$ into cliques.
Therefore, each theorem in this paper can be easily restated in the language of such partitions.

%% file: 2_maintheorem.tex
\section{Proof of Theorems~\ref{thm:main},~\ref{thm:mainupper},~\ref{thm:maincomplete}, and~\ref{thm:mainupperhalfcomplete}}\label{section:main}

In this section we will prove Theorems~\ref{thm:maincomplete} and~\ref{thm:mainupperhalfcomplete}, from which Theorems~\ref{thm:main} and~\ref{thm:mainupper} immediately follow.
Let~$G$ be a graph on~$n$ vertices and let~$k$ be a natural number.

We begin by defining the following terminology which will be used throughout this paper.
Let $\mathcal{B}\coloneqq \BlG$ or $\mathcal{B}\coloneqq \BuG$.
We will not distinguish between vertices of~$\mathcal{B}$ and the partitions of~$V(G)$ which they represent.

Let $P,Q\in V(\mathcal{B})$ be distinct, let $A,B\in P$ be distinct, and let $u\in A$ be given.
We say~$Q$ is obtained from~$P$ by \textit{adding}~$u$ to~$B$ if~$Q$ is obtained from~$P$ by replacing the pair $A,B\in P$ with either the pair $A\setminus\{u\},B\cup\{u\}$ (in the case where $A\neq \{u\}$) or with just $B\cup\{u\}$ (in the case where $A=\{u\}$).
We say that~$Q$ is a \textit{split neighbour} of~$P$, or the \textit{$u$-split neighbour} of~$P$, or that~$Q$ is obtained from~$P$ by \textit{splitting}~$u$, if~$Q$ is obtained from~$P$ by replacing~$A$ with $A\setminus\{u\}$ and~$\{u\}$.
(Observe that this requires that $\ord{A}\geq 2$.)
We will say that~$Q$ is obtained from~$P$ by \textit{moving}~$u$ if~$Q$ is obtained from~$P$ either by splitting~$u$ or by adding~$u$ to some part of~$P$ distinct from~$A$.
Observe that this relation is symmetric, so we may equivalently say that~$P$ is obtained from~$Q$ by moving~$u$.
Finally, if~$Q$ is obtained from~$P$ by replacing two parts $\{u\},\{v\}\in P$ with the part $\{u,v\}$, we say~$Q$ is obtained from~$P$ by \textit{merging}~$\{u\}$ and~$\{v\}$.

Given a vertex~$v\in V(G)$, we will denote the unique part of~$P$ containing~$v$ by~$P(v)$.
We will use~$N(P)$ (respectively,~$N[P]$) to denote the \textit{open neighbourhood} (respectively, \textit{closed neighbourhood}) of~$P$, the subset of $V(\mathcal{B})$ consisting of all neighbours of~$P$ (respectively, consisting of~$P$ and all neighbours of~$P$).
We will use~$\mathcal{N}(P)$ as shorthand for~$\mathcal{B}[N(P)]$, the induced subgraph of~$\mathcal{B}$ on vertex set~$N(P)$.
We will likewise use~$\mathcal{N}[P]$ as shorthand for~$\mathcal{B}[N[P]]$.

\subsection{Proof Overview}

For any graph~$G$, define~$G'$ to be the graph obtained from~$G$ by removing all of its universal vertices.
Theorem~\ref{thm:maincomplete} then claims that for any graphs~$G_1$ and~$G_2$, we have $\mathcal{B}(G_1)\cong \mathcal{B}(G_2)$ if and only if $G_1'\cong G_2'$.
The `if' direction of Theorem~\ref{thm:maincomplete} follows immediately from our observation in the introduction that adding a universal vertex to a graph~$G$ does not change its Bell colouring graph~$\BG$ (up to isomorphism).
To complete the proof of Theorem~\ref{thm:maincomplete}, it therefore remains to show that, given a Bell colouring graph~$\BG$, one can determine~$G'$.
Simultaneously, we will also show that one can determine~$G'$ from the upper Bell colouring graph~$\mathcal{B}_{\geq k}(G)$ if $k\leq n-2$.
A brief argument will then complete the proof of Theorem~\ref{thm:mainupperhalfcomplete}; this can be found in Subsection~\ref{subsect:completingupper}.

For the rest of Section~\ref{section:main}, fix a graph~$G$ on~$n$ vertices and a natural number~$k$, and let either $\mathcal{B}\coloneqq\BG$, or $\mathcal{B}\coloneqq\BuG$ with $k\leq n-1$, unless stated otherwise.

Let~$P^*$ denote the partition of the vertices of~$G$ into~$n$ parts of size~$1$.
	The strategy used in Asgarli et al.~\cite{asgarli2025bell} for determining~$G'$ from its Bell colouring multigraph~$\multBG$ has two parts.
First, they show that one can recognise~$P^*$ among the vertices of~$\multBG$ (up to automorphisms of~$\multBG$).
Next, they show that the local structure of~$\multBG$ near~$P^*$ suffices to determine~$G'$.
The existence of multiple edges in~$\multBG$ plays a crucial role in the first step.
Every neighbour of~$P^*$ is obtained from~$P^*$ by merging two singleton parts~$\{u\}$ and~$\{v\}$, hence can be obtained from~$P^*$ by moving either~$u$ or~$v$.
Therefore, in~$\multBG$, there are two edges between~$P^*$ and each of its neighbours.
This special property allows for~$P^*$ to be recognised without too much difficulty.
Indeed, Asgarli et al.\ show that any partition $P\in V(\multBG)$ with maximum degree among those with this property is equivalent to~$P^*$ up to automorphisms of~$\multBG$.

If $P^*\in V(\mathcal{B})$ can be recognised, then it is not too difficult to recover~$G'$ from~$\mathcal{B}$ (by using, for example, Lemma~\ref{lem:type1neighbours} together with Whitney's Theorem~\ref{thm:Whitney}).
However, it is not clear that there is any easy way to recognise~$P^*$ among the vertices of~$\mathcal{B}$ now that we no longer have multiple edges available to us.
Instead we will define a broader notion of a \textit{$P^*$-candidate} (Definition~\ref{defn:P^*candidate}), which can be recognised among the vertices of~$\mathcal{B}$ by direct inspection.
We will then show that there is at least one $P^*$-candidate in~$\mathcal{B}$, and give an algorithm which reconstructs~$G'$ from~$\mathcal{B}$ by using the local structure of~$\mathcal{B}$ near any $P^*$-candidate.
This will complete the proof of Theorem~\ref{thm:maincomplete}.
Some extra work in Subsection~\ref{subsect:completingupper} will then complete the proof of Theorem~\ref{thm:mainupperhalfcomplete}.

We conclude this subsection by defining this notion of a $P^*$-candidate.
Let $P\in V(\mathcal{B})$.
As usual,~$d(P)$ denotes the degree of~$P$ in~$\mathcal{B}$.
Define~$N_P$ to be the number of vertices plus the number of edges in~$\mathcal{N}(P)$.
Define~$T_P$ to be the number of triangles~$\Delta$ in~$\mathcal{N}(P)$ such that the vertices of~$\Delta$ have a common neighbour in~$\mathcal{B}$ which does not lie in~$N[P]$.

\begin{definition}\label{defn:P^*candidate}
    Call a partition $P\in V(\mathcal{B})$ a \emph{$P^*$-candidate} if all of the following properties are satisfied.
    \begin{itemize}
        \item\label{defn:prop1} \textit{Property~$1$:} Any two neighbours of~$P$ which are not adjacent to each other have exactly one common neighbour~$R$ outside of~$N[P]$, and no other neighbour of~$P$ is adjacent to~$R$.
        \item\label{defn:prop2} \textit{Property~$2$:} Let $Q_1,Q_2,Q_3\in N(P)$ be neighbours of~$P$ such that $\{Q_1,Q_2,Q_3\}$ forms a triangle in~$\mathcal{B}$, and such that $Q_1$,~$Q_2$, and~$Q_3$ have a common neighbour~$R$ outside of~$N[P]$.
        Then every other neighbour of~$P$ is adjacent to either zero or two of $Q_1$,~$Q_2$, and~$Q_3$.
        \item\label{defn:prop3} \textit{Property~$3$:} $d(P)\geq d(P')$ for every $P'\in V(\mathcal{B})$ which satisfies properties~\hyperref[defn:prop1]{$1$} and~\hyperref[defn:prop2]{$2$}.
        \item\label{defn:prop4} \textit{Property~$4$:} $N_P\geq N_{P'}$ for every $P'\in V(\mathcal{B})$ which satisfies properties~\hyperref[defn:prop1]{$1$},~\hyperref[defn:prop2]{$2$}, and~\hyperref[defn:prop3]{$3$}.
        \item\label{defn:prop5} \textit{Property~$5$:} $T_P\geq T_{P'}$ for every $P'\in V(\mathcal{B})$ which satisfies properties~\hyperref[defn:prop1]{$1$},~\hyperref[defn:prop2]{$2$},~\hyperref[defn:prop3]{$3$}, and~\hyperref[defn:prop4]{$4$}.
    \end{itemize}
\end{definition}

Observe that we can recognise the set of $P^*$-candidates among the vertices of~$\mathcal{B}$ by inspection.
We state here, without proof, that in the case where $\mathcal{B}=\mathcal{B}(G)$, it turns out that every $P\in V(\mathcal{B})$ which satisfies properties \hyperref[defn:prop1]{$1$},~\hyperref[defn:prop2]{$2$}, and~\hyperref[defn:prop3]{$3$} also satisfies properties~\hyperref[defn:prop4]{$4$} and~\hyperref[defn:prop5]{$5$}.
However, we will not need this fact in our proofs.

\subsection{The structure of partitions satisfying properties~\hyperref[defn:prop1]{$1$},~\hyperref[defn:prop2]{$2$}, and~\hyperref[defn:prop3]{$3$}}\label{subsect:mainstep1}

Our goal in this subsection is to prove Lemma~\ref{lem:props123}, which will show that~$P^*$ satisfies properties \hyperref[defn:prop1]{$1$},~\hyperref[defn:prop2]{$2$}, and~\hyperref[defn:prop3]{$3$}, and that if $P\in V(\mathcal{B})$ is a partition satisfying properties \hyperref[defn:prop1]{$1$},~\hyperref[defn:prop2]{$2$}, and~\hyperref[defn:prop3]{$3$}, then~$P$ must have a very specific form.

The following observations, stated for~$\mathcal{B}(G)$, also hold for any induced subgraph of~$\mathcal{B}(G)$.

\begin{observation}\label{obs:vopenneighbourhoodclique}
    Let $P\in V(\BG)$.
    Let~$Q_1$ and~$Q_2$ be neighbours of~$P$ in~$\BG$, and let $v\in V(G)$.
    Suppose~$Q_1$ and~$Q_2$ can each be obtained from~$P$ by moving~$v$.
    Then~$Q_2$ can be obtained from~$Q_1$ by moving~$v$.
\end{observation}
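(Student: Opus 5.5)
The plan is to argue directly from the definitions. Let $A=P(v)$ be the part of~$P$ containing~$v$. Since~$Q_1$ and~$Q_2$ are each obtained from~$P$ by moving~$v$, each~$Q_i$ arises in exactly one of two ways. Either $v$ is split off, so $A$ is replaced by $A\setminus\{v\}$ and $\{v\}$ (this needs $\ord{A}\geq 2$). Or $v$ is added to some part $B_i\in P$ with $B_i\neq A$, so $A,B_i$ are replaced by $A\setminus\{v\}$ (if nonempty) and $B_i\cup\{v\}$.

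The key step is a uniform description of~$Q_i$. Let $P_0$ be the partition of $V(G)\setminus\{v\}$ obtained from~$P$ by deleting~$v$ from~$A$ and discarding the part if it becomes empty. Then each~$Q_i$ is $P_0$ with~$v$ either placed in a new singleton part or added to one existing part of~$P_0$. The part used is $B_i$ in the adding case. It is never $A\setminus\{v\}$, since that would give back~$P$, and $Q_i\neq P$ because $Q_i$ is a neighbour of~$P$. So $Q_1$ and $Q_2$ agree on $V(G)\setminus\{v\}$: both restrict to the partition~$P_0$.

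Now $Q_1\neq Q_2$, since we may take them distinct (if they are equal there is nothing to prove, or the statement is read for distinct neighbours). Both are partitions of~$V(G)$ whose restrictions to $V(G)\setminus\{v\}$ coincide with~$P_0$, so they differ only in where~$v$ sits. In $Q_1$ the vertex~$v$ lies either in the singleton $\{v\}$ or in some part $C\cup\{v\}$ with $C\in P_0$. Passing from $Q_1$ to $Q_2$ therefore consists of removing~$v$ from its part in~$Q_1$ and then doing one of two things. It is either placed as a singleton, which is a split if its $Q_1$-part had size at least~$2$, or added to another part $C'\cup\{v\}$ of~$Q_2$ with $C'\in P_0$. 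In each case this is exactly ``moving~$v$'' in the sense defined above. Independence of the parts is automatic because $Q_2$ is already an independent set partition.

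The only mildly delicate point is the bookkeeping when parts become empty. For example, $A=\{v\}$ makes splitting impossible, and $Q_1$ having $v$ as a singleton makes the move from $Q_1$ to $Q_2$ an addition rather than a split. Checking that the definitions of adding and splitting cover every such case is routine. Since moving is defined purely in terms of the two partitions, the same argument holds in any induced subgraph of~$\BG$, provided $Q_1,Q_2$ are vertices of it.
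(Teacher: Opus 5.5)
Your argument is correct, and it is the definitional check the paper leaves implicit (the paper states this as an observation without proof): $Q_1$ and $Q_2$ both restrict to the same partition of $V(G)\setminus\{v\}$, namely $P$ with $v$ deleted, so they differ only in where $v$ sits, and each remaining case (singleton to existing part, part to singleton, part to another part) is a move of $v$. You are also right to flag that $Q_1\neq Q_2$ must be assumed, since moving is only defined between distinct partitions.
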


\begin{observation}\label{obs:identicalneighbours}
    Let $P\in V(\BG)$.
    Let~$Q_1$ and~$Q_2$ be neighbours of~$P$ in~$\BG$.
    If~$Q_1$ is obtained from~$P$ by moving some vertex~$v$ such that $Q_1(v)=Q_2(v)$, then we have $Q_1=Q_2$.
\end{observation}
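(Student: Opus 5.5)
The statement to prove is Observation~\ref{obs:identicalneighbours}: if $Q_1$ is obtained from $P$ by moving $v$ and $Q_1(v)=Q_2(v)$, then $Q_1=Q_2$. The plan is to show that $Q_1$ is completely determined by $P$ together with the single part $Q_1(v)$, and then to show that $Q_2$ must also arise from $P$ by moving $v$.

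\textbf{Step 1: $Q_1$ is determined by $P$ and $Q_1(v)$.} Since $Q_1$ is obtained from $P$ by moving $v$, every part of $P$ other than $P(v)$ and the target part is unchanged. Explicitly, $Q_1$ consists of $P(v)\setminus\{v\}$ (if nonempty), together with the part $Q_1(v)$, together with all parts of $P$ not meeting $Q_1(v)\cup P(v)$. Here $Q_1(v)$ is either $\{v\}$ (splitting) or $B\cup\{v\}$ for some $B\in P$ with $B\neq P(v)$.

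\textbf{Step 2: $Q_2$ also arises by moving $v$.} Let $S:=Q_1(v)=Q_2(v)$. Since $Q_1\neq P$, we have $S\notin P$. As $Q_2$ is adjacent to $P$, it is obtained from $P$ by moving some single vertex $w$. I will show $w=v$.

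Suppose instead $w\neq v$. Then $Q_2(v)$ is either a part of $P$ unchanged by the move, or $P(v)$ with $w$ added or removed. The first option contradicts $S\notin P$, so we compare $S$ with $P(v)$ in the second case.
\begin{itemize}
\item If $w$ is added to $P(v)$, then $S=P(v)\cup\{w\}$. From the form in Step~1, $S$ either is $\{v\}$ or contains a whole part $B\neq P(v)$ of $P$ together with $v$. But $\{v\}\neq P(v)\cup\{w\}$ since $w\neq v$. Also $P(v)\cup\{w\}$ cannot equal $B\cup\{v\}$: the element $w$ would have to lie in $B$, forcing $B=\{w\}$ and $P(v)=\{v\}$, hence $S=\{v,w\}$. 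That is the merge case, which needs separate handling.
\item If $w$ is removed from $P(v)$, then $S=P(v)\setminus\{w\}\subseteq P(v)$. This is impossible unless $S=\{v\}$ and $P(v)=\{v,w\}$, another symmetric ambiguous case.
\end{itemize}

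\textbf{The obstacle: ambiguous moves.} In the leftover cases a single transition is describable by moving either of two vertices, so these genuinely need care. In the merge case, $P$ has singletons $\{v\},\{w\}$ and both $Q_1,Q_2$ merge them, so both equal $P$ with $\{v\},\{w\}$ replaced by $\{v,w\}$; hence $Q_1=Q_2$. In the split case, $P(v)=\{v,w\}$ and $S=\{v\}$, so $Q_2$ splits $w$ off $\{v,w\}$. This gives the same partition as splitting $v$, namely $P$ with $\{v,w\}$ replaced by $\{v\},\{w\}$, which is $Q_1$ by Step~1.

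\textbf{Conclusion.} In every remaining case $Q_2$ is obtained from $P$ by moving $v$ with $Q_2(v)=S$, so Step~1 gives $Q_2=Q_1$. The main work is this bookkeeping of ambiguous moves, i.e.\ the merges and splits of two-element parts.
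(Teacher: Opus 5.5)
\textbf{Gap in the split case.} You correctly reduce to $P(v)=\{v,w\}$ and $S=\{v\}$, with $Q_2$ obtained by removing $w$ from $P(v)$. You then assert that $Q_2$ splits $w$ off $\{v,w\}$, and that step is unjustified. Removing $w$ from $P(v)$ can equally mean adding $w$ to some other part $C\in P$, which is allowed whenever $w$ has no neighbour in $C$. In that case $Q_2(v)=\{v\}=Q_1(v)$, while $Q_2(w)=C\cup\{w\}\neq\{w\}=Q_1(w)$.

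This cannot be patched, because the observation as literally stated is false. Take $G=\comp{K_3}$ on $\{v,w,c\}$ and $P=\{\{v,w\},\{c\}\}$. Let $Q_1=\{\{v\},\{w\},\{c\}\}$ be the $v$-split neighbour of $P$, and let $Q_2=\{\{v\},\{w,c\}\}$ be obtained from $P$ by adding $w$ to $\{c\}$. Both are neighbours of $P$ in $\BG$, $Q_1$ is obtained by moving $v$, and $Q_1(v)=Q_2(v)$, yet $Q_1\neq Q_2$.

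\textbf{What your argument does prove.} Step~1 and the rest of the enumeration in Step~2 are correct. Once the missing subcase is added, they establish the true dichotomy: either $Q_1=Q_2$, or $P(v)=\{v,w\}$ for some $w$, $Q_1$ is the $v$-split neighbour of $P$, and $Q_2$ is obtained from $P$ by adding $w$ to a part other than $\{v,w\}$. So $Q_1=Q_2$ follows under any extra hypothesis that excludes the second alternative, for example:
\begin{itemize}
\item $Q_2$ is also obtained from $P$ by moving $v$, in which case Step~1 alone suffices;
\item $\ord{P(v)}\neq 2$;
\item $Q_1$ is not a split neighbour of $P$.
\end{itemize}

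The paper states the observation without proof, so there is no argument to compare against. In the two places it is invoked (Lemmas~\ref{lem:type1neighbours} and~\ref{lem:doubleclosureinformation}), the surrounding information does rule out the exceptional configuration. The statement itself, however, needs the extra hypothesis.
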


\begin{lemma}\label{lem:type1neighbours}
    Let $P\in V(\mathcal{B})$.
    Then the following statements hold.
    \begin{enumerate}
        \item Let $Q\in N(P)$ be obtained from~$P$ by merging two parts $\{u\}$ and $\{v\}$.
        Let $Q'\in N(P)$ be obtained from $P$ by merging two parts $\{u'\}$ and $\{v'\}$.
        Then $Q$ and $Q'$ are adjacent in $\mathcal{B}$ if and only if $\{u,v\}$ and $\{u',v'\}$ are not disjoint.
        \item Let $Q_1,Q_2,Q_3\in N(P)$ be distinct partitions such that, for each $i\in \{1,2,3\}$, $Q_i$ is obtained from $P$ by merging two parts $\{u_i\}$ and $\{v_i\}$.
        Suppose $\{Q_1,Q_2,Q_3\}$ forms a triangle in $\mathcal{B}$, and that $Q_1$, $Q_2$, and $Q_3$ have a common neighbour $R$ outside of~$N[P]$.
        Then $\{u_1,v_1\}$, $\{u_2,v_2\}$, and $\{u_3,v_3\}$ are the three edges of a triangle in~$\comp{G}$.
    \end{enumerate}
\end{lemma}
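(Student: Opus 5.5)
For part~1, we write $Q$ as $P$ with the singletons $\{u\},\{v\}$ replaced by $\{u,v\}$, and $Q'$ as $P$ with $\{u'\},\{v'\}$ replaced by $\{u',v'\}$. Since $Q\neq Q'$ (as the statement implicitly requires), the unordered pairs differ.

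\emph{If the pairs share a vertex}, say $u=u'$ and $v\neq v'$, then $Q$ and $Q'$ agree except at $u$. Both are obtained from $P$ by moving $u$, so Observation~\ref{obs:vopenneighbourhoodclique} shows that $Q'$ is obtained from $Q$ by moving $u$. Concretely, $Q$ contains $\{u,v\}$ and $\{v'\}$, and moving $u$ into $\{v'\}$ yields $Q'$. Since $Q'$ is a vertex of $\mathcal{B}$ and $\mathcal{B}$ is an induced subgraph of $\BG$, the two are adjacent.

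\emph{If the pairs are disjoint}, then $Q$ and $Q'$ differ in two places. In $Q$ the vertices $u$ and $v$ lie together while $u'$ and $v'$ are in separate singletons; in $Q'$ it is the reverse. Moving a single vertex $w$ only changes which vertices share a part with $w$. Suppose such a move turns $Q$ into $Q'$. To separate $u$ from $v$ we need $w\in\{u,v\}$, and to join $u'$ with $v'$ we need $w\in\{u',v'\}$. These sets are disjoint, so no single move works and $Q,Q'$ are nonadjacent. This is just a comparison of the sets of pairs of vertices that share a part.

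For part~2, each $Q_i$ corresponds to the pair $e_i=\{u_i,v_i\}$. This pair is independent in $G$, since $Q_i$ is an independent set partition, so $e_i$ is an edge of $\comp{G}$. The $Q_i$ are distinct, so the $e_i$ are distinct. By part~1 the $e_i$ pairwise intersect, so three distinct pairwise-intersecting $2$-sets form either a triangle (their union has three vertices) or a star (they share a common vertex $w$). The main step is to rule out the star using the common neighbour $R\notin N[P]$.

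\emph{Ruling out the star.} Suppose $e_i=\{w,x_i\}$ with $x_1,x_2,x_3$ distinct. Then each $Q_i$ equals $P$ with $w$ moved into $\{x_i\}$. The partition $R$ is adjacent to each $Q_i$ but differs from $P$, and $R\notin N(P)$. Write the move taking $Q_1$ to $R$ as a move of some vertex $z$.

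If $z=w$, then $R$ agrees with $P$ off $w$, so $R$ is $P$ with $w$ moved. This makes $R$ either $P$ or a neighbour of $P$, a contradiction.

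If $z\neq w$, then $w$ and $x_1$ still share a part in $R$. The move of $z$ cannot be $z=x_1$ leaving $w$ behind, since then the part containing $w$ would change. Now $R$ must also be one move away from $Q_2$, in which $w$ is with $x_2$ and $x_1$ is a singleton. We compare the sets of vertex pairs sharing a part in $R$ and in $Q_2$. In $R$ the pair $\{w,x_1\}$ shares a part; in $Q_2$ the pair $\{w,x_2\}$ shares a part; and $R$ and $Q_2$ further differ around $z$. A single move must account for all of these differences. Working through the options:
\begin{itemize}
\item a move of $w$ between $Q_2$ and $R$ forces $R$ to agree with $P$ except at $w$, which is impossible as above;
\item a move of $x_1$ or $x_2$ forces $z\in\{x_1,x_2\}$ together with a specific configuration, which is then contradicted by adjacency to $Q_3$;
\item a move of any other vertex cannot both separate $w$ from $x_2$ and join $w$ with $x_1$.
\end{itemize}

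This case check on which vertex is moved is the obstacle I expect. I would organise it by noting that $R$ is at distance two from $P$, so at most two vertices change parts, and that each $Q_i$ pins down how $w$ relates to $x_i$. Every case then contradicts $R\notin N[P]$. Hence the $e_i$ form a triangle in $\comp{G}$.
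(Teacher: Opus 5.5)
Your proposal is correct in substance, though the star case in part~2 has one misjustified step. Part~1 is essentially the paper's argument. For disjoint pairs the paper observes that $Q'$ would have to be the $u$-split neighbour of $Q$, which is $P$; you instead use the invariant that a single move changes only the ``same part'' status of pairs containing the moved vertex, which is equally valid.

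In the star case, your exclusion of $z=x_1$ is justified incorrectly: the part containing $w$ changing is not itself a contradiction. The right reason is that $Q_1$ is also obtained from $P$ by moving $x_1$ (adding $x_1$ to $\{w\}$), so $z=x_1$ puts $R$ in $N[P]$ exactly as $z=w$ does.

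Your middle bullet should also be made explicit. Once $z\notin\{w,x_1\}$, the pair $\{w,x_1\}$ changes status between $Q_2$ and $R$, so the vertex $y$ moved between them lies in $\{w,x_1\}$. The case $y=w$ gives $R\in N[P]$. The case $y=x_1$ forces $R$ to be $P$ with $\{w\},\{x_1\},\{x_2\}$ replaced by $\{w,x_1,x_2\}$. Then $R$ and $Q_3$ differ in the status of the disjoint pairs $\{x_1,x_2\}$ and $\{w,x_3\}$, so they are non-adjacent by your own part-1 criterion. That case therefore contradicts adjacency to $Q_3$, not $R\notin N[P]$ as your closing sentence claims.

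Part~2 takes a different route from the paper, which avoids your case split by working at $R$. If the vertex $a$ moved between $Q_1$ and $R$ lay outside $\{w,x_1,x_2\}$, then $Q_1(a)=Q_2(a)$, and Observation~\ref{obs:identicalneighbours} applied at $R$ would give $Q_1=Q_2$. Likewise with $Q_3$, so $a\in\{w,x_1,x_2\}\cap\{w,x_1,x_3\}=\{w,x_1\}$, and Observation~\ref{obs:vopenneighbourhoodclique} then puts $R$ in $N[P]$. This uses $Q_2$ and $Q_3$ symmetrically in two lines. Your route is more elementary and self-contained. As a by-product, it shows that the triple merge $\{w,x_1,x_2\}$ is the only possible common neighbour of two claw-type neighbours outside $N[P]$, which makes clear why the third neighbour $Q_3$ is needed to reach a contradiction.
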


\begin{proof}
    We first prove the first claim.
    Suppose $\{u,v\}$ and $\{u',v'\}$ are not disjoint.
    Without loss of generality, suppose $u=u'$.
    Then $Q'$ is obtained from $Q$ by adding $u$ to $\{v'\}$.
    Now suppose $\{u,v\}$ and $\{u',v'\}$ are disjoint.
    Then, since $\{u\}$ and $\{v\}$ are parts of $Q'$, we know that $Q'$ is the $u$-split neighbour of $Q$.
    But this is a contradiction, as the $u$-split neighbour of $Q$ is $P$.

    We now prove the second claim.
    For each $i\in \{1,2,3\}$, define $e_i\coloneqq \{u_i,v_i\}$, and observe that $e_i$ is an edge of $\comp{G}$.
    Since $\{Q_1,Q_2,Q_3\}$ forms a triangle in~$\mathcal{B}$, by the first claim we know that $e_1$, $e_2$, and $e_3$ have pairwise non-empty intersections, so are the three edges of either a triangle or a claw in $\comp{G}$.
    Suppose that $e_1$, $e_2$, and $e_3$ are the three edges of a claw in~$\comp{G}$; without loss of generality suppose that $u_1=u_2=u_3$.
    Observe that $Q_1(b)=Q_2(b)$ for all $b\in V(G)\setminus\{u_1,v_1,v_2\}$, that $Q_1(c)=Q_3(c)$ for all $c\in V(G)\setminus\{u_1,v_1,v_3\}$, and that~$R$ is obtained from~$Q_1$ by moving some vertex $a\in V(G)$.
    Therefore, we have $a\in \{u_1,v_1,v_2\}\cap \{u_1,v_1,v_3\}=\{u_1,v_1\}$ by Observation~\ref{obs:identicalneighbours}.
    However, since we know that $R\notin N[P]$, we have $a\notin \{u_1,v_1\}$ by Observation \ref{obs:vopenneighbourhoodclique}, a contradiction.
    Therefore, $e_1$, $e_2$, and $e_3$ are the three edges of a triangle in $\comp{G}$.
\end{proof}

\begin{lemma}\label{lem:P^*props1and2}
    $P^*$ satisfies properties~\hyperref[defn:prop1]{$1$} and~\hyperref[defn:prop2]{$2$}.
\end{lemma}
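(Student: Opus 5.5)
The plan is to describe $N(P^*)$ explicitly and then read off both properties from the structure of partitions near $P^*$, using Lemma~\ref{lem:type1neighbours}. First note that $P^*\in V(\mathcal{B})$, since it has $n\geq k$ parts. Every neighbour of $P^*$ has $n-1$ parts, so it lies in $\mathcal{B}$ whenever $k\leq n-1$. Such a neighbour is obtained by merging $\{u\}$ and $\{v\}$ for some edge $uv$ of $\comp{G}$. Write $Q_{uv}$ for this partition; the map $uv\mapsto Q_{uv}$ is a bijection from $E(\comp{G})$ to $N(P^*)$. By Lemma~\ref{lem:type1neighbours}(1), $Q_{uv}$ and $Q_{u'v'}$ are adjacent if and only if $\{u,v\}\cap\{u',v'\}\neq\emptyset$.

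For Property~1, take non-adjacent neighbours $Q=Q_{uv}$ and $Q'=Q_{u'v'}$; by the above, $\{u,v\}$ and $\{u',v'\}$ are disjoint.

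\emph{Existence.} The partition $R$ with parts $\{u,v\}$, $\{u',v'\}$ and singletons elsewhere is an independent set partition with $n-2$ parts. It lies in $\mathcal{B}$ when $\mathcal{B}=\BG$ or $k\leq n-2$, which is where I expect the hypothesis on $k$ to be used. It is adjacent to both $Q$ and $Q'$ (split off $u'$, resp.\ $u$), and it is not in $N[P^*]$.

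\emph{Uniqueness.} Let $R$ be any common neighbour of $Q$ and $Q'$ outside $N[P^*]$. Since $R\neq P^*$, $R\notin N(P^*)$, and moving one vertex changes the number of parts by at most one, $R$ must have exactly $n-2$ parts. Hence $R$ is obtained from $Q$ by adding a singleton to another part. So $R$ has either one part of size $3$ and otherwise singletons, or two parts of size $2$ and otherwise singletons. Adjacency to $Q$ forces $R$ to contain $\{u,v\}$ or a triple containing $u,v$, and similarly for $u',v'$. A triple cannot contain both disjoint pairs, so $R$ is exactly the two-pair partition above.

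\emph{No other neighbour sees $R$.} A neighbour $Q_{xy}$ of $R$ is obtained from $R$ by moving one vertex. The only way to reach a partition with a single non-singleton part is to split a vertex out of one of the two pairs. This yields $Q$ or $Q'$, so no other neighbour of $P^*$ is adjacent to $R$.

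For Property~2, let $Q_1,Q_2,Q_3$ form a triangle in $\mathcal{N}(P^*)$ with a common neighbour outside $N[P^*]$. By Lemma~\ref{lem:type1neighbours}(2) these are $Q_{uv},Q_{vw},Q_{uw}$ for a triangle $uvw$ of $\comp{G}$. Let $Q_{xy}$ be any other neighbour, so $\{x,y\}$ is an edge of $\comp{G}$ distinct from the three triangle edges.

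\emph{Case analysis.} Both $x,y\in\{u,v,w\}$ is impossible, since $\{x,y\}$ would then be one of the triangle edges. If $\{x,y\}$ misses $\{u,v,w\}$, it meets none of the triangle edges, so by Lemma~\ref{lem:type1neighbours}(1) $Q_{xy}$ is adjacent to none of $Q_1,Q_2,Q_3$. If exactly one endpoint lies in the triangle, say $x=u$, then $\{x,y\}$ meets exactly the two triangle edges through $u$, so $Q_{xy}$ is adjacent to exactly two of them.

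The only delicate point is the uniqueness argument in Property~1, specifically ruling out the triple-part partitions and confirming that $R$ has no further neighbours in $N(P^*)$. The part-counting argument handles both.
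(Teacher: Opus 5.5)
Your proof is correct, and your treatment of Property~2 is the same as the paper's. For Property~1 you reach uniqueness by a different route. The paper uses Observation~\ref{obs:vopenneighbourhoodclique}: since $R\notin N[P^*]$, $R$ is not obtained from $Q$ by moving $u$ or $v$, so $R(u)=R(v)$, which forces $R$ to be $Q'$ with $\{u\}$ and $\{v\}$ merged. You instead count parts: $R$ has exactly $n-2$ parts, hence one triple or two pairs, and disjointness of $\{u,v\}$ and $\{u',v'\}$ rules out the triple. The two arguments are equally short; yours is more self-contained, while the paper's is the move-tracking argument it reuses in later lemmas.

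More substantively, you prove that $R$ exists, which the paper's proof omits. It starts from ``let $R\notin N[P^*]$ be a common neighbour'' and only shows there is at most one. Your remark that existence needs $\mathcal{B}=\BG$ or $k\leq n-2$ is not an artefact of your method. The section's standing hypothesis allows $k=n-1$, and then $V(\mathcal{B})=N[P^*]$, so Property~1 fails for $P^*$ whenever $\comp{G}$ has two disjoint edges. Your restriction is therefore necessary, and it is exactly the hypothesis under which the reconstruction in Lemma~\ref{lem:mainreconstruction} is carried out.
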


\begin{proof}
    Let $Q,Q'\in N(P^*)$ be non-adjacent.
    Then $Q$ is obtained from $P^*$ by merging two parts $\{u\}$ and $\{v\}$, and $Q'$ is obtained from $P^*$ by merging two parts $\{u'\}$ and $\{v'\}$.
    By Lemma~\ref{lem:type1neighbours}, we know that $\{u,v\}$ and $\{u',v'\}$ are disjoint.
    Let $R\notin N[P^*]$ be a common neighbour of~$Q$ and~$Q'$.
    Since~$Q$ is obtained from~$P^*$ by merging~$\{u\}$ and~$\{v\}$,~$R$ is not obtained from~$Q$ by moving~$u$ or~$v$ by Observation~\ref{obs:vopenneighbourhoodclique}.
    Hence $R(u)=R(v)$.
    Hence~$R$ is the partition obtained from~$Q'$ by merging~$\{u\}$ and~$\{v\}$.
    So~$Q$ and~$Q'$ have a unique common neighbour $R\notin N[P^*]$.
    Moreover, since every neighbour of~$P^*$ has exactly one part of size~$2$ and $n-2$ parts of size~$1$, any common neighbour of~$R$ and~$P^*$ is obtained from~$R$ by replacing either $\{u,v\}$ with~$\{u\}$ and~$\{v\}$ or $\{u',v'\}$ with~$\{u'\}$ and~$\{v'\}$.
    That is,~$Q$ and~$Q'$ are the only common neighbours of~$P^*$ and~$R$.
    Hence~$P^*$ satisfies property~\hyperref[defn:prop1]{$1$}.

    Let $Q_1,Q_2,Q_3\in N(P^*)$ be such that $\{Q_1,Q_2,Q_3\}$ forms a triangle in~$\mathcal{B}(G)$ with some common neighbour $R\notin N[P^*]$.
    For each $i\in \{1,2,3\}$, $Q_i$ is obtained from $P^*$ by merging two parts $\{u_i\}$ and $\{v_i\}$ to create a part $e_i\coloneqq \{u_i,v_i\}$.
    By Lemma \ref{lem:type1neighbours}, the $e_1$, $e_2$, and~$e_3$ form the three edges of a triangle in~$\comp{G}$.
    Let $Q\neq Q_1,Q_2,Q_3$ be another element of~$N(P^*)$, obtained from $P^*$ by merging two parts $\{u\}$ and $\{v\}$.
    Then $\{u,v\}$ is an edge of $\comp{G}$, so is incident with either zero or two edges of triangle with edges $e_1$, $e_2$, and $e_3$.
    Applying Lemma~\ref{lem:type1neighbours} again,~$Q$ is therefore adjacent to either zero or two of $Q_1$,~$Q_2$, and~$Q_3$.
    Hence~$P^*$ satisfies property~\hyperref[defn:prop2]{$2$}.
\end{proof}

For the rest of this subsection, let $P\in V(\mathcal{B})$ be a fixed vertex of $\mathcal{B}$.

\begin{lemma}\label{lem:nosize4}
    If~$P$ has a part of size at least~$4$, then~$P$ does not satisfy property~\hyperref[defn:prop1]{$1$}.
\end{lemma}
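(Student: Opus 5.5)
The plan is to exhibit two non-adjacent neighbours of~$P$ that have at least two common neighbours outside~$N[P]$, which directly contradicts property~\hyperref[defn:prop1]{$1$}. Let $A\in P$ with $\ord{A}\geq 4$, and pick distinct $a,b,c,d\in A$. Let~$Q_a$ and~$Q_b$ be the $a$-split and $b$-split neighbours of~$P$. Both are independent set partitions with $\ord{P}+1$ parts. If $\mathcal{B}=\BuG$, they therefore lie in~$\mathcal{B}$ because $\ord{P}\geq k$; if $\mathcal{B}=\BG$, this is immediate.

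First, $Q_a$ and~$Q_b$ are not adjacent. One has parts $A\setminus\{a\},\{a\}$ and the other has parts $A\setminus\{b\},\{b\}$, so $Q_a(a)\neq Q_b(a)$ and $Q_a(b)\neq Q_b(b)$. Moving a single vertex cannot correct both discrepancies; formally this follows from Observation~\ref{obs:identicalneighbours} applied at~$P$.

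Next, I consider two candidate common neighbours, each obtained from~$P$ by replacing~$A$ with new parts and keeping every other part of~$P$:
\begin{itemize}
    \item $R_1$, which replaces~$A$ with $A\setminus\{a,b\},\{a\},\{b\}$;
    \item $R_2$, which replaces~$A$ with $A\setminus\{a,b\},\{a,b\}$.
\end{itemize}
Both are independent set partitions, since subsets of~$A$ are independent, and $A\setminus\{a,b\}\supseteq\{c,d\}$ is non-empty. The partition~$R_1$ is obtained from~$Q_a$ by splitting~$b$ and from~$Q_b$ by splitting~$a$. The partition~$R_2$ is obtained from~$Q_a$ by adding~$b$ to~$\{a\}$ and from~$Q_b$ by adding~$a$ to~$\{b\}$. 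They have $\ord{P}+2$ and $\ord{P}+1$ parts respectively, so both lie in~$\mathcal{B}$ in either setting, and they are clearly distinct.

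It remains to check that neither~$R_1$ nor~$R_2$ lies in~$N[P]$. Each differs from~$P$ in the part of both~$a$ and~$b$, with $R_i(a)\neq P(a)$ and $R_i(b)\neq P(b)$. Since both~$a$ and~$b$ have left the part containing~$c$ and~$d$, no single move of one vertex turns~$P$ into~$R_i$.

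This is the step where $\ord{A}\geq 4$ is genuinely needed, and I expect it to be the one requiring care. If $\ord{A}=3$, then~$R_2$ would have parts $\{c\},\{a,b\}$, which is exactly the $c$-split neighbour of~$P$, so~$R_2$ would lie in~$N(P)$. With $\{c,d\}\subseteq A\setminus\{a,b\}$, this coincidence cannot occur.

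Hence~$Q_a$ and~$Q_b$ are non-adjacent neighbours of~$P$ with at least two common neighbours outside~$N[P]$. Property~\hyperref[defn:prop1]{$1$} requires exactly one, so it fails for~$P$, which proves Lemma~\ref{lem:nosize4}.
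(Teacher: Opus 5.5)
Your construction ($Q_a$, $Q_b$, $R_1$, $R_2$) is the same as the paper's. Your checks that all four partitions lie in $\mathcal{B}$ and that $R_1,R_2\notin N[P]$ are fine. The second check works because you use the right invariant: moving a vertex $x$ preserves the partition restricted to $V(G)\setminus\{x\}$, and $c,d$ supply a witness different from $x$.

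The gap is the non-adjacency of $Q_a$ and $Q_b$. Your principle, that one move ``cannot correct both discrepancies'' $Q_a(a)\neq Q_b(a)$ and $Q_a(b)\neq Q_b(b)$, is false. Moving $x$ changes the part $Q(y)$ of every vertex $y$ in the old and new parts of $x$, not only $Q(x)$. For instance, if $A=\{a,b,c\}$, then $Q_a\ni\{a\},\{b,c\}$ and $Q_b\ni\{b\},\{a,c\}$ have both discrepancies. Yet they are adjacent, via moving $c$; Lemma~\ref{lem:31edges} relies on exactly this. So your argument never uses $\ord{A}\geq 4$ and would prove a false statement. Observation~\ref{obs:identicalneighbours} applied at $P$ does not rescue it: it only yields $Q_a\neq Q_b$ and says nothing about adjacency.

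The repair is short and is the paper's argument. We have $\{a\}\in Q_a$ while $\ord{Q_b(a)}=\ord{A\setminus\{b\}}\geq 3$. So the only single move from $Q_b$ that makes $a$ a singleton is splitting $a$. That partition still contains the part $\{b\}$, whereas $Q_a(b)=A\setminus\{a\}$, so $Q_a$ and $Q_b$ are not adjacent. Alternatively, use your own invariant: if the move is of $x$, pick $y\in\{c,d\}\setminus\{x\}$. If $x\neq a$, then $a,y$ are separated in $Q_a$ but together in $Q_b$; if $x=a$, then $b,y$ are together in $Q_a$ but separated in $Q_b$.

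Either way, $\ord{A}\geq 4$ is genuinely needed for non-adjacency too, not only for $R_2\notin N[P]$ as you claimed. With this fix your proof coincides with the paper's.
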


\begin{proof}
    Let~$A$ be a part of~$P$ of size at least~$4$, and let $a_1,a_2\in A$ be distinct.
    Let~$Q_1$ and~$Q_2$ be the partitions obtained from~$P$ by splitting~$a_1$ and~$a_2$ respectively.
    If~$Q_1$ and~$Q_2$ are adjacent, then~$Q_1$ is obtained from~$Q_2$ by splitting~$a_1$ since $\{a_1\}\in Q_1$ and $\ord{Q_2(a_1)}\geq 3$.
    But then $\{a_2\}\in Q_1$, a contradiction.
    So~$Q_1$ and~$Q_2$ are not adjacent.
    Let~$R_1$ be the $a_2$-split neighbour of~$Q_1$.
    Then~$R_1$ is the $a_1$-split neighbour of~$Q_2$.
    Let~$R_2$ be obtained from~$Q_1$ by adding~$a_2$ to~$\{a_1\}$.
    Then~$R_2$ is obtained from~$Q_2$ by adding~$a_1$ to~$\{a_2\}$.
    So both~$R_1$ and~$R_2$ are neighbours of both~$Q_1$ and~$Q_2$.
    Observe that $A\setminus \{a_1,a_2\}$ is a part of both~$R_1$ and~$R_2$.
    However, since~$A$ is a part of~$P$ of size at least~$4$, we know that $A\setminus \{a_1,a_2\}$ has size at least~$2$, and so is not a part of any neighbour of~$P$.
    So neither~$R_1$ nor~$R_2$ belong to~$N[P]$.
    So~$Q_1$ and~$Q_2$ have at least two common neighbours outside of~$N[P]$.
    So~$P$ does not satisfy property~\hyperref[defn:prop1]{$1$}.
\end{proof}

\begin{lemma}\label{lem:22edges}
    If~$P$ has two distinct parts of size~$2$ with no edges of~$G$ between them, then~$P$ does not satisfy property~\hyperref[defn:prop1]{$1$}.
\end{lemma}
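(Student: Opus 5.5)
The plan is to mimic the proof of Lemma~\ref{lem:nosize4}. We exhibit two non-adjacent neighbours of~$P$ that have two distinct common neighbours outside~$N[P]$, which contradicts property~\hyperref[defn:prop1]{$1$}.

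Let $A=\{a_1,a_2\}$ and $B=\{b_1,b_2\}$ be the two parts of~$P$ of size~$2$, with no edges of~$G$ between~$A$ and~$B$. Let~$Q_1$ be the $a_1$-split neighbour of~$P$, so~$Q_1$ has parts $\{a_1\},\{a_2\},B$. Let~$Q_2$ be the $b_1$-split neighbour of~$P$, so~$Q_2$ has parts $A,\{b_1\},\{b_2\}$. All other parts of~$Q_1$ and~$Q_2$ agree with those of~$P$.

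\textbf{Step 1: $Q_1$ and~$Q_2$ are not adjacent.} Moving a single vertex changes the part of only that vertex. However, $Q_1$ and~$Q_2$ disagree on the parts containing~$a_1$ (namely $\{a_1\}$ versus~$A$) and on the parts containing~$b_1$ (namely~$B$ versus $\{b_1\}$). So they cannot be obtained from each other by one move.

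\textbf{Step 2: two common neighbours.} Take~$R_1$ to be the partition obtained from~$P$ by replacing~$A$ and~$B$ with $\{a_1\},\{a_2\},\{b_1\},\{b_2\}$. Then~$R_1$ is the $b_1$-split neighbour of~$Q_1$ and the $a_1$-split neighbour of~$Q_2$.

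Take~$R_2$ to be obtained from~$P$ by replacing~$A$ and~$B$ with $\{a_1,b_2\},\{a_2\},\{b_1\}$. This is an independent set partition precisely because $a_1b_2\notin E(G)$, which is where the hypothesis is used. Then~$R_2$ is obtained from~$Q_1$ by adding~$b_2$ to~$\{a_1\}$, and from~$Q_2$ by adding~$a_1$ to~$\{b_2\}$.

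Clearly $R_1\neq R_2$. Moreover, $R_1$ has $\ord{P}+2$ parts, while $R_2$, $Q_1$ and~$Q_2$ each have $\ord{P}+1$ parts. So when $\mathcal{B}=\BuG$, all of these partitions lie in~$\mathcal{B}$ because~$P$ does.

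\textbf{Step 3: $R_1,R_2\notin N[P]$.} Neither~$R_1$ nor~$R_2$ contains~$A$ or~$B$ as a part. Any neighbour of~$P$ is obtained by moving a single vertex, which changes the part of only that vertex. Hence such a neighbour retains at least one of~$A$ or~$B$ as a part, unless the moved vertex lies in both, which is impossible. Also $R_1,R_2\neq P$. Therefore~$Q_1$ and~$Q_2$ are non-adjacent neighbours of~$P$ with at least two common neighbours outside~$N[P]$, so~$P$ violates property~\hyperref[defn:prop1]{$1$}.

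I expect no serious obstacle. The only care needed is in Step~3, making sure that a single move from~$P$ cannot simultaneously destroy both parts~$A$ and~$B$; the argument above handles this.
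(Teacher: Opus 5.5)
Your construction is correct, but the justification in Step~3, the step you singled out as needing care, rests on a false claim. A single move can alter two parts of~$P$ at once: the one the vertex leaves and the one it joins. So a neighbour of~$P$ need not retain~$A$ or~$B$. Because there are no edges between~$A$ and~$B$, adding~$a_1$ to~$B$ is a legal move, and it gives a neighbour of~$P$ with parts $\{a_2\}$ and $\{a_1,b_1,b_2\}$, containing neither~$A$ nor~$B$. The slogan ``moving a single vertex changes the part of only that vertex'' in Step~1 is false for the same reason.

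The correct invariant is this: if~$Q$ is obtained from~$P$ by moving~$x$, then~$P$ and~$Q$ induce the same partition of $V(G)\setminus\{x\}$. Both~$R_1$ and~$R_2$ separate~$a_1$ from~$a_2$ and~$b_1$ from~$b_2$, whereas~$P$ keeps both pairs together. So a single move from~$P$ to~$R_i$ would have to move a vertex lying in $A\cap B=\emptyset$. The same argument applied to~$Q_1$ and~$Q_2$ repairs Step~1, since they differ on whether $a_1,a_2$ share a part and on whether $b_1,b_2$ share a part. With this fix your proof is complete.

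Your route also differs from the paper's. The paper uses the neighbours obtained by adding~$b_1$ to~$A$ and adding~$a_1$ to~$B$. Their two common neighbours outside $N[P]$ are the partitions pairing~$A$ with~$B$, with parts $\{a_1,b_1\},\{a_2,b_2\}$ and $\{a_1,b_2\},\{a_2,b_1\}$ respectively. Every partition there has exactly~$\ord{P}$ parts, but the construction needs all four non-edges between~$A$ and~$B$.

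Your split-based construction increases the number of parts, which is harmless since~$\mathcal{B}$ is~$\BG$ or~$\BuG$ here. It also uses only the single non-edge~$a_1b_2$. So you in fact show that one non-edge between two parts of size~$2$ already violates property~$1$. That stronger statement would give the third bullet of Lemma~\ref{lem:props123} directly, without relying on the strictness of~\eqref{eqn:countingsize2and3} for pairs of size-$2$ parts with both edges and non-edges between them.
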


\begin{proof}
    Let $\{u,v\}$ and $\{u',v'\}$ be two parts of~$P$ of size~$2$ with no edges of $G$ between them.
    We will derive a contradiction by writing down partitions $Q_1,Q_2,R_1,R_2\in V(\mathcal{B})$, which will be related to each other in $\mathcal{B}$ as illustrated in Figure~\ref{fig:22edges}.

    Let~$Q_1$ and~$Q_2$ respectively be the neighbours of~$P$ obtained from~$P$ by adding~$u'$ to $\{u,v\}$ and by adding~$u$ to $\{u',v'\}$.
    Observe that we have both $Q_1(u)=Q_1(v)$ and $Q_2(u)\neq Q_2(v)$.
    But~$Q_1$ is not obtained from~$Q_2$ by adding~$u$ to~$Q_2(v)$ nor by adding~$v$ to~$Q_2(u)$.
    Therefore,~$Q_1$ and~$Q_2$ are not adjacent in~$\mathcal{B}$.
    Let~$R_1$ be the partition obtained from~$Q_1$ by adding~$v$ to~$\{v'\}$.
    Then~$R_1$ is obtained from~$Q_2$ by adding~$v'$ to~$\{v\}$.
    Let~$R_2$ be the partition obtained from~$Q_1$ by adding~$u$ to~$\{v'\}$.
    Then~$R_2$ is obtained from~$Q_2$ by adding~$u'$ to~$\{v\}$.
    And $R_1,R_2\notin N[P]$.
    So~$Q_1$ and~$Q_2$ have two common neighbours outside of~$N[P]$.
    So~$P$ does not satisfy property~\hyperref[defn:prop1]{$1$}.
\end{proof}

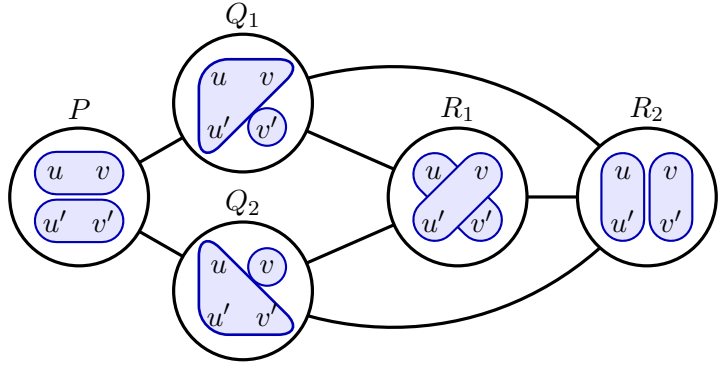
\begin{figure}[ht]
    \centering
    \begin{tikzpicture}[dot/.style={circle, fill, inner sep=2.5pt}, scale=0.142, xscale=1]
    \def\capsuleexpandfactor{1.1}
    \def\circleexpandfactor{1.7}
    \def\triangleexpandfactor{2.35}
    \def\bigcircleradius{5.5cm}
    \def\smallradius{2.7}

    \begin{scope}[]
        \coordinate (A) at (0,0);
        \coordinate (B) at (30:15cm);
        \coordinate (C) at (-30:15cm);
        \coordinate (D) at (0:30cm);
        \coordinate (E) at (0:45cm);

        \draw[very thick] (A) -- (B);
        \draw[very thick] (A) -- (C);
        \draw[very thick] (D) -- (B);
        \draw[very thick] (D) -- (C);
        \draw[very thick] (D) -- (E);
        \draw[very thick, bend right=40] (E) to (B);
        \draw[very thick, bend left=40] (E) to (C);
    \end{scope}

    \filldraw[very thick, fill = white](0,0) circle [radius=\bigcircleradius];
        \coordinate (v) at (45:\smallradius);
        \coordinate (u) at (135:\smallradius);
        \coordinate (u') at (225:\smallradius);
        \coordinate (v') at (315:\smallradius);
    \drawlabelledcapsule{u}{v}{u}{v}{\capsuleexpandfactor}
    \drawlabelledcapsule{u'}{v'}{u'}{v'}{\capsuleexpandfactor}
    \node at (0,7) {$P$};

    \begin{scope}[shift={(30:15cm)}]
        \filldraw[very thick, fill = white](0,0) circle [radius=\bigcircleradius];
        \coordinate (v) at (45:\smallradius);
        \coordinate (u) at (135:\smallradius);
        \coordinate (u') at (225:\smallradius);
        \coordinate (v') at (315:\smallradius);
        \drawlabelledroundedtriangle[\triangleexpandfactor]{u}{v}{u'}{u}{v}{u'}
        \drawlabelledcircle[\circleexpandfactor]{v'}{v'}
        \node at (0,7) {$Q_1$};
    \end{scope}

    \begin{scope}[shift={(-30:15cm)}]
        \filldraw[very thick, fill = white](0,0) circle [radius=\bigcircleradius];
        \coordinate (v) at (45:\smallradius);
        \coordinate (u) at (135:\smallradius);
        \coordinate (u') at (225:\smallradius);
        \coordinate (v') at (315:\smallradius);
        \drawlabelledroundedtriangle[\triangleexpandfactor]{u}{v'}{u'}{u}{v'}{u'}
        \drawlabelledcircle[\circleexpandfactor]{v}{v}
        \node at (0,7) {$Q_2$};
    \end{scope}

    \begin{scope}[shift={(0:30cm)}]
        \filldraw[very thick, fill = white](0,0) circle [radius=\bigcircleradius];
        \coordinate (v) at (45:\smallradius);
        \coordinate (u) at (135:\smallradius);
        \coordinate (u') at (225:\smallradius);
        \coordinate (v') at (315:\smallradius);
        \drawlabelledcapsule{u}{v'}{u}{v'}{\capsuleexpandfactor}
        \drawlabelledcapsule{u'}{v}{u'}{v}{\capsuleexpandfactor}
        \node at (0,7) {$R_1$};
    \end{scope}

    \begin{scope}[shift={(0:45cm)}]
        \filldraw[very thick, fill = white](0,0) circle [radius=\bigcircleradius];
        \coordinate (v) at (45:\smallradius);
        \coordinate (u) at (135:\smallradius);
        \coordinate (u') at (225:\smallradius);
        \coordinate (v') at (315:\smallradius);
        \drawlabelledcapsule{u}{u'}{u}{u'}{\capsuleexpandfactor}
        \drawlabelledcapsule{v}{v'}{v}{v'}{\capsuleexpandfactor}
        \node at (0,7) {$R_2$};
    \end{scope}
    \end{tikzpicture}
    \caption{The situation in the proof of Lemma~\ref{lem:22edges}.}
    \label{fig:22edges}
\end{figure}

\begin{lemma}\label{lem:21edges}
    If~$P$ has a part of size~$2$ and a part of size~$1$ with no edges of~$G$ between them, then either~$P$ does not satisfy property~\hyperref[defn:prop1]{$1$}, or $\mathcal{B}=\BuG$ with $\ord{P}=k$.
\end{lemma}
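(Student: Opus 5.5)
Following the template of Lemma~\ref{lem:22edges}, we exhibit two non-adjacent neighbours of~$P$ with two common neighbours outside~$N[P]$. Let $\{u,v\}$ be the part of size~$2$ and $\{w\}$ the part of size~$1$, with no edges of~$G$ among $u,v,w$.

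\emph{Choice of $Q_1,Q_2$.} Let $Q_1$ be obtained from~$P$ by adding~$w$ to $\{u,v\}$, so $Q_1$ has the part $\{u,v,w\}$ and $\ord{Q_1}=\ord{P}-1$. Let $Q_2$ be obtained from~$P$ by adding~$u$ to $\{w\}$, so $Q_2$ has parts $\{v\}$ and $\{u,w\}$ and $\ord{Q_2}=\ord{P}$. Both are independent set partitions because $u,v,w$ are pairwise non-adjacent.

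\emph{$Q_1$ and $Q_2$ are non-adjacent.} In $Q_1$ all three of $u,v,w$ lie in one part. In $Q_2$ they lie in two parts, $\{v\}$ and $\{u,w\}$. A single move from $Q_2$ producing $\{u,v,w\}$ would have to add $v$ to $\{u,w\}$, and this yields the partition obtained from~$P$ by adding~$w$ to $\{u,v\}$. That is exactly $Q_1$, so they may in fact be adjacent, and this pair fails.

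\emph{Second attempt.} Take instead $Q_1$ obtained by adding~$w$ to $\{u,v\}$, and $Q_2$ the $u$-split neighbour of~$P$, with parts $\{u\},\{v\},\{w\}$. These are non-adjacent: going from $Q_2$ to $Q_1$ requires merging three singletons, which takes two moves. Their common neighbours are then the partitions with one of the parts $\{u,v\}$, $\{u,w\}$, $\{v,w\}$, keeping all other parts of~$P$ unchanged.
\begin{itemize}
\item The partition with part $\{u,v\}$ is $P$ itself.
\item The partition $R_1$ with parts $\{u,w\}$ and $\{v\}$ is obtained from~$P$ by moving~$u$ into $\{w\}$, so $R_1\in N(P)$.
\item The partition $R_2$ with parts $\{v,w\}$ and $\{u\}$ is likewise in~$N(P)$.
\end{itemize}
So this pair fails too.

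\emph{Third attempt, which I expect to be the main obstacle.} Let $Q_1$ be the $u$-split neighbour of~$P$, and $Q_2$ the neighbour obtained by adding~$v$ to $\{w\}$, which has parts $\{u\}$ and $\{v,w\}$. These coincide with $R_2$ above, so they are adjacent. The three vertices $u,v,w$ alone seem insufficient to produce the configuration, which suggests using the size constraint directly.

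Since every edge of~$\mathcal{B}$ changes the number of parts by at most one, the natural mechanism is Property~$1$ applied to a non-adjacent pair whose common neighbours include a partition with one more part. The plan is:
\begin{itemize}
\item Take $Q_1$ to be the $u$-split neighbour of~$P$, with parts $\{u\},\{v\},\{w\}$ and $\ord{Q_1}=\ord{P}+1$.
\item Take $Q_2$ obtained by adding~$w$ to $\{u,v\}$, with $\ord{Q_2}=\ord{P}-1$.
\item These are non-adjacent, since their part counts differ by two. This forces $\ord{P}-1\ge k$, i.e.\ $\ord{P}>k$, in the case $\mathcal{B}=\BuG$; this is precisely where the exceptional case $\ord{P}=k$ arises.
\item Their common neighbours have exactly $\ord{P}$ parts and contain one of the pairs $\{u,v\}$, $\{u,w\}$, $\{v,w\}$.
\end{itemize}
The partitions with parts $\{u,w\},\{v\}$ and with parts $\{v,w\},\{u\}$ are both in~$N(P)$, so Property~$1$ is not directly violated and the outside-neighbour count must be examined more carefully. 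I expect the real argument to show that Property~$1$ fails because some common neighbour~$R$ outside~$N[P]$ is adjacent to a third neighbour of~$P$. The first candidate to try is the pair $Q_1$ (the $u$-split neighbour) and $Q_3$ (the $w$-into-$\{u,v\}$ neighbour), locating their unique outside common neighbour if it exists, or otherwise producing the violation of the ``no other neighbour of~$P$'' clause. Settling this configuration is the step I expect to be hardest.
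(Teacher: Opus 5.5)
\textbf{Gap: a misreading of Property~1, not a missing construction.} Your proposal never reaches a contradiction, even though it already contains the whole proof. In your second attempt, and again in the plan of your third, you take the right pair of neighbours of $P$:
\begin{itemize}
\item the $u$-split neighbour, with parts $\{u\},\{v\},\{w\}$;
\item the neighbour obtained by adding $w$ to $\{u,v\}$.
\end{itemize}
You correctly show they are non-adjacent. You also show that every common neighbour of them is one of three partitions: $P$ itself, the partition obtained from $P$ by adding $u$ to $\{w\}$, or the one obtained by adding $v$ to $\{w\}$. All three lie in $N[P]$. You then say ``this pair fails'' and ``Property~1 is not directly violated.'' But Property~1 requires that any two non-adjacent neighbours of $P$ have \emph{exactly one} common neighbour outside $N[P]$. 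Having none at all is already a violation. So your computation finishes the proof as it stands, and it is precisely the paper's argument. The search you propose at the end, for an outside common neighbour adjacent to a third neighbour of $P$, is unnecessary, and it cannot succeed because no outside common neighbour exists.

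When you write this up, fix two further points.
\begin{itemize}
\item The neighbour obtained by adding $w$ to $\{u,v\}$ has $\ord{P}-1$ parts. It is a vertex of $\mathcal{B}$ only if $\mathcal{B}=\mathcal{B}(G)$ or $\ord{P}>k$, and this is exactly where the exceptional case $\mathcal{B}=\mathcal{B}_{\geq k}(G)$ with $\ord{P}=k$ enters. State this assumption for this pair, not only in the third attempt.
\item Justify your description of the common neighbours $R$, which you only assert. Since $R$ is adjacent to the split neighbour, where $u,v,w$ lie in three distinct parts, no part of $R$ contains all three. Since $R$ is adjacent to the other neighbour, where $u,v,w$ share a part, some part of $R$ contains two of them. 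Hence $R$ is obtained from the split neighbour by merging two of $\{u\},\{v\},\{w\}$, with all other parts unchanged.
\end{itemize}
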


\begin{proof}
    Suppose that either $\mathcal{B}= \mathcal{B}(G)$, or $\mathcal{B}=\BuG$ with $\ord{P}>k$.
    Let $\{u,v\}$ and~$\{w\}$ be two parts of~$P$ with no edge between them.
    Let~$Q_1$ be the $u$-split neighbour of~$P$.
    Let~$Q_2$ be the partition obtained from~$P$ by adding~$w$ to $\{u,v\}$.
    Then~$Q_1$ and~$Q_2$ are neighbours of~$P$, where~$Q_1$ and~$Q_2$ are not adjacent since $u$,~$v$, and~$w$ each belong to distinct parts of~$Q_1$, but all belong to the same part of~$Q_2$.
    Let~$R$ be a common neighbour of~$Q_1$ and~$Q_2$.
    Then, since~$R$ is adjacent to~$Q_1$, we know that $u$,~$v$, and~$w$ do not all belong to the same part of~$R$.
    But since~$R$ is adjacent to~$Q_2$, at least two vertices in $\{u,v,w\}$ belong to the same part of~$R$.
    Hence~$R$ is obtained from~$Q_1$ by merging two of $\{u\}$,~$\{v\}$, and~$\{w\}$.
    So either $R=P$, or~$R$ is obtained from~$P$ by adding either~$u$ or~$v$ to~$\{w\}$.
    So every common neighbour of~$Q_1$ and~$Q_2$ lies in~$N[P]$.
    So~$P$ does not satisfy property~\hyperref[defn:prop1]{$1$}.
\end{proof}

\begin{lemma}\label{lem:31edges}
    If~$P$ has a part of size~$3$ and a part of size~$1$ with at least one non-edge of~$G$ between them, then~$P$ does not satisfy property~\hyperref[defn:prop2]{$2$}.
\end{lemma}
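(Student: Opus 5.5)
The plan is to exhibit a triangle in $\mathcal{N}(P)$ whose three vertices have a common neighbour outside $N[P]$, together with a fourth neighbour of~$P$ that is adjacent to exactly one of the three. This directly violates property~\hyperref[defn:prop2]{$2$}.

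Let $\{a,b,c\}$ and $\{w\}$ be the two parts of~$P$, labelled so that $aw\notin E(G)$. For $x\in\{a,b,c\}$, let $Q_x$ be the $x$-split neighbour of~$P$. Let~$Q$ be the neighbour of~$P$ obtained by adding~$a$ to~$\{w\}$; this is a valid partition because $aw$ is a non-edge. Let~$R$ be the partition obtained from~$P$ by replacing $\{a,b,c\}$ with the three singletons $\{a\},\{b\},\{c\}$.

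I will check the following adjacencies in turn.
\begin{enumerate}
    \item The $Q_x$ are pairwise adjacent. For example, $Q_b$ is obtained from~$Q_a$ by adding~$c$ to~$\{a\}$, and the other pairs are symmetric.
    \item $R$ is adjacent to each~$Q_x$, since $Q_x$ is obtained from~$R$ by merging the two singletons in $\{a,b,c\}\setminus\{x\}$.
    \item $R\notin N[P]$. Indeed $R\neq P$, and every neighbour of~$P$ differs from~$P$ in the part of exactly one vertex, whereas~$R$ requires changing the parts of two vertices of $\{a,b,c\}$ relative to~$P$.
    \item $Q$ is adjacent to~$Q_a$. Both are obtained from~$P$ by moving~$a$, so this follows from Observation~\ref{obs:vopenneighbourhoodclique}.
    \item $Q$ is not adjacent to~$Q_b$ or~$Q_c$. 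Compare~$Q$, which has parts $\{b,c\}$ and $\{a,w\}$, with~$Q_b$, which has parts $\{b\}$, $\{a,c\}$ and $\{w\}$. Both~$c$ and~$w$ lie in different parts relative to the remaining vertices, so no single move transforms one into the other. Alternatively, one can argue directly: a single move from~$Q$ that separates~$b$ from~$c$ must move~$b$ or~$c$, and it then cannot also separate~$a$ from~$w$. The case of~$Q_c$ is identical.
    \item $Q$ is distinct from~$Q_a$,~$Q_b$ and~$Q_c$, which is clear from its parts.
\end{enumerate}
Hence~$Q$ is adjacent to exactly one of $Q_a,Q_b,Q_c$, so property~\hyperref[defn:prop2]{$2$} fails.

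It remains to confirm that all these partitions are vertices of~$\mathcal{B}$, including in the case $\mathcal{B}=\BuG$. The partitions $Q_a,Q_b,Q_c$ have $\ord{P}+1$ parts, $R$ has $\ord{P}+2$ parts, and $Q$ has $\ord{P}$ parts. All of these are at least~$\ord{P}$, which is at least~$k$ whenever $P\in V(\BuG)$. All of them are independent set partitions, since each part is a subset of a part of~$P$, or is $\{a,w\}$ with $aw$ a non-edge.

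I expect the only delicate step to be the non-adjacency in item~5. I will argue it carefully through the positions of~$c$ and~$w$ rather than by inspection. Apart from that, the argument is a direct construction.
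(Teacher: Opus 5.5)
Your proposal is correct and is essentially the paper's proof: the same triangle of split neighbours $Q_a,Q_b,Q_c$, the same common neighbour $R$ outside $N[P]$ (all three vertices split), and the same fourth neighbour $Q$ obtained by adding $a$ to $\{w\}$, which is adjacent to $Q_a$ only. Your explicit check that every constructed partition lies in $\mathcal{B}_{\geq k}(G)$ is a detail the paper leaves implicit. Your move-based argument for the non-adjacency of $Q$ with $Q_b$ and $Q_c$ replaces the paper's argument via the sizes of the parts containing $a$ and $c$.
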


\begin{proof}
    Let $\{u_1,u_2,u_3\}\in P$ and $\{v\}\in P$ be parts of $P$ such that $u_1v\notin E(G)$.
    For each $i\in \{1,2,3\}$, let~$Q_i$ be the $u_i$-split neighbour of~$P$.
    Let~$R$ be the partition obtained from~$P$ by replacing $\{u_1,u_2,u_3\}$ with $\{u_1\}$,~$\{u_2\}$, and~$\{u_3\}$.
    Observe that for each distinct $i,j,k\in \{1,2,3\}$,~$Q_i$ is obtained from~$Q_j$ by adding~$u_k$ to~$\{u_j\}$.
    Moreover,~$R$ is the $u_1$-split neighbour of~$Q_2$ and~$Q_3$, and is the $u_2$-split neighbour of~$Q_1$.
    Therefore, $\{Q_1,Q_2,Q_3,R\}$ forms a clique in~$\mathcal{B}(G)$.
    Moreover, we have $Q_1,Q_2,Q_3\in N(P)$ and $R\notin N[P]$, the latter since $u_1$,~$u_2$, and~$u_3$ each belong to distinct parts of~$R$ but the same part of~$P$.
    Let~$Q'$ be the partition obtained from~$P$ by adding~$u_1$ to~$\{v\}$.
    Then we have $Q'\in N(P)$, and~$Q_1$ is the $u_1$-split neighbour of~$Q'$.
    But~$Q'$ is not adjacent to~$Q_\ell$ for $\ell\in\{2,3\}$, since, letting $m=3$ if $\ell=2$, and $m=2$ if $\ell=3$, we have $\{u_1,u_m\}\in Q_\ell$ while~$u_1$ and~$u_m$ belong to distinct parts of size~$2$ in~$Q'$.
\end{proof}

\begin{lemma}\label{lem:degreen-2}
    If~$P$ has a part $\{u,v\}$ of size~$2$ and a part~$\{w\}$ of size~$1$ with exactly one edge of~$G$ between them, then either~$w$ has degree $n-2$, or~$P$ does not satisfy property~\hyperref[defn:prop1]{$1$}, or~$P$ does not satisfy property~\hyperref[defn:prop2]{$2$}.
\end{lemma}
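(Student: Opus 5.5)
Throughout, let $\{u,v\}$ and $\{w\}$ be the two parts, with $uw\in E(G)$ and $vw\notin E(G)$. Suppose $w$ does not have degree $n-2$. Since $w$ has exactly one non-neighbour among $u,v$, it must have a further non-neighbour $x\notin\{u,v,w\}$. The plan is to show that $P$ fails property~\hyperref[defn:prop1]{$1$} or property~\hyperref[defn:prop2]{$2$}, splitting into cases according to the size of the part $C\coloneqq P(x)$. If $\ord{C}\geq 4$, Lemma~\ref{lem:nosize4} already shows that property~\hyperref[defn:prop1]{$1$} fails. If $\ord{C}=3$, then $C$ and $\{w\}$ have the non-edge $xw$ between them, so Lemma~\ref{lem:31edges} shows that property~\hyperref[defn:prop2]{$2$} fails. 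It therefore remains to treat $\ord{C}=1$ and $\ord{C}=2$.

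\emph{Case $C=\{x\}$.} Let $Q_1$ be obtained from $P$ by adding $w$ to $\{x\}$, so $Q_1=\ldots,\{u,v\},\{w,x\}$. Let $Q_2$ be obtained from $P$ by adding $v$ to $\{w\}$, so $Q_2=\ldots,\{u\},\{v,w\},\{x\}$. These are non-adjacent, since they differ in the parts of both $v$ and $x$ relative to each other. I will list all common neighbours of $Q_1$ and $Q_2$ by checking which single moves of $Q_1$ can reach a single move of $Q_2$. The candidates should be $P$ itself, the partition $S=\ldots,\{u\},\{v\},\{w,x\}$, and, only when $vx\notin E(G)$, the partition $R=\ldots,\{u\},\{v,w,x\}$. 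Note that $S\in N(P)$.
\begin{itemize}
\item If $vx\in E(G)$, every common neighbour lies in $N[P]$, so property~\hyperref[defn:prop1]{$1$} fails.
\item If $vx\notin E(G)$, then $R\notin N[P]$ is the unique outside common neighbour. But $S$ is a neighbour of $P$ other than $Q_1,Q_2$, and $S$ is adjacent to $R$ by moving $v$. So property~\hyperref[defn:prop1]{$1$} fails again.
\end{itemize}

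\emph{Case $C=\{x,y\}$.} If there are no edges of $G$ between $\{u,v\}$ and $\{x,y\}$, Lemma~\ref{lem:22edges} finishes. Otherwise I will use the pair $Q_1=\ldots,\{u,v\},\{y\},\{w,x\}$ (add $x$ to $\{w\}$) and $Q_2$, the $v$-split neighbour of $P$. These are non-adjacent, and their common neighbours should be only $P$ and $R=\ldots,\{u\},\{v\},\{w,x\},\{y\}$, which lies outside $N[P]$. So this pair alone does not violate property~\hyperref[defn:prop1]{$1$}. I would then test the other neighbours of $P$ against $R$: for instance the $x$-split neighbour and the neighbour obtained by adding $v$ to $\{w\}$, and more generally neighbours obtained by moving $y$ or $u$. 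The aim is to find one adjacent to $R$, which breaks property~\hyperref[defn:prop1]{$1$}. If that fails, I would try other non-adjacent pairs that involve $y$ and the edge between $\{u,v\}$ and $\{x,y\}$, looking for either zero or two outside common neighbours.

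The fallback is property~\hyperref[defn:prop2]{$2$}. I would look for a triangle in $N(P)$ among split and add neighbours around $\{w,x\}$ that has a common neighbour outside $N[P]$, together with some other neighbour of $P$ adjacent to exactly one of its vertices, in the style of Lemma~\ref{lem:31edges}. This size-$2$ case, which needs a careful sub-case analysis on which of the edges $ux,uy,vx,vy$ are present, is where I expect the main difficulty.
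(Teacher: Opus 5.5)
Your skeleton matches the paper's: pick a second non-neighbour $x$ of $w$, split into cases on $\ord{P(x)}$, and use Lemmas~\ref{lem:nosize4} and~\ref{lem:31edges} for sizes $\geq 4$ and $3$. However, neither of the two remaining cases is established.

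\emph{Case $P(x)=\{x\}$.} Your list $P,S,R$ of common neighbours is correct, but the claim $S\in N(P)$ is false. Passing from $P$ to $S=\ldots,\{u\},\{v\},\{w,x\}$ requires both separating $u$ from $v$ and joining $w$ to $x$, so $S\notin N[P]$. Hence both bullets are argued incorrectly: for $vx\in E(G)$ not every common neighbour lies in $N[P]$, and for $vx\notin E(G)$, $R$ is not the unique outside one. The conclusion can be rescued. If $vx\notin E(G)$, then $S$ and $R$ are two outside common neighbours. If $vx\in E(G)$, then $S$ is the only one, but the $u$-split neighbour $\ldots,\{u\},\{v\},\{w\},\{x\}$ of $P$ is adjacent to it. That is essentially the paper's argument: it pairs the $u$-split neighbour with your $Q_1$ and notes that their common outside neighbour $S$ is adjacent to your $Q_2$, so no split on $vx$ is needed.

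A side remark that applies equally to the paper: your $Q_1$, like the paper's $Q_5$, has $\ord{P}-1$ parts, so it is absent from $\BuG$ when $\ord{P}=k$. There the statement genuinely needs the exception of Lemma~\ref{lem:21edges}. Take $G$ on $\{u,v,w,x\}$ with edges $uw,ux,vx$, $k=3$ and $P=\{\{u,v\},\{w\},\{x\}\}$. Then $\mathcal{N}(P)$ is a single edge, so properties~1 and~2 hold vacuously, while $w$ has degree $1$. This is harmless for Lemma~\ref{lem:props123}, which allows that exception.

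\emph{Case $P(x)=\{x,y\}$.} Here you give a search plan rather than a proof, and its first step provably fails. Going from $P$ to $R=\ldots,\{u\},\{v\},\{w,x\},\{y\}$ forces one move separating $u$ from $v$ and one moving $x$ from $y$ to $w$. So the only common neighbours of $P$ and $R$ are your two $Q$'s, and no other neighbour of $P$ is adjacent to $R$.

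The missing idea is to split on whether $vx\in E(G)$.
\begin{itemize}
\item If $vx\notin E(G)$, the $u$-split and $x$-split neighbours of $P$ are non-adjacent and have two common neighbours outside $N[P]$: the partition splitting both pairs, and the one with parts $\{u\},\{v,x\},\{w\},\{y\}$. This ignores $w$ and subsumes your appeal to Lemma~\ref{lem:22edges}.
\item If $vx\in E(G)$, let $Q_2$ and $Q_4$ be obtained from $P$ by adding $v$, respectively $x$, to $\{w\}$. They disagree exactly on the pairs $uv,vw,wx,xy$, and a move only changes pairs containing the moved vertex. So a common neighbour must come from moving $v$ and $x$ in some order. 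Moving $x$ first would have to put it into $w$'s part $\{v,w\}$, which is forbidden; moving $v$ first gives $P$. So $P$ is their only common neighbour, and property~1 fails because there is no outside common neighbour at all.
\end{itemize}
Property~2 is never needed in this case.
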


\begin{proof}
    Without loss of generality, let~$uw$ be an edge and~$vw$ a non-edge of~$G$.
    Let~$Q_1$ be the $u$-split neighbour of~$P$.
    Let~$Q_2$ be the partition obtained from~$P$ by adding~$v$ to~$\{w\}$.
    Suppose~$w$ does not have degree $n-2$.
    Then~$w$ has at least one non-neighbour $x\neq v$.
    If~$x$ belongs to a part of~$P$ of size at least~$4$, then by Lemma~\ref{lem:nosize4},~$P$ does not satisfy property~\hyperref[defn:prop1]{$1$}.
    If~$x$ belongs to a part of~$P$ of size~$3$, then by Lemma~\ref{lem:31edges},~$P$ does not satisfy property~\hyperref[defn:prop2]{$2$}.

    Suppose that~$x$ belongs to a part $\{x,y\}\in P$ of size~$2$, and that $vx\notin E(G)$.
    Let~$Q_3$ be the $x$-split neighbour of~$P$.
    Let~$R_1$ be the $x$-split neighbour of $Q_1$.
    Observe that $R_1$ is also the $u$-split neighbour of $Q_3$.
    Let~$R_2$ be the partition obtained from $Q_1$ by adding $x$ to $\{v\}$.
    Observe that $R_2$ is also obtained from $Q_3$ by adding $v$ to $\{x\}$.
    The situation is illustrated in Figure~\ref{fig:degreen-2fig1}.
    By construction, we have $Q_1,Q_3\in N(P)$, and both~$R_1$ and~$R_2$ are adjacent to both~$Q_1$ and~$Q_3$.
    Moreover,~$Q_1$ and~$Q_3$ are not adjacent since $\{u\},\{v\}\in Q_1$ and $\{u,v\}\in Q_3$, but~$Q_1$ is not the $u$-split neighbour of~$Q_3$.
    We have $R_1\notin N[P]$, since $\{x,y\}\in R_1$ but~$x$ and~$y$ belong to distinct parts of~$P$, each of size~$2$.
    Finally, we have $R_2\notin N[P]$ because $\{u\},\{v\}\in R_2$ and $\{u,v\}\in P$, but $R_2$ is not the $u$-split neighbour of~$P$.
    Therefore,~$P$ does not satisfy property~\hyperref[defn:prop1]{$1$}.

    Suppose that~$x$ belongs to a part $\{x,y\}\in P$ of size~$2$, and that $vx\in E(G)$.
    Let~$Q_4$ be the partition obtained from~$P$ by adding~$x$ to~$\{w\}$.
    The situation is illustrated in Figure~\ref{fig:degreen-2fig2}.
    Then $Q_2,Q_4\in N(P)$, and~$Q_2$ and~$Q_4$ are not adjacent since $\{v,w\}\in Q_2$, but $v$ and $w$ belong to distinct parts of $Q_4$, each of size~$2$.
    Suppose there is some $R\notin N[P]$ adjacent to both~$Q_2$ and~$Q_4$.
    Since $\{u,v\},\{w,x\}\in Q_4$ and $uw,vx\in E(G)$, we know that~$v$ and~$w$ belong to different parts of~$R$.
    Therefore, since $\{v,w\}\in Q_2$, we know that $R$ is obtained from $Q_2$ by moving either $v$ or $w$.
    In particular, this means that either $\{v\}\in R$ or $\{w\}\in R$, and that $R(x)=R(y)$.
    The latter fact means~$R$ is obtained from~$Q_4$ by adding~$y$ to~$\{w,x\}$ (since adding~$x$ to~$\{y\}$ gives~$P$), a contradiction as then neither~$\{v\}$ nor~$\{w\}$ are parts of~$R$.
    So no such~$R$ exists.
    Therefore,~$P$ does not satisfy property~\hyperref[defn:prop1]{$1$}.

    Suppose that~$x$ belongs to a part of~$P$ of size~$1$.
    Let~$Q_5$ be the partition obtained from~$P$ by adding~$w$ to~$\{x\}$.
    Let~$R$ be~$u$-split neighbour of~$Q_5$.
    Then~$R$ is the $w$-split neighbour of~$Q_1$, and~$R$ is obtained from~$Q_2$ by adding~$w$ to~$\{x\}$.
    But $R\notin N[P]$, since we have both $\{u\},\{v\}\in R$ and $\{u,v\}\in P$, but~$R$ is not the $u$-split neighbour of~$P$.
    Similarly,~$Q_1$ is not adjacent to~$Q_5$, since we have both $\{u\},\{v\}\in Q_1$ and $\{u,v\}\in Q_5$, but~$Q_1$ is not the $u$-split neighbour of~$Q_5$.
    So $Q_1$ and $Q_5$ are neighbours of $P$ which are not adjacent to each other in $\mathcal{B}$, but have a common neighbour outside of $N[P]$ which is adjacent to another neighbour $Q_2$ of $P$.
    Therefore,~$P$ does not satisfy property~\hyperref[defn:prop1]{$1$}.
\end{proof}

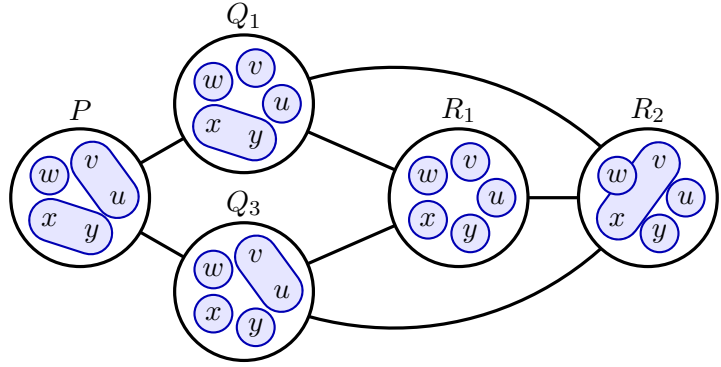
\begin{figure}[ht]
    \centering
    \begin{tikzpicture}[dot/.style={circle, fill, inner sep=2.5pt}, scale=0.142, xscale=1]
    \def\capsuleexpandfactor{1.1}
    \def\circleexpandfactor{1.7}
    \def\bigcircleradius{5.5cm}

    \begin{scope}[]
        \coordinate (A) at (0,0);
        \coordinate (B) at (30:15cm);
        \coordinate (C) at (-30:15cm);
        \coordinate (D) at (0:30cm);
        \coordinate (E) at (0:45cm);

        \draw[very thick] (A) -- (B);
        \draw[very thick] (A) -- (C);
        \draw[very thick] (D) -- (B);
        \draw[very thick] (D) -- (C);
        \draw[very thick] (D) -- (E);
        \draw[very thick, bend right=40] (E) to (B);
        \draw[very thick, bend left=40] (E) to (C);
    \end{scope}

    \filldraw[very thick, fill = white](0,0) circle [radius=\bigcircleradius];
    \coordinate (u) at (0:3);
    \coordinate (v) at (72:3);
    \coordinate (w) at (144:3);
    \coordinate (x) at (216:3);
    \coordinate (y) at (288:3);
    \drawlabelledcapsule{u}{v}{u}{v}{\capsuleexpandfactor}
    \drawlabelledcircle[\circleexpandfactor]{w}{w}
    \drawlabelledcapsule{x}{y}{x}{y}{\capsuleexpandfactor}
    \node at (0,7) {$P$};

    \begin{scope}[shift={(30:15cm)}]
        \filldraw[very thick, fill = white](0,0) circle [radius=\bigcircleradius];
        \coordinate (u) at (0:3);
        \coordinate (v) at (72:3);
        \coordinate (w) at (144:3);
        \coordinate (x) at (216:3);
        \coordinate (y) at (288:3);
        \drawlabelledcircle[\circleexpandfactor]{u}{u}
        \drawlabelledcircle[\circleexpandfactor]{v}{v}
        \drawlabelledcircle[\circleexpandfactor]{w}{w}
        \drawlabelledcapsule{x}{y}{x}{y}{\capsuleexpandfactor}
        \node at (0,7) {$Q_1$};
    \end{scope}

    \begin{scope}[shift={(-30:15cm)}]
        \filldraw[very thick, fill = white](0,0) circle [radius=\bigcircleradius];
        \coordinate (u) at (0:3);
        \coordinate (v) at (72:3);
        \coordinate (w) at (144:3);
        \coordinate (x) at (216:3);
        \coordinate (y) at (288:3);
        \drawlabelledcapsule{u}{v}{u}{v}{\capsuleexpandfactor}
        \drawlabelledcircle[\circleexpandfactor]{w}{w}
        \drawlabelledcircle[\circleexpandfactor]{x}{x}
        \drawlabelledcircle[\circleexpandfactor]{y}{y}
        \node at (0,7) {$Q_3$};
    \end{scope}

    \begin{scope}[shift={(0:30cm)}]
        \filldraw[very thick, fill = white](0,0) circle [radius=\bigcircleradius];
        \coordinate (u) at (0:3);
        \coordinate (v) at (72:3);
        \coordinate (w) at (144:3);
        \coordinate (x) at (216:3);
        \coordinate (y) at (288:3);
        \drawlabelledcircle[\circleexpandfactor]{u}{u}
        \drawlabelledcircle[\circleexpandfactor]{v}{v}
        \drawlabelledcircle[\circleexpandfactor]{w}{w}
        \drawlabelledcircle[\circleexpandfactor]{x}{x}
        \drawlabelledcircle[\circleexpandfactor]{y}{y}
        \node at (0,7) {$R_1$};
    \end{scope}

    \begin{scope}[shift={(0:45cm)}]
        \filldraw[very thick, fill = white](0,0) circle [radius=\bigcircleradius];
        \coordinate (u) at (0:3);
        \coordinate (v) at (72:3);
        \coordinate (w) at (144:3);
        \coordinate (x) at (216:3);
        \coordinate (y) at (288:3);
        \drawlabelledcircle[\circleexpandfactor]{u}{u}
        \drawlabelledcapsule{v}{x}{v}{x}{\capsuleexpandfactor}
        \drawlabelledcircle[\circleexpandfactor]{w}{w}
        \drawlabelledcircle[\circleexpandfactor]{y}{y}
        \node at (0,7) {$R_2$};
    \end{scope}

    \end{tikzpicture}
    \caption{The situation in the proof of Lemma~\ref{lem:degreen-2} if $P(x)=\{x,y\}$ and $vx\notin E(G)$.}
    \label{fig:degreen-2fig1}
\end{figure}

\begin{figure}[ht]
    \centering
    \begin{tikzpicture}[dot/.style={circle, fill, inner sep=2.5pt}, scale=0.142, xscale=1]
    \def\capsuleexpandfactor{1.1}
    \def\circleexpandfactor{1.7}
    \def\bigcircleradius{5.5cm}

    \begin{scope}[]
        \coordinate (A) at (0,0);
        \coordinate (B) at (30:15cm);
        \coordinate (C) at (-30:15cm);
        \draw[very thick] (B) -- (A) -- (C);
    \end{scope}

    \filldraw[very thick, fill = white](0,0) circle [radius=\bigcircleradius];
    \coordinate (u) at (0:3);
    \coordinate (v) at (72:3);
    \coordinate (w) at (144:3);
    \coordinate (x) at (216:3);
    \coordinate (y) at (288:3);
    \drawlabelledcapsule{u}{v}{u}{v}{\capsuleexpandfactor}
    \drawlabelledcircle[\circleexpandfactor]{w}{w}
    \drawlabelledcapsule{x}{y}{x}{y}{\capsuleexpandfactor}
    \node at (0,7) {$P$};

    \begin{scope}[shift={(30:15cm)}]
        \filldraw[very thick, fill = white](0,0) circle [radius=\bigcircleradius];
        \coordinate (u) at (0:3);
        \coordinate (v) at (72:3);
        \coordinate (w) at (144:3);
        \coordinate (x) at (216:3);
        \coordinate (y) at (288:3);
        \drawlabelledcapsule{u}{v}{u}{v}{\capsuleexpandfactor}
        \drawlabelledcapsule{w}{x}{w}{x}{\capsuleexpandfactor}
        \drawlabelledcircle[\circleexpandfactor]{y}{y}
        \node at (0,7) {$Q_4$};
    \end{scope}

    \begin{scope}[shift={(-30:15cm)}]
        \filldraw[very thick, fill = white](0,0) circle [radius=\bigcircleradius];
        \coordinate (u) at (0:3);
        \coordinate (v) at (72:3);
        \coordinate (w) at (144:3);
        \coordinate (x) at (216:3);
        \coordinate (y) at (288:3);
        \drawlabelledcapsule{v}{w}{v}{w}{\capsuleexpandfactor}
        \drawlabelledcapsule{x}{y}{x}{y}{\capsuleexpandfactor}
        \drawlabelledcircle[\circleexpandfactor]{u}{u}
        \node at (0,7) {$Q_2$};
    \end{scope}
    \end{tikzpicture}
    \caption{The situation in the proof of Lemma~\ref{lem:degreen-2} if $P(x)=\{x,y\}$ and $vx\in E(G)$.}
    \label{fig:degreen-2fig2}
\end{figure}

We are now ready to prove the lemma discussed at the start of this subsection.

\begin{lemma}\label{lem:props123}
    The following statements both hold.
    \begin{enumerate}
        \item $P^*$ satisfies properties \hyperref[defn:prop1]{$1$},~\hyperref[defn:prop2]{$2$}, and~\hyperref[defn:prop3]{$3$}.
        \item Suppose~$P$ satisfies properties \hyperref[defn:prop1]{$1$},~\hyperref[defn:prop2]{$2$}, and~\hyperref[defn:prop3]{$3$}.
        Then each of the following holds.
        \begin{itemize}
            \item $P$ has no parts of size greater than~$3$.
            \item Every vertex in every part of size~$3$ has degree $n-3$ in~$G$.
            \item If $A,B\in P$ are distinct parts of size~$2$, then every vertex in~$A$ is a neighbour of every vertex in~$B$.
            \item If $\{u,v\}\in P$ is a part of size~$2$ and $\{w\}\in P$ is a part of size~$1$, then at least one of the following holds.
            \begin{itemize}
                \item $w$ is a neighbour of both~$u$ and~$v$, or
                \item $w$ has degree $n-2$ in~$G$, or
                \item $\mathcal{B}=\BuG$ and $\ord{P}=k$.
            \end{itemize}
        \end{itemize}
    \end{enumerate}

\end{lemma}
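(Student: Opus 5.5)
The plan is to reduce everything to a degree count, using the lemmas of this subsection for the structural input and property~3 for the tightness.

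\textbf{Part~1.} Lemma~\ref{lem:P^*props1and2} already shows that $P^*$ satisfies properties~1 and~2, so only property~3 remains. First I compute $d(P^*)$. Every neighbour of $P^*$ is obtained by merging $\{u\},\{v\}$ for a non-edge $uv$ of $G$. In the case $\mathcal{B}=\BuG$ the result has $n-1\geq k$ parts, so it is still a vertex of $\mathcal{B}$. Hence $d(P^*)=\ord{E(\comp{G})}$.

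Now let $P'$ satisfy properties~1 and~2. I will show $d(P')\leq \ord{E(\comp G)}$ by distributing the neighbours of $P'$ among the pairs of parts of $P'$.
\begin{itemize}
\item \emph{Splits.} A part $A$ contributes at most $\binom{\ord{A}}{2}$ split neighbours, namely $1$ if $\ord A=2$ and $3$ if $\ord A=3$. These are charged to the $\binom{\ord A}{2}$ internal non-edges of $A$. Parts of size at least~$4$ cannot occur, by Lemma~\ref{lem:nosize4}.
\item \emph{Additions.} For distinct parts $A,B$, let $a$ be the number of vertices of $A$ anticomplete to $B$, and $b$ the number of vertices of $B$ anticomplete to $A$. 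Adding a vertex of $A$ to $B$, or a vertex of $B$ to $A$, gives at most $a+b$ neighbours. If $A=\{u\}$ and $B=\{v\}$, the two moves give the same partition, so there is only one neighbour.
\end{itemize}
Next I compare $a+b$ with the number $e(A,B)$ of non-edges of $G$ between $A$ and $B$. This number satisfies $e(A,B)\geq a\ord B+b\ord A-ab$. A short case check on sizes in $\{1,2,3\}$ gives $a+b\leq e(A,B)$ (or $1\leq e(A,B)$ for two singletons), with one exception. The exception is a size-$2$ part fully anticomplete to a singleton, where the count is $3>2$. Lemma~\ref{lem:21edges} excludes this configuration unless $\mathcal{B}=\BuG$ and $\ord{P'}=k$. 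In that case adding $u$ to the size-$2$ part would drop below $k$ parts, so it is not an edge of $\mathcal{B}$, and the count becomes $2$. Summing over parts and pairs of parts then gives $d(P')\leq d(P^*)$, which is property~3 for $P^*$.

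\textbf{Part~2.} Let $P$ satisfy properties~1,~2 and~3. By part~1, $d(P)\geq d(P^*)$, so all of the inequalities above are equalities.
\begin{itemize}
\item The first bullet is Lemma~\ref{lem:nosize4}.
\item \emph{Size-$3$ parts.} Let $A$ be a part of size~$3$. Against singletons, Lemma~\ref{lem:31edges} forces $A$ to be complete. Against a part $B$ of size $2$ or $3$, I use tightness: any non-edge between $A$ and $B$ with $a=b=0$ is slack. More generally, the bound $a\ord B+b\ord A-ab\geq a+b$ is strict whenever $e(A,B)>0$ and $\ord A,\ord B\geq 2$. So $e(A,B)=0$, and every vertex of $A$ is adjacent to all $n-3$ vertices outside $A$.
\item \emph{Two parts of size~$2$.} The same strictness argument shows these are complete to each other.
\item \emph{A size-$2$ part $\{u,v\}$ and a singleton $\{w\}$.} If $w$ is adjacent to both $u$ and $v$ we are in the first alternative, and if $\ord{P}=k$ in $\BuG$ we are in the third. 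If there are zero edges between them, Lemma~\ref{lem:21edges} applies, which is only possible in the third alternative. If there is exactly one edge, Lemma~\ref{lem:degreen-2} forces $\deg w=n-2$.
\end{itemize}

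\textbf{Main obstacle.} I expect the main work to be the per-pair inequality and its strictness analysis. The crude bound $a\ord B+b\ord A-ab$ has to be checked carefully in the small cases where $a$ or $b$ equals a full part size. I also have to make sure that in the $\BuG$ setting every move I count is actually present or absent in $\mathcal{B}$ as claimed, since a move may change the number of parts. If the crude bound fails somewhere, I would first try using Lemmas~\ref{lem:22edges} and~\ref{lem:degreen-2} to rule out the offending configuration.
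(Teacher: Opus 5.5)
Your route is the paper's: bound $d(P')$ pair by pair against $\ord{E(\comp{G})}=d(P^*)$, then read off the structure from the equality case. As written, though, your case check makes two false claims, and each one breaks a step.

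\textbf{Part~1 misses an exception.} Your list of exceptions to $a+b\le e(A,B)$ is incomplete. Take a singleton $\{w\}$ with no edges to a part $B$ of size~$3$. Then $a=1$ and $b=3$, which gives up to $4$ neighbours, since adding $w$ to $B$ is a legal move in $\BG$ and also in $\BuG$ when $\ord{P'}>k$. There are only $e(A,B)=3$ non-edges. Lemma~\ref{lem:21edges} does not cover this configuration. You must invoke Lemma~\ref{lem:31edges} via property~2 already in Part~1, to get $e(A,B)=0$ for every pair consisting of a size-$3$ part and a singleton. This is what the paper does when it sets $f_P(A,B)=f_P(B,A)=0$ for $A\in P_1$, $B\in P_3$. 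Without it, the inequality $d(P')\le d(P^*)$, and hence property~3 for $P^*$, is not established.

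\textbf{Part~2 overclaims strictness.} You say the count is strict whenever $e(A,B)>0$ and $\ord{A},\ord{B}\geq 2$. This is false when $\ord{A}=\ord{B}=2$ and there are no edges between $A$ and $B$. In that case $a=b=2$, so
\[
a\ord{B}+b\ord{A}-ab=4=a+b=e(A,B).
\]
This configuration is therefore fully compatible with $d(P)=d(P^*)$, and the degree count alone cannot show that two size-$2$ parts are complete to each other. You have to exclude it separately with Lemma~\ref{lem:22edges}, since it violates property~1. The paper's equality condition for \eqref{eqn:countingsize2and3} records exactly this case for that reason. You list Lemma~\ref{lem:22edges} only as a fallback in your closing paragraph, but your actual argument relies on the false strictness claim.

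Your strictness claim does hold whenever one of the two parts has size~$3$, so your bullet on size-$3$ parts is correct. Once you add these two lemma invocations, your proof matches the paper's.
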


\begin{proof}
    We will prove both statements simultaneously.
    Let $\Omega_{12}\subseteq V(\mathcal{B})$ denote the set of vertices of~$\mathcal{B}$ which satisfy properties~\hyperref[defn:prop1]{$1$} and~\hyperref[defn:prop2]{$2$}.
    Let $\Omega_{3}\subseteq \Omega_{12}$ denote the set of vertices of $\mathcal{B}$ which satisfy properties \hyperref[defn:prop1]{$1$},~\hyperref[defn:prop2]{$2$}, and~\hyperref[defn:prop3]{$3$}.
    By definition,~$\Omega_{3}$ is just the set of vertices in~$\Omega_{12}$ which have maximal degree in~$\mathcal{B}$.
    Observe that $\Omega_{12}\neq \emptyset$, since $P^*\in \Omega_{12}$ by Lemma~\ref{lem:P^*props1and2}.
    Let $P\in \Omega_{12}$ be given.
    It then suffices to show that $d(P)\leq d(P^*)$, and that if $d(P)=d(P^*)$, then each of the properties claimed in the statement of the lemma holds for~$P$.

    In order to relate the degrees of~$P$ and~$P^*$, we define two functions $f_{P}:P\times P\rightarrow \mathbb{N}$ and $f_{P^*}:P\times P\rightarrow \mathbb{N}$.
    Let $P'\in \{P,P^*\}$ and $A,B\in P'$ be given.
    If $A$ and $B$ are distinct, and $\ord{A}=\ord{B}=1$, and either $\mathcal{B}=\BG$ or $\mathcal{B}=\BuG$ with $\ord{P'}>k$, then set $f_{P'}(A,B)\coloneqq\frac{1}{2}$.
    Otherwise, let $f_{P'}(A,B)$ equal the number of neighbours $R\in N(P')$ which can be obtained from~$P'$ by adding a vertex from~$A$ to a part of $P'$ contained in~$B$ (i.e.\ by moving a vertex $v\in V(G)$ such that $P'(v)\subseteq A$ and $R(v)\subseteq B\cup\{v\}$).
    Observe that we can have $f_{P'}(A,B)\neq f_{P'}(B,A)$.

    For each natural number $i\in \mathbb{N}$, let $P_i\subseteq P$ denote the set of parts of~$P$ of size~$i$.
    By Lemma~\ref{lem:nosize4}, all parts of~$P$ have size at most~$3$.
    For each $P'\in \{P,P^*\}$, we therefore have
    \begin{align}\label{eqn:degreecountingsum}
        d(P')&=\sum_{j=1}^3\sum_{i=1}^j\sum_{\substack{A\in P_i\\B\in P_j}}f_{P'}(A,B),
    \end{align}
   using the fact that, given a neighbour $Q\in N(P')$, if~$Q$ is obtained from~$P'$ by merging two parts~$\{u\}$ and~$\{v\}$, then~$Q$ can be obtained from~$P'$ in exactly two ways (namely, by adding~$u$ to~$\{v\}$, or by adding~$v$ to~$\{u\}$), whereas otherwise, there is a unique vertex $w\in V(G)$ and a unique part $C\in P'$ such that~$Q$ is obtained from~$P'$ by adding~$w$ to~$C$.

    We now compare the two functions~$f_{P}$ and~$f_{P^*}$.
    \begin{itemize}
        \item For each $A\in P_1$ we have $f_P(A,A)=f_{P^*}(A,A)=0$.
        \item For each $A\in P_2$ we have $f_P(A,A)=f_{P^*}(A,A)=1$.
        \item For each $A\in P_3$ we have $f_P(A,A)=f_{P^*}(A,A)=3$.
        \item Let $A,B\in P_1$ be distinct.
            Suppose there is an edge between~$A$ and~$B$.
            Then we have $f_P(A,B)=f_{P^*}(A,B)=0$.
            Now suppose there is no edge between~$A$ and~$B$.
            Then we have $f_{P^*}(A,B)=\frac{1}{2}$, and we have $f_P(A,B)=0$ if $\mathcal{B}=\BuG$ and $\ord{P}=k$, and we have $f_P(A,B)=\frac{1}{2}$ otherwise.
            So in each case we have
            \begin{align*}
                f_P(A,B)\leq f_{P^*}(A,B).
            \end{align*}
        \item By Lemma~\ref{lem:21edges}, for each $A\in P_1$ and $B\in P_2$ we have $f_P(A,B)=f_{P^*}(A,B)=0$, and either there is one non-edge between~$A$ and~$B$, in which case $f_P(B,A)=f_{P^*}(B,A)=1$, or there are no non-edges between~$A$ and~$B$, in which case $f_P(B,A)=f_{P^*}(B,A)=0$.
        \item By Lemma~\ref{lem:31edges}, for each $A\in P_1$, $B\in P_3$, we have $f_P(A,B)=f_{P^*}(A,B)=f_P(B,A)=f_{P^*}(B,A)=0$.

    \item Let~$A, B\in P_2\cup P_3$ be distinct.
    Observe that $f_P(A,B)+f_P(B,A)$ equals the number of vertices in~$A$ with no neighbours in~$B$ plus the number of vertices in~$B$ with no neighbours in~$A$, while $f_{P^*}(A,B)+f_{P^*}(B,A)$ equals the number of non-edges from~$A$ to~$B$.
    In particular, we have
    \begin{align}\label{eqn:countingsize2and3}
        f_P(A,B)+f_P(B,A)\leq f_{P^*}(A,B)+f_{P^*}(B,A),
    \end{align}
    with equality if and only if either there are no non-edges between~$A$ and~$B$, or we have $\ord{A}=\ord{B}=2$ and there are no edges between~$A$ and~$B$.
    \end{itemize}

    Therefore, by~\eqref{eqn:degreecountingsum}, we have $d(P)\leq d(P^*)$ as claimed.
    Moreover, if $d(P)=d(P^*)$, then by the condition given to achieve equality in~\eqref{eqn:countingsize2and3}, together with Lemmas~\ref{lem:nosize4}, \ref{lem:22edges}, \ref{lem:21edges},~\ref{lem:31edges}, and~\ref{lem:degreen-2},~$P$ satisfies every condition claimed in the statement of the lemma.
\end{proof}

\subsection{The structure of $\mathcal{B}$ near vertices satisfying properties~\hyperref[defn:prop3]{$3$},~\hyperref[defn:prop4]{$4$}, and~\hyperref[defn:prop5]{$5$}}\label{subsect:mainstep2}

For the rest of this section, let $\Omega_{3}\subseteq V(\mathcal{B})$ denote the set of vertices satisfying properties~\hyperref[defn:prop1]{$1$},~\hyperref[defn:prop2]{$2$} and~\hyperref[defn:prop3]{$3$}, let $\Omega_{4}\subseteq \Omega_{3}$ denote the set of vertices of~$\Omega_{3}$ which also satisfy property~\hyperref[defn:prop4]{$4$}, and let $\Omega_{5}\subseteq \Omega_{4}$ denote the set of vertices of~$\Omega_{4}$ which also satisfy property~\hyperref[defn:prop5]{$5$}.
Observe that, by definition,~$\Omega_5$ is the set of $P^*$-candidates in~$\mathcal{B}$.
Lemma~\ref{lem:props123} tells us that $P^*\in \Omega_{3}$, and that each partition $P\in \Omega_{3}$ is forced to have a very specific form.
In this subsection, we will study the local structure of~$\mathcal{B}$ near each $P\in \Omega_{3}$, with the ultimate goal of showing that we can use the local structure of~$\mathcal{B}$ near any $P^*$-candidate (i.e.\ near any $P\in\Omega_5$) to determine the graph~$G'$.

We begin with the following classical theorem of Whitney~\cite{whitney1932}.
As usual, the \textit{line graph}~$L(H)$ of a graph~$H$ is the graph with vertex set~$E(H)$ where $e,f\in V(L(H))$ are adjacent if and only if~$e$ and~$f$ are incident in~$H$, and the \textit{claw}~$K_{1,3}$ is the graph with one vertex of degree~$3$ and three vertices of degree~$1$.
\begin{theorem}[Whitney, 1932 \cite{whitney1932}]\label{thm:Whitney}
    Let~$G_1$ and~$G_2$ be connected graphs not isomorphic to the claw~$K_{1,3}$.
    If $L(G_1)\cong L(G_2)$, then $G_1\cong G_2$.
\end{theorem}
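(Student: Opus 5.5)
We do not reprove this classical result in the paper; it is cited from~\cite{whitney1932}. If a self-contained argument were wanted, we would use the standard route: recover from $L(G_1)$ a canonical family of cliques corresponding to the vertex stars of $G_1$, show that this family is determined by the abstract graph $L(G_1)$, and rebuild $G_1$ from it.

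First we fix what must be recovered. For a connected graph $H$ and a vertex $x$ of degree at least~$1$, the edges incident with~$x$ form a clique $S_x$ in $L(H)$. Every vertex of $L(H)$ lies in exactly two of these cliques (possibly size~$1$), and every edge of $L(H)$ lies in exactly one. The vertices of $H$ can be identified with the family $\{S_x\}$, with $x$ adjacent to $y$ exactly when $S_x\cap S_y\neq\emptyset$. So it suffices to show that, when $H$ is connected and not $K_{1,3}$, the family of stars with at least two edges is determined by $L(H)$ up to automorphism. Leaves are then recovered as the vertices of $L(H)$ lying in only one such clique.

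The key step is distinguishing the two ways a triangle of $L(H)$ can arise: from three edges sharing a vertex, or from a triangle of $H$. We would use the parity criterion that already appears in Lemma~\ref{lem:type1neighbours} and property~\hyperref[defn:prop2]{$2$}. If three edges form a triangle of $H$, then every other edge of $H$ meets either zero or two of them. If they form part of a star and $H$ is large enough, some further edge meets exactly one of them. Given this, call a triangle of $L(H)$ \emph{odd} if some other vertex is adjacent to exactly one of its vertices. We then show that the stars of size at least~$3$ are exactly the maximal cliques of $L(H)$ that contain an odd triangle. Stars of size~$2$ are exactly the edges of $L(H)$ not covered by those cliques. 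This gives a recipe phrased purely in terms of $L(H)$. An isomorphism $L(G_1)\to L(G_2)$ therefore maps star families to star families and induces a bijection $V(G_1)\to V(G_2)$ preserving adjacency, so $G_1\cong G_2$.

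The main obstacle is the small cases where the parity criterion fails because there are too few edges to witness oddness. These are $K_3$ against $K_{1,3}$, which is excluded by hypothesis, and configurations on at most four vertices such as $K_4$, $K_4$ minus an edge, and $K_{1,3}$ plus an edge. Here a triangle of $H$ and a star can look alike inside $L(H)$. We would dispose of these by direct check: enumerate the connected graphs with at most four vertices, or with $L(H)$ having at most six vertices, and verify that their line graphs are pairwise non-isomorphic apart from the claw/triangle pair. For every larger connected graph, we would argue that each triangle of $H$ and each star of size at least~$3$ has a witness edge of the required parity, so the general argument applies.
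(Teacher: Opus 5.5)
The paper gives no proof of this theorem; it simply cites Whitney, so your sketch has to be judged on its own. It follows the standard odd-triangle route, but as written it has a genuine gap.

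Your recipe identifies the stars of size at least $3$ with the maximal cliques of $L(H)$ that contain an odd triangle. You then assert that, outside the small exceptions, every star of size at least $3$ has a witnessing edge. This fails for the entire family $H=K_{1,m}$ with $m\geq 4$. Here $L(H)=K_m$, and every vertex outside a triangle is adjacent to all three of its vertices, so $L(H)$ has no odd triangle at all. Your recipe then detects no star of size at least $3$ and declares every edge of $K_m$ to be a star of size $2$, which reconstructs the wrong graph. Since this family is infinite, your finite enumeration cannot absorb it. More precisely, for connected $H$ and $\deg x\geq 3$, the star at $x$ contains no odd triangle exactly when $V(H)=N[x]$ and either $\deg x=3$ or $N(x)$ is independent. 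The second alternative is precisely $K_{1,m}$.

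The repair is easy.
\begin{enumerate}
\item Four or more pairwise incident edges of a simple graph must share a common vertex. So the maximal cliques of $L(H)$ of size at least $4$ are exactly the stars of size at least $4$, with no parity test needed.
\item Use oddness only for maximal cliques of size $3$. A triangle of $H$ is never odd.
\item A star $\{xa,xb,xc\}$ with $\deg x=3$ is odd as soon as some edge joins $\{a,b,c\}$ to a vertex outside $\{x,a,b,c\}$. Connectivity guarantees such an edge unless $V(H)=\{x,a,b,c\}$.
\end{enumerate}
With the recipe ``maximal cliques of size at least $4$, together with odd maximal cliques of size $3$'', the only failures are the four-vertex graphs with a vertex of degree $3$: the claw, the paw, the diamond and $K_4$. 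Your finite check over connected graphs with at most six edges, which also covers any partner graph with an isomorphic line graph, then finishes the argument.
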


Observe that the disjoint union $G_1+G_2$ of two graphs~$G_1$ and~$G_2$ satisfies $L(G_1+G_2)=L(G_1)+L(G_2)$, that $L(K_{1,3})\cong L(K_3)$, and that~$L(K_1)$ is the graph on no vertices.
Therefore, Whitney's Theorem says that we can `almost' determine any graph~$H$ from its line graph~$L(H)$, except that we cannot distinguish components of~$H$ isomorphic to the claw~$K_{1,3}$ from those isomorphic to the triangle~$K_3$, and that we may miss isolated vertices of~$H$.
We therefore obtain the following, which we will use at the end of this subsection.

\begin{observation}\label{obs:lineinverse}
    Let~$H$ be a graph.
    Let~$H^{\ddagger}$ be the graph obtained from~$H$ by removing all of its isolated vertices, and by replacing all of its triangle components with claw components.
    Then~$H^{\ddagger}$ can be determined from~$L(H)$.
\end{observation}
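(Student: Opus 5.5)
The plan is to build $H^{\ddagger}$ directly from $L(H)$ by reversing the two lossy features of the line graph operation. First I will split $L(H)$ into its connected components $L_1,\dots,L_m$. Since $L(H_1+H_2)=L(H_1)+L(H_2)$, and the line graph of a connected graph with at least one edge is connected and nonempty, these components correspond bijectively to the components $H_1,\dots,H_m$ of $H$ that have at least one edge, with $L_i=L(H_i)$. Components of $H$ that are isolated vertices contribute nothing to $L(H)$. The isolated vertices of $H$ are exactly what $H^{\ddagger}$ discards, so nothing is lost there.

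Next, for each $i$, I will choose any connected graph $H_i'$ with $L(H_i')\cong L_i$ and with $H_i'\not\cong K_3$. Such a graph exists: take $H_i$ itself, unless $H_i\cong K_3$, in which case take $K_{1,3}$, using $L(K_{1,3})\cong L(K_3)$. I will argue that $H_i'$ is determined up to isomorphism by $L_i$ and equals the corresponding component of $H^{\ddagger}$. Suppose $H_i''$ is another connected graph, not isomorphic to $K_3$, with $L(H_i'')\cong L_i$. If neither $H_i'$ nor $H_i''$ is the claw, Theorem~\ref{thm:Whitney} gives $H_i'\cong H_i''$. If one of them is the claw, then $L_i\cong K_3$, so the other is a connected graph with exactly three edges, pairwise incident. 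Such a graph is a triangle or a claw, since three pairwise incident edges either share a common vertex or form a triangle. The triangle is excluded, so both are claws.

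Hence the rule ``the unique connected non-$K_3$ graph whose line graph is $L_i$'' is well defined. It returns $K_{1,3}$ when $H_i$ is a triangle or a claw, and returns $H_i$ otherwise. This is precisely the component of $H^{\ddagger}$ corresponding to $H_i$.

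Finally, I set $H^{\ddagger}\cong H_1'+\dots+H_m'$. This is determined by $L(H)$ alone because the multiset of components of $L(H)$ is an isomorphism invariant. The only genuinely delicate point is the claw/triangle exception in Theorem~\ref{thm:Whitney}, which is handled by the three-edges case analysis above. I do not expect any other obstacle.
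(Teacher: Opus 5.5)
Your proof is correct and takes the same route as the paper. The paper obtains the observation directly from Whitney's Theorem~\ref{thm:Whitney}, using $L(H_1+H_2)=L(H_1)+L(H_2)$, $L(K_{1,3})\cong L(K_3)$, and the fact that isolated vertices contribute nothing to $L(H)$; you simply spell out the component-by-component reconstruction and the check on three pairwise incident edges that the paper leaves implicit.
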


An outline of what we will show in this subsection is as follows.
First, we will show that the open neighbourhood~$\mathcal{N}(P^*)$ is isomorphic to the line graph~$L(\comp{G})$, and that for each $P\in \Omega_{3}$, the open neighbourhood~$\mathcal{N}(P)$ is isomorphic to a subgraph of~$L(\comp{G})$.
Recall that, for each $P\in V(\mathcal{B})$, we use~$N_P$ to denote the number of vertices plus the number of edges in~$\mathcal{N}(P)$, and that $\Omega_{4}\subseteq \Omega_{3}$ is the set of $P\in \Omega_{3}$ such that $N_P\geq N_{P'}$ for all $P'\in \Omega_{3}$.
Therefore we have $P^*\in \Omega_{4}$, and for each $P\in \Omega_4$ we have $\mathcal{N}(P)\cong L(\comp{G})$.
Hence, by Whitney's Theorem~\ref{thm:Whitney}, we can almost determine~$G'$ from any $P\in \Omega_{4}$, except that we do not yet have a way to distinguish between triangles and claws in~$\comp{G}$.
Fix any $P\in \Omega_{4}$.
Then each triangle in~$L(\comp{G})\cong \mathcal{N}(P)$ corresponds to either a triangle or a claw in~$\comp{G}$.
We will show the following.
\begin{enumerate}
    \item For each triangle~$\Delta$ in~$\mathcal{N}(P)$, if~$\Delta$ arose from a claw in~$\comp{G}$, then every common neighbour of the three vertices in~$\Delta$ lies in~$N[P]$.
    \item For each triangle~$\Delta^*$ in~$\mathcal{N}(P^*)$, if~$\Delta^*$ arose from a triangle in~$\comp{G}$, then the three vertices in~$\Delta^*$ have a common neighbour $R\notin N[P^*]$.
\end{enumerate}

Recall that, for each $P'\in V(\mathcal{B})$, we use~$T_{P'}$ to denote the number of triangles in~$\mathcal{N}(P')$ which have a common neighbour $R\notin N[P']$, and that $\Omega_5\subseteq \Omega_4$ is the set of vertices $P\in \Omega_4$ such that $T_P\geq T_{P'}$ for every $P'\in \Omega_{4}$.
Therefore, for each $P\in \Omega_5$, we know that each triangle~$\Delta$ in~$\mathcal{N}(P)$ arose from a triangle in~$\comp{G}$ if and only if the vertices of~$\Delta$ have a common neighbour outside of~$N[P]$, and arose from a claw otherwise.
Then we can reconstruct~$G'$ from the local structure of~$\mathcal{B}$ near any $P\in \Omega_5$, as required.

We begin by defining the following useful function, which associates each neighbour of a vertex $P\in \Omega_{3}$ with a non-edge $uv\in E(\comp{G})$.

\begin{definition}\label{defn:neighbourtypes}
    Let $P\in \Omega_{3}$.
    Define the map $\psi_P:N(P)\rightarrow E(\comp{G})$ for each $Q\in N(P)$ as follows.
    \begin{itemize}
        \item If~$Q$ is obtained from~$P$ by merging two parts~$\{u\}$ and~$\{v\}$, let $\psi_P(Q)\coloneqq\{u,v\}$, and call~$Q$ a \textit{type~$1$} neighbour of~$P$.
        \item If~$Q$ is obtained from~$P$ by replacing a part $\{u,v\}$ with two parts~$\{u\}$ and~$\{v\}$, let $\psi_P(Q)\coloneqq\{u,v\}$, and call~$Q$ a \textit{type~$2$} neighbour of~$P$.
        \item If~$Q$ is obtained from~$P$ by replacing two parts $\{u,v\}$ and~$\{w\}$ with two parts~$\{u\}$ and $\{v,w\}$, let~$\psi_P(Q)\coloneqq\{v,w\}$, and call~$Q$ a \textit{type~$3$} neighbour of~$P$ if~$w$ has degree $n-2$ and call~$Q$ a \textit{type~$4$} neighbour of~$P$ otherwise.
        \item If~$Q$ is obtained from~$P$ by replacing a part $\{u,v,w\}$ with two parts~$\{u\}$ and $\{v,w\}$ (where~$u,v$, and~$w$ all have degree $n-3$ by Definition~\ref{defn:P^*candidate}), let $\psi_P(Q)\coloneqq\{v,w\}$, and call~$Q$ a \textit{type~$5$} neighbour of~$P$.
    \end{itemize}
\end{definition}

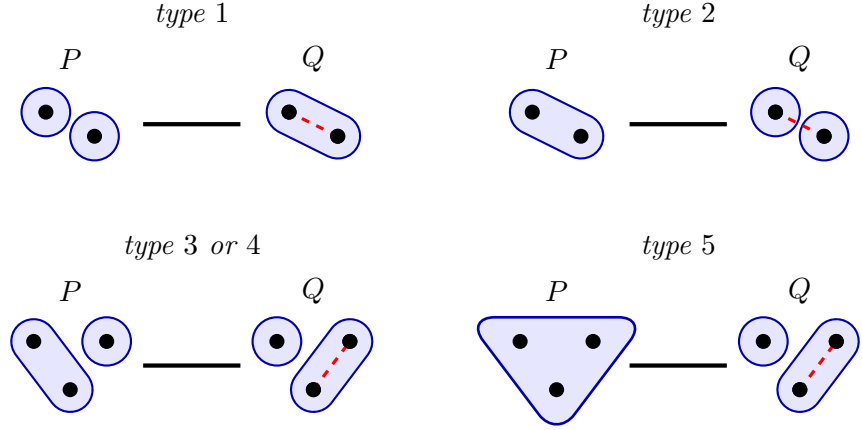
\begin{figure}[ht]
    \centering
    \begin{tikzpicture}[dot/.style={circle, fill, inner sep=2pt}, scale=0.25, xscale=1]
    \def\capsuleexpandfactor{1}

    \begin{scope}
        \node[] at (-1,3.2)  {$P$};
        \node[] at (9,3.2)  {$Q$};

        \node[] at (4,5)  {\textit{type $1$}};
        \coordinate (A) at (0,0);
        \coordinate (B) at (-2,1);
        \drawcircle[1]{A};
        \drawcircle[1]{B};

        \coordinate (A') at (10,0);
        \coordinate (B') at (8,1);
        \drawcapsule{A'}{B'}{\capsuleexpandfactor}

        \draw[ultra thick] (2,0.5) -- (6,0.5);

        \draw[dashed, red, very thick] (A') -- (B');
        \node[circle, fill, inner sep=2pt] at (A') {};
        \node[circle, fill, inner sep=2pt] at (B') {};
    \end{scope}

    \begin{scope}[xshift=20cm]
        \node[] at (-1,3.2)  {$P$};
        \node[] at (9,3.2)  {$Q$};

        \node[] at (4,5)  {\textit{type $2$}};
        \coordinate (A) at (0,0);
        \coordinate (B) at (-2,1);
        \drawcapsule{A}{B}{\capsuleexpandfactor}

        \coordinate (A') at (10,0);
        \coordinate (B') at (8,1);
        \drawcircle[1]{A'};
        \drawcircle[1]{B'};

        \draw[ultra thick] (2,0.5) -- (6,0.5);

        \draw[dashed, red, very thick] (A') -- (B');
        \node[circle, fill, inner sep=2pt] at (A') {};
        \node[circle, fill, inner sep=2pt] at (B') {};
    \end{scope}

    \begin{scope}[yshift=-10cm]
        \node[] at (-1,3.6)  {$P$};
        \node[] at (9,3.6)  {$Q$};

        \node[] at (4,5.4)  {\textit{type $3$ or $4$}};
        \coordinate (A) at (-1,-0.5);
        \coordinate (B) at (-2.5,1.5);
        \coordinate (C) at (0.5,1.5);
        \drawcapsule{A}{B}{\capsuleexpandfactor}
        \drawcircle[1]{C};

        \coordinate (A') at (9,-0.5);
        \coordinate (B') at (7.5,1.5);
        \coordinate (C') at (10.5,1.5);
        \drawcapsule{A'}{C'}{\capsuleexpandfactor}
        \drawcircle[1]{B'};

        \draw[ultra thick] (2,0.5) -- (6,0.5);

        \draw[dashed, red, very thick] (A') -- (C');
        \node[circle, fill, inner sep=2pt] at (A') {};
        \node[circle, fill, inner sep=2pt] at (C') {};
    \end{scope}

    \begin{scope}[xshift=20cm, yshift = -10cm]
        \node[] at (-1,3.6)  {$P$};
        \node[] at (9,3.6)  {$Q$};

        \node[] at (4,5.4)  {\textit{type $5$}};
        \coordinate (A) at (-1,-0.5);
        \coordinate (B) at (-2.5,1.5);
        \coordinate (C) at (0.5,1.5);
        \drawroundedtriangle[2.5]{A}{B}{C}

        \coordinate (A') at (9,-0.5);
        \coordinate (B') at (7.5,1.5);
        \coordinate (C') at (10.5,1.5);
        \drawcapsule{A'}{C'}{\capsuleexpandfactor}
        \drawcircle[1]{B'};

        \draw[ultra thick] (2,0.5) -- (6,0.5);
        \draw[dashed, red, very thick] (A') -- (C');
        \node[circle, fill, inner sep=2pt] at (A') {};
        \node[circle, fill, inner sep=2pt] at (C') {};
    \end{scope}
    \end{tikzpicture}
    \caption{The five types of neighbours~$Q$ of a $P^*$-candidate~$P$.
    For each type, the dashed line represents the non-edge $\psi_P(Q)\in E(\comp{G})$.}
    \label{fig:neighbourtypes}
\end{figure}

For each $Q\in N(P)$, let $\type(Q)\in\{1,2,3,4,5\}$ represent the type of~$Q$.
These five types of neighbour are illustrated in Figure~\ref{fig:neighbourtypes}.
For ease of notation, we will define $\psi\coloneqq \psi_P$ when the choice of~$P$ is clear from context.

\begin{lemma}\label{lem:psibij}
    Let $P\in \Omega_{3}$.
    Then~$\psi_P$ is well-defined and injective.
    Moreover,~$\psi_{P^*}$ is bijective.
\end{lemma}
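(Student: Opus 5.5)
We split the statement into well-definedness, injectivity, and bijectivity for $P^*$, relying throughout on the structural description of $P\in\Omega_3$ from Lemma~\ref{lem:props123}.

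\emph{Well-definedness.} Since $P$ has no parts of size greater than~$3$, every neighbour~$Q$ of~$P$ arises by moving a single vertex. Either it splits off a vertex of a part of size~$2$ or~$3$, or it adds a vertex from a part of size $1$, $2$, or~$3$ to another part. We check that each such move falls under exactly one of the five types in Definition~\ref{defn:neighbourtypes}. The key point is that moves that would land outside the list cannot occur:
\begin{itemize}[label=--]
    \item Adding any vertex to a part of size $2$ or~$3$ is impossible. Between two parts of size~$2$ every pair is an edge, and vertices of parts of size~$3$ have degree $n-3$, so they are adjacent to everything outside their part.
    \item Adding a vertex of a size-$3$ part elsewhere is impossible for the same degree reason. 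The only exception is splitting it off, which is type~$5$.
    \item Adding a vertex $u$ of a size-$2$ part $\{u,v\}$ to a singleton $\{w\}$ is type $3$ or~$4$.
    \item Merging two singletons is type~$1$, and splitting a size-$2$ part is type~$2$.
\end{itemize}
For a merge, $\psi_P(Q)$ is the unordered pair, so it does not depend on which of the two vertices is regarded as moved. In every case the two vertices of $\psi_P(Q)$ share a part of $P$ or~$Q$, hence form a non-edge of~$G$, so $\psi_P(Q)\in E(\comp{G})$.

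\emph{Injectivity.} Suppose $\psi_P(Q)=\psi_P(Q')=\{a,b\}$. We compare the possible configurations of $P(a)$ and $P(b)$.
\begin{itemize}[label=--]
    \item If $P(a)=P(b)$, this part has size $2$ or~$3$. If it is $\{a,b\}$, the only neighbour yielding $\{a,b\}$ is the split (type~$2$), because in type~$3$/$4$ the pair $\psi$ consists of a moved vertex and a singleton, which lie in distinct parts of~$P$. If it is $\{a,b,c\}$, the only option is splitting off~$c$ (type~$5$).
    \item If $P(a)\ne P(b)$ and both are singletons, $Q$ must be the merge (type~$1$). Type~$3$/$4$ would need one of $a,b$ in a size-$2$ part.
    \item If $P(a)=\{a,c\}$ and $P(b)=\{b\}$, then $Q$ is obtained by adding $a$ to $\{b\}$, which is determined uniquely.
    \item Two size-$2$ parts, or a part of size~$3$ together with another part, cannot occur by the non-adjacency facts above.
\end{itemize}
In each case $Q$ is recovered uniquely from $\{a,b\}$ together with~$P$, so $\psi_P$ is injective.

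\emph{Bijectivity for $P^*$.} Every neighbour of $P^*$ is of type~$1$. Conversely, any non-edge $uv$ of~$G$ gives the neighbour obtained by merging $\{u\}$ and $\{v\}$. In the case $\mathcal{B}=\BuG$ this neighbour has $n-1\geq k$ parts, because $k\leq n-1$, so it lies in~$\mathcal{B}$. Hence $\psi_{P^*}$ is surjective, and with injectivity it is bijective.

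The main obstacle is well-definedness. It requires checking carefully, using each clause of Lemma~\ref{lem:props123}, that no other kind of move is available. The one case not excluded by degree alone is a size-$3$ part next to a size-$2$ part. There the degree $n-3$ condition forces all cross pairs to be edges, so this case also causes no trouble.
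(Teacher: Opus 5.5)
Your injectivity and surjectivity arguments are fine. The overall route, a case analysis of moves driven by Lemma~\ref{lem:props123}, is the same as the paper's, which just appeals to that lemma. However, the well-definedness step has a hole.

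You assert that adding any vertex to a part of size~$2$ is impossible. Your justification only covers a moved vertex that comes from a part of size~$2$ (all cross pairs are edges) or from a part of size~$3$ (degree $n-3$). The remaining move is adding a singleton $\{w\}$ to a part $\{u,v\}$, and you never address it. This is exactly the configuration the fourth clause of Lemma~\ref{lem:props123} leaves open. There, $w$ may be non-adjacent to both $u$ and $v$ when $\mathcal{B}=\mathcal{B}_{\geq k}(G)$ and $\ord{P}=k$. If that move were available, the resulting neighbour would be of none of the five types, and $\psi_P$ would not be defined on it. You say the check uses each clause of Lemma~\ref{lem:props123}, but your enumeration never invokes that fourth clause.

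The fix is as follows. Suppose $w$ is non-adjacent to both $u$ and $v$. Then the first alternative of the fourth clause fails outright. The second also fails, since $w$ has at least two non-neighbours and so cannot have degree $n-2$. Hence $\mathcal{B}=\mathcal{B}_{\geq k}(G)$ and $\ord{P}=k$. Adding $w$ to $\{u,v\}$ would then produce a partition with $k-1$ parts, which is not a vertex of $\mathcal{B}$, so the move does not occur. The paper makes this argument inside the proof of Lemma~\ref{lem:linegraph}.

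This case is excluded by the part-count restriction of $\mathcal{B}_{\geq k}(G)$, not by any degree condition. So your closing remark picks the wrong delicate case. The size-$3$/size-$2$ interaction is routine, since the degree-$(n-3)$ condition handles it directly. The singleton/size-$2$ interaction is the one that genuinely needs the fourth clause.
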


\begin{proof}
    By Lemma~\ref{lem:props123}, every non-edge $uv\in E(\comp{G})$ either lies within a part of~$P$ of size~$3$ or~$2$, or between a part of~$P$ of size~$2$ and a part of~$P$ of size~$1$, or between two parts of~$P$ of size~$1$.
    If $P=P^*$, then in addition, for each non-edge of~$G$, there is exactly one $Q\in N(P)$ such that $\psi_{P^*}(Q)=\{u,v\}$.
\end{proof}

Given two neighbours~$Q$ and~$Q'$ of a vertex $P\in \Omega_3$, the following lemma records the cases where it is possible that~$\psi(Q)$ and~$\psi(Q')$ are incident as edges of~$\comp{G}$.

\begin{lemma}\label{lem:pairsoftypes}
    Let $P\in \Omega_{3}$, and let $Q,Q'\in N(P)$ be distinct neighbours of~$P$, where $\type(Q)\leq \type(Q')$.
    If~$\psi(Q)$ and~$\psi(Q')$ are incident (as edges of~$\comp{G}$), then either $\type(Q)=\type(Q')\in\{1,3,4,5\}$, or $\type(Q)\in\{1,2\}$ and $\type(Q')\in \{3,4\}$.
\end{lemma}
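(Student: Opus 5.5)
The plan is to read off, for each type, where the endpoints of $\psi(Q)$ lie relative to the parts of~$P$, and to check that two edges of~$\comp{G}$ living in incompatible locations cannot share a vertex. By Lemma~\ref{lem:props123}, every part of~$P\in\Omega_3$ has size at most~$3$. The types then give the following locations.
\begin{itemize}
\item Type~$1$: $\psi(Q)$ joins two singleton parts of~$P$.
\item Type~$2$: $\psi(Q)$ is itself a part of~$P$ of size~$2$.
\item Types~$3$ and~$4$: $\psi(Q)=\{v,w\}$ with $\{w\}\in P$ and $v$ in a part $\{u,v\}\in P$. The two types are distinguished by whether $d_G(w)=n-2$.
\item Type~$5$: both endpoints of $\psi(Q)$ lie in a single part of~$P$ of size~$3$.
\end{itemize}
Since $P$ is a partition, an endpoint of $\psi(Q)$ and an endpoint of $\psi(Q')$ can coincide only if they lie in the same part of~$P$, and in particular in parts of the same size. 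We then go through the forbidden pairs $(\type(Q),\type(Q'))\in\{(1,2),(1,5),(2,2),(2,5),(3,4),(3,5),(4,5)\}$.

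\textbf{Pairs involving type~$5$.} The endpoints of a type~$5$ edge lie in parts of size~$3$. For every other type, the endpoints lie in parts of size~$1$ or~$2$. So these edges are disjoint.

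\textbf{Pair $(1,2)$.} A type~$1$ edge lives in singleton parts and a type~$2$ edge lives in a part of size~$2$, so they are disjoint.

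\textbf{Pair $(2,2)$.} Two distinct type~$2$ neighbours come from two distinct parts of size~$2$. Since the parts are disjoint, so are the edges.

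\textbf{Pair $(3,4)$.} This is the only case needing more than bookkeeping. Write $\psi(Q)=\{v,w\}$ with $\{u,v\},\{w\}\in P$ and $d_G(w)=n-2$. Write $\psi(Q')=\{v',w'\}$ with $\{u',v'\},\{w'\}\in P$ and $d_G(w')\neq n-2$. Matching part sizes, a common vertex forces either $w=w'$ or $v=v'$.
\begin{itemize}
\item If $w=w'$, the degree conditions contradict each other.
\item If $v=v'$, then $\{u,v\}=\{u',v'\}$ and $v\neq u$. Now consider the singleton $\{w'\}$ against the part $\{u,v\}$: we know $vw'\notin E(G)$. If $uw'\in E(G)$, there is exactly one edge between $\{u,v\}$ and $\{w'\}$. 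Then Lemma~\ref{lem:degreen-2}, together with $P\in\Omega_3$ satisfying properties~\hyperref[defn:prop1]{$1$} and~\hyperref[defn:prop2]{$2$}, forces $d_G(w')=n-2$, a contradiction.
\end{itemize}

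\textbf{The main obstacle.} The remaining subcase is $v=v'$ with $w'$ adjacent to neither $u$ nor~$v$. By Lemma~\ref{lem:21edges} this can only occur when $\mathcal{B}=\BuG$ and $\ord{P}=k$. For this subcase I would return to the degree count of Lemma~\ref{lem:props123} for $\ord{P}=k$. A non-edge between two singleton parts of~$P$ gives $f_P<f_{P^*}$ strictly, so every $P\in\Omega_3$ in this regime has its singletons pairwise adjacent. I would then try to show that this, combined with $w$ being adjacent to everything except~$v$, forces a strict loss in one of the comparisons in the proof of Lemma~\ref{lem:props123}, contradicting $d(P)=d(P^*)$. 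Failing that, I would exhibit two non-adjacent neighbours of~$P$ built from $u,v,w,w'$ that violate property~\hyperref[defn:prop1]{$1$} or~\hyperref[defn:prop2]{$2$}.

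Finally, once every forbidden pair is excluded, the remaining possibilities are exactly those listed in the statement.
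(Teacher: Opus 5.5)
Your part-size bookkeeping for every pair involving types~$1$, $2$ or~$5$, and the disjointness argument for $(2,2)$, is exactly the paper's proof. The paper's proof stops there, and its concluding deduction does not actually exclude the pair $(3,4)$: both types put the shared vertex in a part of size~$1$ or~$2$. (Its later application, Lemma~\ref{lem:P^*dist2}, treats types~$3$ and~$4$ together, so nothing downstream needs this case.) You were right to single out $(3,4)$, and your reductions there are correct. The case $w=w'$ is impossible. The case $v=v'$ with $uw'\in E(G)$ is excluded by Lemma~\ref{lem:degreen-2}. By Lemma~\ref{lem:21edges}, what remains is $\mathcal{B}=\BuG$, $\ord{P}=k$ and $uw',vw'\notin E(G)$.

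That last subcase, however, is only a plan, so as a proof of the stated lemma your proposal has a genuine gap. Your first suggested route cannot close it, because such a $P$ can satisfy every conclusion of Lemma~\ref{lem:props123} with $d(P)=d(P^*)$. For example, take $n=6$, $E(\comp{G})=\{xy,uv,uw',vw',vw\}$, $k=4$ and $P=\{\{u,v\},\{x,y\},\{w\},\{w'\}\}$. Then $d(P)=d(P^*)=5$, and adding $v$ to $\{w\}$, respectively to $\{w'\}$, gives neighbours of types~$3$ and~$4$ whose $\psi$-values meet at~$v$.

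What rules such a $P$ out is property~1. Apply it to the type~$3$ neighbour $Q$ (add $v$ to $\{w\}$) and the partition $Q''$ obtained by adding $u$ to $\{w'\}$. $Q''$ has $k$ parts, so it lies in $\BuG$. $Q$ and $Q''$ disagree on the status of the disjoint pairs $\{v,w\}$ and $\{u,w'\}$, and one move changes the status of at most one of two disjoint pairs. Hence they are not adjacent, and any common neighbour $R$ either puts both pairs together or separates both.

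\begin{itemize}
\item If $R$ puts both pairs together, it is obtained from $P$ by replacing $\{u,v\},\{w\},\{w'\}$ with $\{v,w\},\{u,w'\}$. This has $k-1$ parts, so it is not in $\BuG$.
\item If $R$ separates both pairs, it is obtained from $Q$ by moving some $a\in\{v,w\}$ and from $Q''$ by moving some $b\in\{u,w'\}$. Since $b$ is a singleton in $Q$ and $a$ is a singleton in $Q''$, we get $R(a),R(b)\subseteq\{a,b\}$.
\end{itemize}

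The second case leaves only four possibilities. If $\{a\},\{b\}\in R$, then $R$ is the $u$-split neighbour of $P$. If $\{a,b\}=\{u,v\}$, then $R=P$. If $\{a,b\}=\{v,w'\}$, then $R=Q'$. Finally, $\{a,b\}\in\{\{u,w\},\{w,w'\}\}$ is impossible, since $w$ is adjacent to every vertex except~$v$. So $Q$ and $Q''$ have no common neighbour outside $N[P]$, contradicting property~1. With this step added, your proof is complete.
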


\begin{proof}
    Let $v\in\psi(Q)\cap \psi(Q')$.
    If $\type(Q)=1$ or $\type(Q')=1$ then we have $\ord{P(v)}=1$.
    If $\type(Q)=2$ or $\type(Q')=2$ then we have $\ord{P(v)}=2$.
    If $\type(Q)\in \{3,4\}$ or $\type(Q')\in \{3,4\}$ then we have $\ord{P(v)}\in \{1,2\}$.
    Finally, if $\type(Q)=5$ or $\type(Q')=5$ then we have $\ord{P(v)}=3$.
    Therefore, we know that either $\type(Q)=\type(Q')$, or $\type(Q)\in\{1,2\}$ and $\type(Q')\in \{3,4\}$.
    Moreover, if $\type(Q)=\type(Q')=2$, then we have $\psi(Q),\psi(Q')\in P$, so~$\psi(Q)$ and~$\psi(Q')$ are disjoint, a contradiction.
\end{proof}

\begin{lemma}\label{lem:linegraph}
     Let $P\in \Omega_{3}$, and let $Q,Q'\in N(P)$ be distinct neighbours of~$P$.
     If $QQ'\in E(\mathcal{B})$, then~$\psi(Q)$ and~$\psi(Q')$ are incident as edges of~$\comp{G}$.
\end{lemma}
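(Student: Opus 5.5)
We argue by contraposition. Suppose $Q,Q'\in N(P)$ are adjacent in~$\mathcal{B}$. We show that $\psi(Q)$ and $\psi(Q')$ share a vertex of~$G$.

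The key tool is this. If $Q'$ is obtained from $Q$ by moving a single vertex~$a$, then $Q$ and $Q'$ agree on every other vertex. Write $a_Q$ for the vertex moved to get from~$P$ to~$Q$, and $a_{Q'}$ for the vertex moved to get from~$P$ to~$Q'$. Let $M(Q)$ be the set of vertices whose part changes between $P$ and~$Q$; define $M(Q')$ similarly.

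\emph{Step 1.} For each type, the natural moved vertex lies in $\psi(Q)$:
\begin{itemize}
\item type~$1$: $a_Q$ is $u$ or $v$, and $\psi(Q)=\{u,v\}$;
\item type~$2$: $a_Q$ is $u$ or $v$, and $\psi(Q)=\{u,v\}$;
\item types~$3$ and~$4$: $a_Q=v$, since $v$ is added to $\{w\}$, and $\psi(Q)=\{v,w\}$;
\item type~$5$: we may take $a_Q=u$, the vertex split off; here $\psi(Q)=\{v,w\}\not\ni u$.
\end{itemize}
So type~$5$ needs separate handling.

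\emph{Step 2 (main route).} Consider the four partitions $P,Q,Q'$ and the edge $QQ'$. If $a_Q=a_{Q'}$, Observation~\ref{obs:vopenneighbourhoodclique} says $Q'$ is obtained from $Q$ by moving this same vertex, which lies in both $\psi$-values for types $1$ to~$4$. Otherwise $Q'$ is obtained from $Q$ by moving some vertex~$b$. Every vertex outside $\{a_Q,a_{Q'},b\}$ has the same part structure in all three partitions, and the part changes must be consistent. I will do a case analysis on the unordered pair of types. Lemma~\ref{lem:pairsoftypes}'s proof suggests the following checks: for each pair of types whose $\psi$-edges are disjoint, exhibit a vertex pair that is together in one of $Q,Q'$ but separated in the other, or a part difference that cannot be repaired by moving one vertex. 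This is the same style of argument as in the first claim of Lemma~\ref{lem:type1neighbours}.

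Typical cases:
\begin{itemize}
\item \emph{Type~$1$/type~$1$} is Lemma~\ref{lem:type1neighbours}.
\item \emph{Type~$2$ with $\{u,v\}$ and type~$1$ with disjoint $\{x,y\}$.} $Q$ contains $\{u\},\{v\},\{x\},\{y\}$ while $Q'$ contains $\{u,v\},\{x,y\}$. Two parts differ in a way needing two moves, so they are not adjacent.
\item \emph{Cases involving a size-$3$ part $\{u,v,w\}$.} Use that $Q$ has $\{u\},\{v,w\}$. A neighbour $Q'$ of a different type keeps $\{u,v,w\}$ intact unless it also splits that triple. Two different splits of the triple give $\psi$-values $\{v,w\}$ and $\{u,w\}$, which are incident. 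In type~$5$ all three $\psi$-edges lie inside the triple, so any two are incident; thus type~$5$ versus type~$5$ is automatic.
\end{itemize}

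A cleaner uniform formulation is to check: for every pair of disjoint non-edges $e=\psi(Q)$ and $f=\psi(Q')$, the symmetric difference of the "same part" relations of $Q$ and $Q'$ is too large to come from a single move. A single move changes the relation only on pairs containing the moved vertex. So it suffices to find two disjoint pairs whose relation changes.

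\emph{Step 3.} Carry out this disjoint-pairs check. For most type combinations, the pair $e$ has its relation changed relative to~$P$ in~$Q$ but not in~$Q'$, and the pair $f$ changes in~$Q'$ but not in~$Q$. Two disjoint pairs both change between $Q$ and $Q'$, so $Q$ and $Q'$ are not adjacent.

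The exceptions are types $2$, $3$, $4$ and~$5$, where $\psi$ is not the only changed pair. For example, for type~$3$/$4$ the pair $\{u,v\}$ also changes. In these cases I will pick alternative disjoint changed pairs, using Lemma~\ref{lem:props123} to restrict the possible configurations. I expect this bookkeeping — guaranteeing a disjoint pair of changed pairs when the "extra" changed pairs of types $3$ to~$5$ overlap the other $\psi$-edge — to be the main obstacle.
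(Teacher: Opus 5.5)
\textbf{Verdict: your strategy is correct and genuinely different from the paper's.} You leave the case check unexecuted, but it closes in a few lines, and your Step~3 misdescribes type~$5$.

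\textbf{How the paper argues.} It does not split into pairs of types. If $\psi(Q)$ and $\psi(Q')$ are disjoint, then each of their four endpoints $x$ has $Q(x)\neq Q'(x)$. Moving a single vertex $a$ changes the part of exactly the vertices in $Q(a)\cup Q'(a)$, so $\ord{Q(a)}+\ord{Q'(a)}\geq 5$. Every part of a neighbour of $P$ has size at most~$3$, and every part of size~$3$ is already a triple of $P$: no singleton is ever added to a pair, by Lemma~\ref{lem:props123} and the part count when $\mathcal{B}=\BuG$, $\ord{P}=k$. Hence one of the two parts is a triple of $P$, whose vertices have degree $n-3$. But then $a$ has at least three non-neighbours, a contradiction.

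\textbf{What your route trades.} Your criterion is that a move of $b$ changes the same-part relation only on pairs containing $b$, so two disjoint pairs on which $Q$ and $Q'$ disagree forbid adjacency. This replaces the size-and-degree argument with a mechanical check on the set $D(Q)$ of pairs whose relation changes from $P$ to $Q$. It needs no degree information beyond what the five types encode. It does need the type list to be exhaustive, in particular that no singleton is ever added to a pair. That is the well-definedness of $\psi_P$ (Lemma~\ref{lem:psibij}), and you should cite it explicitly, since the paper spells out the key point only inside its proof of this very lemma.

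\textbf{Closing the case check.} Write type~$3$/$4$ as $\{u,v\},\{w\}\to\{u\},\{v,w\}$ and type~$5$ as $\{u,v,w\}\to\{u\},\{v,w\}$. Then:
\begin{itemize}
\item for types~$1$ and~$2$, $D(Q)=\{\psi(Q)\}$, so type~$2$ is not an exception as your Step~3 claims;
\item for types~$3$ and~$4$, $D(Q)=\{\{u,v\},\{v,w\}\}$;
\item for type~$5$, $D(Q)=\{\{u,v\},\{u,w\}\}$, which does not contain $\psi(Q)=\{v,w\}$, because $v$ and $w$ are together in both $P$ and $Q$. So your assertion that $\psi(Q)$ is a changed pair fails here.
\end{itemize}

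\emph{Both of types $1$--$4$.} Every pair in $D(Q)$ meets $\psi(Q)$. So if $\psi(Q)\cap\psi(Q')=\emptyset$, then $\psi(Q)\notin D(Q')$ and $\psi(Q')\notin D(Q)$. The two $\psi$-edges themselves are then disjoint pairs on which $Q$ and $Q'$ disagree. The extra pair $\{u,v\}$ of a type-$3$/$4$ neighbour can never coincide with the other $\psi$-edge, because it meets its own $\psi$-edge. This is exactly the obstacle you anticipated, and it does not arise.

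\emph{One of type $5$.} Say $Q$ has type~$5$ on a triple $T$. Every pair of $D(Q)$ lies inside $T$. Every pair of $D(Q')$ is disjoint from $T$, whether $Q'$ has type at most~$4$ or type~$5$ on another triple; on the same triple the $\psi$-edges meet. So $\{u,v\}$ together with any pair of $D(Q')$ does the job.
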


\begin{proof}
    Suppose~$\psi(Q)$ and~$\psi(Q')$ are not incident.
    Then, by definition of~$\psi$, there are at least four vertices~$v$ such that $Q(v)\neq Q'(v)$.
    Suppose $QQ'\in E(\mathcal{B})$.
    Then~$Q'$ must be obtained from~$Q$ by moving a vertex such that $\ord{Q(v)\cup Q'(v)}\geq 4$, i.e.\ such that $\ord{Q(v)}+\ord{Q'(v)}\geq 5$.
    But every part of~$P$ has size at most~$3$, and there is at least one edge between each part of size~$3$ and every other part of~$P$.
    So every part of~$Q$ has size at most~$3$.
    Therefore, without loss of generality $\ord{Q(v)}=3$ and $\ord{Q'(v)}\in \{2,3\}$.
    Hence~$v$ has degree at most $n-4$ and belongs to a part of size~$3$ in~$Q$.
    But by Lemma~\ref{lem:props123}, it is not possible that~$Q$ is obtained from~$P$ by adding a vertex to a part of size~$2$, since any vertex not adjacent to both vertices in a part of~$P$ of size~$2$ belongs to a part of~$P$ of size~$1$, and if such a vertex exists, we have $\mathcal{B}=\BuG$ with $\ord{P}=k$, meaning such a neighbour would have $k-1$ parts, a contradiction.
    Therefore, since every part of~$P$ has size at most~$3$, every part of size~$3$ in~$Q$ is a part of size~$3$ in~$P$.
    Therefore,~$v$ belongs to a part of size~$3$ in~$P$.
    Therefore~$v$ has degree~$n-3$ by Lemma~\ref{lem:props123}, a contradiction.
    So $QQ'\notin E(\mathcal{B})$.
\end{proof}

\begin{lemma}\label{lem:linegraph2}
    The following statements hold.
    \begin{enumerate}
        \item We have $P^*\in \Omega_4$.
        \item Let $P\in \Omega_4$.
        Let $Q_1,Q_2\in N(P)$ be distinct neighbours of~$P$.
        Then $Q_1Q_2\in E(\mathcal{B})$ if and only if~$\psi(Q_1)$ and~$\psi(Q_2)$ are incident as edges of~$\comp{G}$.
        Moreover,~$\psi_P$ is bijective, and~$\mathcal{N}(P)$ is isomorphic to the line graph~$L(\comp{G})$.
    \end{enumerate}
\end{lemma}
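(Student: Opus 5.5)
The plan is to compare $N_P$ with $N_{P^*}$ through the injection $\psi_P$, using Lemma~\ref{lem:linegraph} as the key input. By Lemma~\ref{lem:props123}, $P^*\in\Omega_3$. By Lemma~\ref{lem:psibij}, $\psi_{P^*}$ is a bijection $N(P^*)\to E(\comp{G})$. Lemma~\ref{lem:type1neighbours} applies because every neighbour of $P^*$ is of type~$1$; it says that two neighbours of $P^*$ are adjacent exactly when their $\psi$-images are incident. Hence $\psi_{P^*}$ is an isomorphism $\mathcal{N}(P^*)\cong L(\comp{G})$, and $N_{P^*}=\ord{V(L(\comp{G}))}+\ord{E(L(\comp{G}))}$.

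Now take any $P\in\Omega_3$. By Lemma~\ref{lem:psibij}, $\psi_P$ is injective, so $\ord{N(P)}\le \ord{E(\comp{G})}$. By Lemma~\ref{lem:linegraph}, every edge $QQ'$ of $\mathcal{N}(P)$ maps to a pair of incident edges of $\comp{G}$, that is, to an edge of $L(\comp{G})$. Since $\psi_P$ is injective, this map on edges is injective too. Thus $\psi_P$ embeds $\mathcal{N}(P)$ as a (not necessarily induced) subgraph of $L(\comp{G})$, and $N_P\le N_{P^*}$. Since $P^*\in\Omega_3$ attains the bound, $\max_{P'\in\Omega_3}N_{P'}=N_{P^*}$, so $P^*\in\Omega_4$. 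This proves the first statement.

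For the second statement, let $P\in\Omega_4$. Then $N_P=N_{P^*}$, and equality in the counting above forces two things. First, $\ord{N(P)}=\ord{E(\comp{G})}$, so $\psi_P$ is a bijection. Second, the number of edges of $\mathcal{N}(P)$ equals the number of edges of $L(\comp{G})$. The edge map is injective between finite sets of equal size, so it is surjective. Hence every pair $Q_1,Q_2$ whose $\psi$-images are incident is adjacent in $\mathcal{B}$. Combined with Lemma~\ref{lem:linegraph}, this gives the ``if and only if'' statement, and $\psi_P$ is an isomorphism $\mathcal{N}(P)\cong L(\comp{G})$.

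The only step needing care is the inequality $N_P\le N_{P^*}$. It relies on two facts: $\psi_P$ is injective on vertices (Lemma~\ref{lem:psibij}), and it sends edges to edges (Lemma~\ref{lem:linegraph}). Both facts are already available. So the main point is to phrase the argument as counting vertices plus edges of a subgraph embedding. That requires no structural case analysis, since the case analysis was done in Lemmas~\ref{lem:pairsoftypes} and~\ref{lem:linegraph}.
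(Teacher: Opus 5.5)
Your proof is correct and follows the paper's argument. Both use the injective map $\psi_P$ (Lemmas~\ref{lem:psibij} and~\ref{lem:linegraph}) to embed $\mathcal{N}(P)$ as a subgraph of $L(\comp{G})\cong\mathcal{N}(P^*)$ (Lemma~\ref{lem:type1neighbours}), and then use maximality of $N_P$ to force $\psi_P$ to be bijective and to give the reverse implication. Your separate counting of vertices and edges, and your explicit check that $P^*\in\Omega_3$ via Lemma~\ref{lem:props123}, spell out steps the paper states more briefly.
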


\begin{proof}
    We will prove both statements simultaneously.
    By Lemma~\ref{lem:type1neighbours}, we know that two neighbours $Q,Q'\in N(P^*)$ are adjacent in $\mathcal{B}$ if and only if $\psi_{P^*}(Q)$ and $\psi_{P^*}(Q')$ are incident as edges of $\comp{G}$.
    Since~$\psi_{P^*}$ is bijective by Lemma~\ref{lem:psibij}, it follows that $\mathcal{N}(P^*)\cong L(\comp{G})$ by definition of a line graph.

    Let $P\in \Omega_3$ be given, and let $Q_1,Q_2\in N(P)$ be distinct neighbours of~$P$.
    By Lemma~\ref{lem:linegraph} we know that if $Q_1Q_2\in E(\mathcal{B})$, then~$\psi(Q_1)$ and~$\psi(Q_2)$ are incident.
    Therefore, since~$\psi_P$ is injective by Lemma~\ref{lem:psibij}, we know that~$\mathcal{N}(P)$ is isomorphic to a subgraph of the line graph~$L(\comp{G})$.
    It follows that $N_P\leq N_{P^*}$.
    Therefore, we have $P^*\in \Omega_4$.
    Moreover, if~$\psi_P$ is not surjective, or if~$\psi(Q_1)$ and~$\psi(Q_2)$ are incident and $Q_1Q_2\notin E(\mathcal{B})$, then~$\mathcal{N}(P)$ is isomorphic to a proper subgraph of~$L(\comp{G})$.
    Since $\mathcal{N}(P^*)\cong L(\comp{G})$, this means that $N_P<N_{P^*}$, which means that $P\notin \Omega_4$.
    So for every $P\in \Omega_4$, we know that~$\psi_P$ is bijective, and that if~$\psi(Q_1)$ and~$\psi(Q_2)$ are incident, then $Q_1Q_2\in E(\mathcal{B})$.
    In particular, $\mathcal{N}(P)\cong L(\comp{G})$ for every $P\in \Omega_4$ by definition of a line graph.
\end{proof}

We now rule out a case where certain vertices $P\in \Omega_3$ do not satisfy $\mathcal{N}(P)\cong L(\comp{G})$.

\begin{lemma}\label{lem:notype1with34}
    Let $P\in \Omega_{3}$, and let $Q_1,Q_2\in N(P)$ be neighbours of~$P$.
    If~$\psi(Q_1)$ and~$\psi(Q_2)$ are incident and $\type(Q_1)=1$ and $\type(Q_2)\in \{3,4\}$, then we have $P\notin \Omega_4$.
\end{lemma}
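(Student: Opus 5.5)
The plan is to show that $Q_1$ and $Q_2$ are \emph{not} adjacent in~$\mathcal{B}$, even though $\psi(Q_1)$ and $\psi(Q_2)$ are incident. By the second statement of Lemma~\ref{lem:linegraph2}, every $P\in\Omega_4$ has the property that incident images under~$\psi_P$ force adjacency in~$\mathcal{B}$. So this will immediately give $P\notin\Omega_4$.

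First I will pin down the configuration.

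Since $\type(Q_1)=1$, there are parts $\{a\},\{b\}\in P$ such that $Q_1$ is obtained from~$P$ by merging them, and $\psi(Q_1)=\{a,b\}$.

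Since $\type(Q_2)\in\{3,4\}$, there are parts $\{u,v\},\{w\}\in P$ such that $Q_2$ is obtained by replacing them with $\{u\},\{v,w\}$, and $\psi(Q_2)=\{v,w\}$.

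Now consider the shared vertex of $\psi(Q_1)$ and $\psi(Q_2)$. It lies in a part of~$P$ of size~$1$, because $a$ and~$b$ do. Since $v$ lies in a part of size~$2$, the shared vertex must be~$w$, and by symmetry we may take $w=a$.

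Hence $u,v,w,b$ are four distinct vertices. The parts of~$P$ containing them are $\{u,v\}$, $\{w\}$ and~$\{b\}$, and every other part of~$P$ is also a part of both $Q_1$ and~$Q_2$.

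Restricted to $\{u,v,w,b\}$, the two partitions are:
\begin{itemize}
\item $Q_1$ induces $\{u,v\},\{w,b\}$;
\item $Q_2$ induces $\{u\},\{v,w\},\{b\}$.
\end{itemize}

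Next I will argue that no single move turns $Q_1$ into~$Q_2$. Suppose $Q_2$ were obtained from~$Q_1$ by moving some vertex~$x$.

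The four vertices $u,v,w,b$ lie in different parts of~$Q_1$ and~$Q_2$, whereas every vertex outside $\{u,v,w,b\}$ lies in the same part of~$Q_1$ and of~$Q_2$. Moving a vertex from outside this set would change that vertex's own part. Therefore $x\in\{u,v,w,b\}$.

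After moving~$x$, the restriction of~$Q_2$ to $\{u,v,w,b\}$ would be obtained from $\{u,v\},\{w,b\}$ in one of two ways:
\begin{itemize}
\item $x$ becomes a singleton there (when it moves to a new part, or to a part outside these four vertices); or
\item $x$ joins the other block.
\end{itemize}

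There are only eight candidate outcomes:
\begin{itemize}
\item $\{v\},\{u,w,b\}$ and $\{v\},\{u\},\{w,b\}$;
\item $\{u\},\{v,w,b\}$ and $\{u\},\{v\},\{w,b\}$;
\item $\{u,v,w\},\{b\}$ and $\{u,v\},\{w\},\{b\}$;
\item $\{u,v,b\},\{w\}$ and $\{u,v\},\{b\},\{w\}$.
\end{itemize}
None of these equals $\{u\},\{v,w\},\{b\}$. Hence $Q_1Q_2\notin E(\mathcal{B})$.

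Finally, I will conclude. If $P\in\Omega_4$, then Lemma~\ref{lem:linegraph2} would force $Q_1Q_2\in E(\mathcal{B})$, because $\psi(Q_1)$ and~$\psi(Q_2)$ are incident. This contradicts the previous step, so $P\notin\Omega_4$.

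The only delicate point I expect is the case analysis of single moves. In particular, I must justify that a move of a vertex outside $\{u,v,w,b\}$ cannot help, and that moving one of the four vertices to an outside part looks like creating a singleton on the restriction. I would handle both by the restriction argument above.
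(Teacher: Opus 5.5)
Your proposal is correct and follows the paper's proof. You identify the shared vertex as the singleton that receives the moved vertex, show that $Q_1$ and $Q_2$ are non-adjacent, and conclude via Lemma~\ref{lem:linegraph2}. The paper settles non-adjacency in one line: in your notation, it notes that $\{u,v\},\{w,b\}\in Q_1$ while $\{v,w\}\in Q_2$. Your enumeration of the eight possible single moves is a more explicit check of the same fact.
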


\begin{proof}
    We claim that~$Q_1$ and~$Q_2$ are not adjacent in~$\mathcal{B}$.
    In this case, we know that~$\mathcal{N}(P)$ is a proper subgraph of~$L(\comp{G})$ by Lemma~\ref{lem:linegraph}, so  $P\notin \Omega_4$ by Lemma~\ref{lem:linegraph2}.

    It remains to prove the claim.
    Let~$u$ denote the common vertex of~$\psi(Q_1)$ and~$\psi(Q_2)$.
    Then~$Q_1$ is obtained from~$P$ by merging the part $\{u\}\in P$ with another part $\{v\}\in P$, and~$Q_2$ is obtained from~$P$ by adding a vertex~$w$ from a part $\{w,x\}\in P$ to~$\{u\}$.
    Then we have $\{u,v\},\{w,x\}\in Q_1$, while $\{u,w\}\in Q_2$.
    So~$Q_1$ and~$Q_2$ are not adjacent in~$\mathcal{B}$.
\end{proof}

Lemma~\ref{lem:linegraph2} allows us to determine the line graph $L(\comp{G})$ from $\mathcal{B}$.
It remains to show that we can distinguish between claws and triangles in~$\comp{G'}$.
In order to do this, we will examine the neighbourhood of the vertices $P\in \Omega_4$ at distance~$2$ from~$P$, as discussed in the outline at the beginning of this subsection.

\begin{lemma}\label{lem:P^*dist2}
    Let $P\in \Omega_{4}$.
    Let $\{Q_1,Q_2,Q_3\}\subseteq N(P)$ be the vertex set of a triangle in~$\mathcal{B}$.
    If $\psi(Q_1)$,~$\psi(Q_2)$, and~$\psi(Q_3)$ are the three edges of a claw in~$\comp{G}$, then $Q_1$,~$Q_2$, and~$Q_3$ do not have any common neighbour outside of~$N[P]$.
\end{lemma}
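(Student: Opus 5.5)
The plan is to reduce to a small number of type configurations and then, in each one, rerun the argument from the second claim of Lemma~\ref{lem:type1neighbours}, viewing $Q_1,Q_2,Q_3$ as neighbours of the hypothetical common neighbour~$R$. Suppose $\psi(Q_1),\psi(Q_2),\psi(Q_3)$ are the edges $uv_1,uv_2,uv_3$ of a claw in~$\comp{G}$ centred at~$u$, and suppose for contradiction that $R\notin N[P]$ is adjacent to all three~$Q_i$.

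\textbf{Step 1: restrict the types.} Since the $\psi(Q_i)$ are pairwise incident, Lemma~\ref{lem:pairsoftypes} applies to each pair. Lemma~\ref{lem:notype1with34} together with $P\in\Omega_4$ excludes a type~$1$ neighbour paired with a type~$3$ or~$4$ neighbour. Type~$2$ edges are parts of~$P$ and hence pairwise disjoint, so at most one $Q_i$ has type~$2$. Three type~$5$ edges would lie inside a single part of size~$3$, so they would form a triangle rather than a claw. This leaves four cases:
\begin{enumerate}
\item all three $Q_i$ have type~$1$;
\item all three have type~$3$;
\item all three have type~$4$;
\item one has type~$2$ and the other two have a common type in $\{3,4\}$.
\end{enumerate}

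\textbf{Step 2: locate the moved vertex.} For each $i$, let $S_i$ be the set of vertices whose part changes between~$P$ and~$Q_i$. These are the vertices of $\psi(Q_i)$, plus the partner~$u'$ of the moved vertex in the type~$3$ and~$4$ cases. Since $Q_1$ is a neighbour of~$R$, write $Q_1$ as obtained from~$R$ by moving some vertex~$a$. By Observation~\ref{obs:identicalneighbours}, applied at~$R$ to the pairs $(Q_1,Q_2)$ and $(Q_1,Q_3)$, we need $Q_1(a)\neq Q_2(a)$ and $Q_1(a)\neq Q_3(a)$. Hence $a$ lies in $(S_1\cup S_2)\cap(S_1\cup S_3)$, which is a small explicit set containing~$u$ and $v_1$ (and possibly partners).

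By Observation~\ref{obs:vopenneighbourhoodclique}, $a$ cannot be the vertex whose move produces~$Q_1$ from~$P$, since otherwise $R\in N[P]$. In case~(1) this finishes the argument exactly as in Lemma~\ref{lem:type1neighbours}, because $Q_1$ arises from~$P$ by moving either~$u$ or~$v_1$.

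\textbf{Step 3: the remaining cases.} In cases~(2)--(4) the partner vertices (the vertex left behind in a size-$2$ part, or the type~$2$ part $\{u,v_j\}$) enlarge the candidate set for~$a$. Here I would enumerate the surviving candidates~$a$ directly and check, for each, one of two things:
\begin{itemize}
\item the resulting~$R$ is still obtainable from~$P$ by a single move, so $R\in N[P]$;
\item or $R$ fails to be adjacent to some $Q_j$, because the parts of $u$ and $v_j$ in~$R$ and~$Q_j$ differ at two vertices.
\end{itemize}
For this I would use the structural facts of Lemma~\ref{lem:props123}, in particular that distinct size-$2$ parts are completely joined in~$G$. I expect the main obstacle to be case~(4), and case~(2) where the centre~$u$ is the degree-$(n-2)$ singleton. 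There the three neighbours move different vertices, so the clean ``same moved vertex'' argument of case~(1) fails. I would try first to show that $a$ must equal~$u$ and that moving~$u$ out of its new part forces $R$ to disagree with one $Q_j$ on two parts.
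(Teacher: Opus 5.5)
\textbf{Verdict: genuine gap.} Your framework cannot close this lemma as planned. The gap sits exactly at the configuration that makes the lemma nontrivial.

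\textbf{The faulty step.} Your Step~2 restricts the moved vertex $a$ using Observation~\ref{obs:identicalneighbours}. That observation fails when $Q_1$ arises from $R$ by splitting a two-element part $\{a,x\}$ of $R$: every neighbour of $R$ obtained by moving $x$ elsewhere then also contains the part $\{a\}$.

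\textbf{A concrete bad $R$ in your case~(4).} Let $\{u,v\}\in P$ and let $Q_1$ be the $u$-split neighbour of $P$. Let $Q_2,Q_3$ be obtained from $P$ by adding $u$ to $\{w_2\}$ and to $\{w_3\}$ respectively. Let $R$ be obtained from $P$ by replacing $\{u,v\},\{w_2\},\{w_3\}$ with $\{u\},\{v\},\{w_2,w_3\}$. Suppose $w_2w_3\notin E(G)$. Then:
\begin{itemize}
\item $R$ is an independent set partition with $\ord{P}$ parts, so it lies in $\mathcal{B}$ even when $\ord{P}=k$;
\item $R$ is adjacent to $Q_1$ (merge $\{w_2\},\{w_3\}$), to $Q_2$ (move $w_2$ into $\{w_3\}$) and to $Q_3$ (move $w_3$ into $\{w_2\}$);
\item $R\notin N[P]$.
\end{itemize}
With this $Q_1$ as reference, your set $(S_1\cup S_2)\cap(S_1\cup S_3)=\{u,v\}$ wrongly discards $R$. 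With a type-$3$/$4$ neighbour as reference, an honest enumeration does find $R$. But neither of your two checks ($R\in N[P]$, or $R$ non-adjacent to some $Q_j$) excludes it.

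\textbf{The missing ingredient.} The two singletons receiving the centre must be adjacent in $G$.
\begin{itemize}
\item If one of $Q_2,Q_3$ has type~$3$, say $w_2$ has degree $n-2$, then its only non-neighbour is $u$.
\item If both have type~$4$, then $\mathcal{B}=\BuG$ with $\ord{P}=k$, so no two singletons of $P$ can be merged. A non-edge between two singletons could only be the $\psi_P$-value of a type-$1$ neighbour. Since $P\in\Omega_4$, $\psi_P$ is bijective by Lemma~\ref{lem:linegraph2}, so no such non-edge exists.
\end{itemize}
Lemma~\ref{lem:props123}, which you planned to use, says nothing about pairs of singletons. The paper's step ``$\{w_3\}\in R$'' in the all-type-$4$ case tacitly relies on this same fact.

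\textbf{How the paper proceeds.} It avoids Observation~\ref{obs:identicalneighbours} in these cases.
\begin{enumerate}
\item It first shows the centre of the claw is the vertex $u$ moved out of a two-element part. Around a singleton $w$, at most two mutually adjacent type-$3$/$4$ neighbours exist: those moving the two vertices of one part into $\{w\}$. Type-$2$ values avoid singletons.
\item Hence all three $Q_i$ are obtained by moving $u$. In particular, in case~(4) they do not move different vertices, contrary to your expectation.
\item Let $R\notin N[P]$ be adjacent to $Q_2$ and $Q_3$. Then $R$ does not move $u$ (Observation~\ref{obs:vopenneighbourhoodclique}). Also $w_3$ cannot join $\{u,w_2\}$, because of an edge or the part count.
\item So $R$ arises from $Q_2$ by moving $w_2$ and from $Q_3$ by moving $w_3$. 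Together with $w_2w_3\in E(G)$, this forces $R$ to be the $u$-split of $P$, so $R\in N[P]$.
\end{enumerate}
The obstacle you anticipated, a degree-$(n-2)$ singleton centre, is vacuous anyway: such a vertex has only one non-neighbour, so it cannot be a claw centre.

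\textbf{Minor point.} Your case list omits mixed types such as $2,3,4$ or $3,3,4$. Lemma~\ref{lem:pairsoftypes} as stated excludes a type-$3$/type-$4$ pair, but its proof does not. The paper's proof handles $\type(Q_2),\type(Q_3)\in\{3,4\}$ jointly.
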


\begin{proof}
    Without loss of generality, we may assume that $\type(Q_1)\leq \type(Q_2)\leq \type(Q_3)$.

    If $\type(Q_1)=\type(Q_2)=\type(Q_3)=1$, then the result follows immediately from Lemma~\ref{lem:type1neighbours}.
    By Lemma~\ref{lem:notype1with34}, if we have $\type(Q_1)=1$ and either $\type(Q_2)=4$ or $\type(Q_3)=4$, then we know that $P\notin \Omega_4$, a contradiction.
    If we have $\type(Q_1)=\type(Q_2)=\type(Q_3)=5$, then by definition, $\psi(Q_1)$, $\psi(Q_2)$, and~$\psi(Q_3)$ are the three edges of a triangle in~$\comp{G}$, a contradiction.

    Therefore, by Lemma~\ref{lem:pairsoftypes}, it remains only to consider the case where $\type(Q_1)\in\{2,3,4\}$ and $\type(Q_2),\type(Q_3)\in \{3,4\}$.
    Then,~$Q_2$ is obtained from~$P$ by adding a vertex~$u_2$ from a part $\{u_2,v_2\}\in P$ to a part $\{w_2\}\in P$, so that $\psi(Q_2)=\{u_2,w_2\}$.
    Let $Q'\in N(P)\cap N(Q_2)$ be any common neighbour of~$P$ and~$Q_2$ such that $\type(Q')\in \{3,4\}$.
    Then~$Q'$ is obtained from~$P$ by adding a vertex~$u'$ from a part $\{u',v'\}\in P$ to a part $\{w'\}\in P$, so that $\psi(Q')=\{u',w'\}$.
    By Lemma~\ref{lem:linegraph},~$\psi(Q')$ shares a vertex with~$\psi(Q_2)$, so either $u_2=u'$ or $w_2=w'$.
    Suppose that $w_2=w'$ and $\psi_P(Q')\neq \{v_2,w_2\}$.
    Then $\{u_2,w_2\}$ is a part of~$Q_2$, whereas~$u_2$ and~$w_2$ belong to distinct parts of $Q'$, each of size~$2$.
    So~$Q_2$ is not adjacent to~$Q'$, a contradiction.
    So either $u_2=u'$, or both $w_2=w'$ and $\psi_P(Q')=\{v_2,w_2\}$.
    Since $\psi(Q_1)$,~$\psi(Q_2)$, and~$\psi(Q_3)$ are distinct, this means that if~$w_2$ is the common vertex of $\psi(Q_1)$,~$\psi(Q_2)$, and~$\psi(Q_3)$, then we cannot have both $\type(Q_1),\type(Q_3)\in \{3,4\}$.
    Therefore we have $\type(Q_1)=2$ in this case, a contradiction as $\{w_2\}\in P$ is a part of size~$1$.
    So~$u_2$ is the common vertex of $\psi(Q_1)$,~$\psi(Q_2)$, and~$\psi(Q_3)$.
    So~$Q_3$ is obtained from~$P$ by adding~$u_2$ to some part $\{w_3\}\in P$, where $w_3\neq w_2$.

    Now suppose $R\notin N[P]$ is a common neighbour of~$Q_2$ and~$Q_3$.
    Since $R\notin N[P]$,~$R$ is not obtained from~$Q_2$ or~$Q_3$ by moving~$u_2$ by Observation~\ref{obs:vopenneighbourhoodclique}.
    If any of $Q_1$,~$Q_2$, or~$Q_3$ have type~$4$, then $\ord{P}=k$.
    Otherwise, $\type(Q_2)=\type(Q_3)=3$, so~$w_2$ and~$w_3$ each have degree $n-2$ in~$G$, and so are adjacent to each other in~$G$.
    So in either case,~$R$ is not obtained from~$Q_2$ by adding~$w_3$ to $\{u_2,w_2\}$, and~$R$ is not obtained from~$Q_3$ by adding~$w_2$ to $\{u_2,w_3\}$.
    So $R(w_2)\neq R(u_2)$ and $R(w_3)\neq R(u_2)$.
    So~$R$ is obtained from~$Q_2$ by moving~$w_2$, and~$R$ is obtained from~$Q_3$ by moving~$w_3$.
    The former statement means that $\{w_3\}\in R$, so by the latter statement,~$R$ is obtained from~$Q_3$ by splitting~$w_3$.
    But this is a contradiction, since then~$R$ is obtained from~$P$ by splitting~$u_2$, meaning $R\in N[P]$.
    So no such~$R$ exists.
\end{proof}

So far, every proof in this section holds for $\mathcal{B}\in \{\mathcal{B}(G),\mathcal{B}_{\geq k}(G)\}$ with $k\leq n-1$.
However, for the rest of this section, we will further suppose that $k\leq n-2$.

\begin{lemma}\label{lem:P^*dist2_2}
    Suppose $k\leq n-2$.
    Let $\{Q_1,Q_2,Q_3\}\subseteq N(P^*)$ be the vertex set of a triangle in~$\mathcal{B}$.
    If $\psi(Q_1)$, $\psi(Q_2)$, and $\psi(Q_3)$ are the three edges of a triangle in~$\comp{G}$, then $Q_1$,~$Q_2$, and~$Q_3$ have a common neighbour $R\notin N[P^*]$.
\end{lemma}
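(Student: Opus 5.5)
Since all neighbours of $P^*$ are of type~$1$, write $\psi(Q_i)=e_i$ with $e_1=\{a,b\}$, $e_2=\{b,c\}$, $e_3=\{a,c\}$, where $\{a,b,c\}$ is a triangle in $\comp{G}$, so $\{a,b,c\}$ is an independent set of $G$. Then $Q_1$, $Q_2$, $Q_3$ are obtained from $P^*$ by merging $\{a\},\{b\}$, then $\{b\},\{c\}$, then $\{a\},\{c\}$ respectively. The natural candidate for the common neighbour is the partition $R$ obtained from $P^*$ by replacing $\{a\},\{b\},\{c\}$ with the single part $\{a,b,c\}$, leaving all other parts as singletons.

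First I would check that $R$ is an independent set partition. This holds because $\{a,b,c\}$ is independent in $G$. Next I would check that $R$ is adjacent to each $Q_i$. For instance, $R$ is obtained from $Q_1$ by adding $c$ to $\{a,b\}$, and symmetrically for $Q_2$ and $Q_3$. Then I would check that $R\notin N[P^*]$: $R\neq P^*$, and every neighbour of $P^*$ has exactly one part of size $2$ and the rest singletons, whereas $R$ has a part of size $3$.

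The remaining point, and the only place the hypothesis $k\leq n-2$ is used, is that $R$ is actually a vertex of $\mathcal{B}$. If $\mathcal{B}=\BG$ this is immediate. If $\mathcal{B}=\BuG$, then $R$ has $n-2$ parts, so $\ord{R}=n-2\geq k$ and $R\in V(\mathcal{B})$; we also need $Q_1,Q_2,Q_3\in V(\mathcal{B})$, which holds since they are given as neighbours of $P^*$ in $\mathcal{B}$. I expect no real obstacle here. The main point is to observe that $R$ has exactly $n-2$ parts, which is why this lemma (unlike the earlier ones, which needed only $k\leq n-1$) requires $k\leq n-2$.
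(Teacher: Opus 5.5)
Your proposal is correct and follows the paper's proof: in both, the common neighbour is the partition obtained from $P^*$ by merging the three vertices of the triangle in $\comp{G}$ into a single part. You also spell out what the paper leaves implicit, namely that $R$ has exactly $n-2$ parts and so lies in $\mathcal{B}$ because $k\leq n-2$.
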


\begin{proof}
    If $\psi(Q_1)$,~$\psi(Q_2)$, and~$\psi(Q_3)$ form the three edges of a triangle, then let $\{u,v,w\}$ be the vertex set of this triangle, and let~$R$ be the partition obtained from~$P^*$ by replacing $\{u\}$,~$\{v\}$, and~$\{w\}$ with $\{u,v,w\}$.
    Then~$R$ is a neighbour of $Q_1$,~$Q_2$, and~$Q_3$, and $R\notin N[P^*]$.
\end{proof}

The following lemma completes the proof of Theorem~\ref{thm:maincomplete}, from which Theorem~\ref{thm:main} follows immediately.

\begin{lemma}\label{lem:mainreconstruction}
    Suppose $k\leq n-2$.
    Then we can determine~$G'$ from~$\mathcal{B}$.
\end{lemma}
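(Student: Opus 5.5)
The plan is to locate a $P^*$-candidate, read off $L(\comp{G})$ from its neighbourhood, and then use the triangle-detection property to undo the claw/triangle ambiguity in Whitney's theorem.

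First, we compute the set $\Omega_5$ of $P^*$-candidates directly from $\mathcal{B}$. This is possible because properties~1--5 are defined purely in terms of the graph structure of $\mathcal{B}$. We must check that $\Omega_5\neq\emptyset$. Lemma~\ref{lem:linegraph2} gives $P^*\in\Omega_4$, so $\Omega_4\neq\emptyset$. Since $\Omega_5$ consists of the elements of the finite nonempty set $\Omega_4$ maximising $T_P$, it is nonempty. Fix any $P\in\Omega_5$. By Lemma~\ref{lem:linegraph2}, $\mathcal{N}(P)\cong L(\comp{G})$ via $\psi_P$, so we know $L(\comp{G})$ up to isomorphism.

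Next, we show that for this $P$, a triangle $\Delta$ of $\mathcal{N}(P)$ corresponds under $\psi_P$ to a triangle of $\comp{G}$ exactly when the vertices of $\Delta$ have a common neighbour outside $N[P]$. Every triangle of a line graph arises from either a triangle or a claw in the underlying graph. For claws, Lemma~\ref{lem:P^*dist2} shows that no such common neighbour exists. Hence $T_P$ is at most the number of triangles of $\comp{G}$. For $P^*$, Lemma~\ref{lem:P^*dist2_2} (which uses $k\le n-2$) shows that every triangle of $\comp{G}$ yields a counted triangle, and Lemma~\ref{lem:P^*dist2} shows that claws do not. So $T_{P^*}$ equals the number of triangles of $\comp{G}$. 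Since $P^*\in\Omega_4$ and $P$ maximises $T$ over $\Omega_4$, we get $T_P\ge T_{P^*}$. Therefore every triangle of $\comp{G}$ is detected at $P$ as well, and the characterisation holds at $P$. The one point needing care here is that Lemma~\ref{lem:P^*dist2_2} only covers $P^*$, while the counting argument transfers the conclusion to $P$. I expect this transfer to be the crux of the proof.

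Finally, we reconstruct $G'$. The components of $\comp{G}$ with at least one edge correspond to the components of $L(\comp{G})\cong\mathcal{N}(P)$. Apply Whitney's Theorem~\ref{thm:Whitney} componentwise to recover each such component. The only ambiguity is when a component of $\mathcal{N}(P)$ is a triangle: this comes from either a $K_3$ or a $K_{1,3}$ component of $\comp{G}$, and the previous step decides which. This recovers the graph $\comp{G}$ with its isolated vertices removed. The isolated vertices of $\comp{G}$ are exactly the universal vertices of $G$. Hence this graph is $\comp{G'}$: its complement taken on its own vertex set is precisely $G'$, the graph $G$ with universal vertices deleted. Taking complements yields $G'$. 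The whole procedure depends only on the isomorphism type of $\mathcal{B}$, which completes the argument.
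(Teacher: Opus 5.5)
Your proposal is correct and follows the paper's proof. The key step is the same squeeze $T_P\le T=T_{P^*}\le T_P$, obtained from Lemmas~\ref{lem:P^*dist2} and~\ref{lem:P^*dist2_2}, the fact that $P^*\in\Omega_4$ (Lemma~\ref{lem:linegraph2}), and the maximality of $T_P$ over $\Omega_4$. The only cosmetic difference is how $T_P=T$ is used: the paper uses $T$ to decide how many claw components of $\compgddagger$ to turn back into triangles, whereas you use the resulting pointwise characterisation at $P$ (stated in the paper's outline) to decide each triangle component of $\mathcal{N}(P)$ individually.
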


\begin{proof}
    Since $P^*\in \Omega_4$ by Lemma~\ref{lem:linegraph2}, we know that~$\Omega_4$ is non-empty.
    Therefore~$\Omega_5$ is non-empty by definition of~$\Omega_5$.
    Observe that we can recognise the vertices in~$\Omega_5$ by direct inspection of~$\mathcal{B}$.
    Let $P\in \Omega_5$ be given.
    Since $\Omega_5\subseteq \Omega_4$, by Lemma~\ref{lem:linegraph2} we know that~$\mathcal{N}(P)$ is isomorphic to the line graph~$L(\comp{G})$.
    By Observation~\ref{obs:lineinverse}, we can therefore recognise the graph~$\compgddagger$ obtained from~$\comp{G}$ by removing all of its isolated vertices and by replacing every triangle component with a claw component.
    Let~$T$ denote the number of triangles in~$\comp{G}$, and let~$T^{\ddagger}$ denote the number of triangles in~$\compgddagger$.
    Observe that~$\comp{G'}$ is the graph obtained from~$\comp{G}$ by removing all of its isolated vertices, since a vertex is universal in~$G$ if and only if it is isolated in~$\comp{G}$.
    Therefore,~$\comp{G'}$ is the graph obtained from~$\compgddagger$ by replacing $T-T^{\ddagger}$ of its claw components with triangle components.
    Clearly we can determine~$T^{\ddagger}$ from~$\compgddagger$.
    Suppose that we also have a strategy to determine~$T$.
    Then we can determine~$\comp{G'}$, and therefore we can determine~$G'$, completing the proof.

    It remains to show that we can determine~$T$.
    Let $P\in \Omega_4$ be fixed.
    We claim that if $P\in \Omega_5$, then $T_P=T$.
    If so, we are done since we can recognise~$\Omega_5$, and since we can determine~$T_{P'}$ from~$\mathcal{B}$ for each $P'\in \Omega_5$.
    Let~$\mathcal{T}_P$ denote the set of triangles in~$\mathcal{N}(P)$.
    Let~$X_T$ denote the set of triangles in~$\comp{G}$, let~$X_C$ denote the set of claws in~$\comp{G}$, and let $X\coloneqq X_T\cup X_C$.
    By Lemma~\ref{lem:linegraph2}, for each triangle or claw $K\in X$ with edges $e_1$,~$e_2$, and~$e_3$, there exist distinct neighbours $Q_1$,~$Q_2$, and~$Q_3$ of~$P$ such that $\psi(Q_i)=e_i$ for each $i\in \{1,2,3\}$, and such that $\{Q_1,Q_2,Q_3\}$ forms a triangle in~$\mathcal{N}(P)$.
    On the other hand, let $\Delta\in \mathcal{T}_P$ be a triangle on vertex set $\{Q_1,Q_2,Q_3\}$.
    Then, again by Lemma~\ref{lem:linegraph2}, we know that $\psi_P(Q_1)$,~$\psi_P(Q_2)$, and~$\psi_P(Q_3)$ are pairwise incident as edges of~$\comp{G}$, and therefore are either the three edges of a triangle in~$\comp{G}$ or the three edges of a claw in~$\comp{G}$.
    Therefore, there is a bijection $\xi:\mathcal{T}_P\rightarrow X$ which sends each triangle~$\Delta$ in~$\mathcal{N}(P)$ on vertex set $Q_1$,~$Q_2$, and~$Q_3$ to the triangle or claw in~$\comp{G}$ with edges $\psi(Q_1)$,~$\psi(Q_2)$, and~$\psi(Q_3)$.
    Moreover, Lemma~\ref{lem:P^*dist2} tells us that if $\xi(\Delta)\in X_C$, then the three vertices of~$\Delta$ have no common neighbour outside of~$N[P]$.
    So therefore $T_P\leq \ord{X_T}=T$.
    On the other hand, in the special case where~$P=P^*$, Lemma~\ref{lem:P^*dist2_2} tells us that if $\xi(\Delta)\in X_T$, then the three vertices of~$\Delta$ do have a common neighbour outside of~$N[P]$.
    So we have $T_{P^*}\geq \ord{X_T}=T$, which means that $T_{P^*}=T$.
    By definition of~$\Omega_5$, this means that $T_P=T$ for every $P\in \Omega_5$, as claimed.
\end{proof}

\subsection{Completing the proof of Theorem~\ref{thm:mainupperhalfcomplete}}\label{subsect:completingupper}

In this brief subsection, we will complete the proof of Theorem~\ref{thm:mainupperhalfcomplete}, from which Theorem~\ref{thm:mainupper} immediately follows.
Most of the work has already been completed.
It remains to examine how the upper Bell $k$-colouring graphs~$\mathcal{B}_{\geq k}(G)$ change as we vary~$k$, or remove universal vertices from~$G$.

The following observation follows immediately from our previous observation that in any independent set partition of any graph, any universal vertex must belong to a part of size~$1$.

\begin{observation}\label{obs:addfulldegree}
    Let~$G^+$ be the graph obtained from~$G$ by adding a universal vertex. Then $\chi(G^+)=\chi(G)+1$ and $\mathcal{B}_{\geq k}(G)\cong \mathcal{B}_{\geq k+1}(G^+)$.
\end{observation}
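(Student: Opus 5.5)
The plan is to exhibit an explicit bijection between the vertex sets and check that it preserves adjacency. Let $x$ denote the added universal vertex of~$G^+$.

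\textbf{Chromatic number.} Since $x$ is adjacent to every other vertex, in any proper colouring of~$G^+$ the vertex $x$ receives a colour used nowhere else. Restricting to $V(G)$ gives a proper colouring of~$G$, and conversely any proper colouring of~$G$ extends by giving $x$ a fresh colour. Hence $\chi(G^+)=\chi(G)+1$.

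\textbf{The bijection.} By the observation in the introduction, every independent set partition of~$G^+$ contains the part~$\{x\}$. So the map
\[
\phi(P) \coloneqq P\cup\{\{x\}\}
\]
is a bijection from the independent set partitions of~$G$ to those of~$G^+$. It satisfies $\ord{\phi(P)}=\ord{P}+1$. Consequently $P$ has at least~$k$ parts if and only if $\phi(P)$ has at least $k+1$ parts. Therefore $\phi$ restricts to a bijection $V(\mathcal{B}_{\geq k}(G))\to V(\mathcal{B}_{\geq k+1}(G^+))$.

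\textbf{Adjacency is preserved.} Suppose $Q$ is obtained from~$P$ by moving some $v\in V(G)$, either to another existing part or to a new singleton part. Then $\phi(Q)$ is obtained from $\phi(P)$ by exactly the same move, since the part $\{x\}$ is untouched. Conversely, suppose $\phi(Q)$ is obtained from $\phi(P)$ by moving a single vertex~$w$.
\begin{itemize}
\item The vertex $w$ cannot be~$x$. Moving $x$ would either merge $\{x\}$ into another part, which is impossible because the result would not be independent, or split $\{x\}$, which is impossible because $\{x\}$ has size one.
\item The vertex $w$ cannot be moved into~$\{x\}$, again because $x$ is adjacent to~$w$.
\end{itemize}
So the move takes place entirely within $V(G)$, and $Q$ is obtained from $P$ by the same move. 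Both upper Bell graphs are induced subgraphs of the corresponding Bell colouring graphs, so this shows that $\phi$ is a graph isomorphism.

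None of these steps is a real obstacle. The only point needing care is the converse direction of adjacency, namely excluding moves that involve~$x$, and this follows directly from universality.
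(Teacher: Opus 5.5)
Your proof is correct and follows the paper's approach. The paper derives this observation directly from the fact that a universal vertex always forms a singleton part; your bijection $P\mapsto P\cup\{\{x\}\}$ and your check that no move can involve $x$ write out exactly that argument in full, including the chromatic number part, which the paper leaves implicit.
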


\begin{lemma}\label{lem:reconfigsize}
    Let~$k_1$ and~$k_2$ be natural numbers. If $\mathcal{B}_{\geq k_1}(G)\cong \mathcal{B}_{\geq k_2}(G)$, then either $k_i > n$ for both $i\in \{1,2\}$, or $\chi(G)<k_i\leq n$ for both $i\in \{1,2\}$ and $k_1=k_2$, or $k_i \leq \chi(G)$ for both $i\in \{1,2\}$.
\end{lemma}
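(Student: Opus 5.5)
The plan is to use the number of vertices of $\mathcal{B}_{\geq k}(G)$ as the invariant that separates the cases. Write $b_j$ for the number of independent set partitions of $G$ with exactly $j$ parts. Then $b_j=0$ for $j<\chi(G)$ and for $j>n$, while $b_j\geq 1$ for every $\chi(G)\leq j\leq n$. That every intermediate value is attained can be seen by starting from a $\chi(G)$-partition and repeatedly splitting off a single vertex from a part of size at least $2$, which increases the number of parts by exactly one, until we reach $P^*$. Hence
\[
\ord{V(\mathcal{B}_{\geq k}(G))}=\sum_{j\geq k} b_j,
\]
and we view this as a function of $k$.

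This function equals $0$ for $k>n$. It is constant, equal to the Bell number $b(G)=\sum_j b_j>0$, for all $k\leq\chi(G)$, since every independent set partition then has at least $k$ parts. On the range $\chi(G)\leq k\leq n+1$ it is strictly decreasing in $k$, because passing from $k$ to $k+1$ removes the $b_k\geq 1$ partitions with exactly $k$ parts.

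Now suppose $\mathcal{B}_{\geq k_1}(G)\cong \mathcal{B}_{\geq k_2}(G)$, so the two vertex counts agree. We compare the three regimes.
\begin{itemize}
\item If $k_1>n$, the graph is empty, so $k_2>n$ as well, since for $k_2\leq n$ the graph contains $P^*$.
\item If $k_1\leq\chi(G)$, the count is $b(G)$. For $k\geq \chi(G)+1$ the count is $b(G)-\sum_{j<k}b_j\leq b(G)-b_{\chi(G)}<b(G)$, so $k_2\leq\chi(G)$ too.
\item If $\chi(G)<k_1\leq n$, then by the previous two items $k_2$ also lies in $(\chi(G),n]$. Strict monotonicity on this range then forces $k_1=k_2$.
\end{itemize}

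There is no real obstacle here; the only point requiring care is to justify $b_j\geq1$ for every $\chi(G)\leq j\leq n$, which is exactly what makes the vertex count strictly decreasing. Isomorphism is used only through the number of vertices, so no structural results from earlier in the section are needed.
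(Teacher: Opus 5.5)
Your proposal is correct and follows essentially the same route as the paper: the paper also notes that partitions with exactly $j$ parts exist precisely for $j\in[\chi(G),n]$, so the graphs $\mathcal{B}_{\geq \chi(G)}(G),\ldots,\mathcal{B}_{\geq n+1}(G)$ have pairwise different numbers of vertices, and it compares only vertex counts. Your splitting argument for $b_j\geq 1$ and your explicit three-regime case analysis just spell out details the paper leaves implicit.
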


\begin{proof}
    For each natural number~$k$, there exists a partition of~$V(G)$ into exactly~$k$ independent sets if and only if $k\in [\chi(G),n]$.
    Hence $\mathcal{B}_{\geq k}(G)=\mathcal{B}_{\geq \chi(G)}(G)$ if $k\leq \chi(G)$, and $\mathcal{B}_{\geq k}(G)=\mathcal{B}_{\geq n+1}(G)$ if $k>n$, and every graph in $\{\mathcal{B}_{\geq \chi(G)}(G),\ldots,\mathcal{B}_{\geq n+1}(G)\}$ is distinct as these graphs each have a different number of vertices to each other.
\end{proof}

\begin{lemma}\label{lem:uppermainuseful}
    Let~$G_1$ and~$G_2$ be graphs on~$n_1$ and~$n_2$ vertices respectively.
    Let~$k_1$ and~$k_2$ be natural numbers.
    Suppose $n_1-k_1=n_2-k_2$ and $G_1'\cong G_2'$.
    Then $\mathcal{B}_{\geq {k_1}}(G_1)\cong\mathcal{B}_{\geq {k_2}}(G_2)$.
\end{lemma}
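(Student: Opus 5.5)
The plan is to reduce both graphs to the common graph $H\coloneqq G_1'\cong G_2'$ by stripping universal vertices one at a time, using Observation~\ref{obs:addfulldegree} at each step. Let $u_i$ denote the number of universal vertices of $G_i$. Then $G_i$ is obtained from $G_i'$ by adding $u_i$ universal vertices one after another. Each added vertex is adjacent to all existing vertices, including the universal vertices added before it, so it is universal in the graph at the moment it is added.

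The first step is to apply Observation~\ref{obs:addfulldegree} $u_i$ times, with $G_i'$ as the base graph. This gives
\[
\mathcal{B}_{\geq k_i}(G_i)\cong \mathcal{B}_{\geq k_i-u_i}(G_i').
\]
This needs care when $k_i-u_i\leq 0$. In that case we should read $\mathcal{B}_{\geq m}$ for $m\leq 0$ as all partitions, which is the same graph as $\mathcal{B}_{\geq 1}$. Observation~\ref{obs:addfulldegree} is really a statement about the bijection $P\mapsto P\cup\{\{w\}\}$, where $w$ is the added universal vertex. This bijection raises the number of parts by exactly one and preserves adjacency, because $w$ always sits in a singleton part and is never moved. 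So the observation holds for every integer $k$ under this convention, and I would state that explicitly.

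The second step uses the hypothesis. Since $G_i'$ has $n_i-u_i$ vertices, the quantity $(n_i-u_i)-(k_i-u_i)=n_i-k_i$ is the same for $i=1,2$. Fix an isomorphism $\phi\colon G_1'\to G_2'$. Then $\ord{V(G_1')}=\ord{V(G_2')}$, so $k_1-u_1=k_2-u_2$. The map $\phi$ induces a bijection on independent set partitions that preserves the number of parts and the move relation. Hence $\mathcal{B}_{\geq k_1-u_1}(G_1')\cong\mathcal{B}_{\geq k_2-u_2}(G_2')$, and chaining this with the first step gives the lemma.

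The main obstacle is only the bookkeeping at the boundary: thresholds $k_i-u_i$ that are nonpositive, and the degenerate case where $G_i'$ is empty because $G_i$ is complete. In the empty case $\mathcal{B}$ is a single vertex (the empty partition) or empty, according to whether the threshold is at most $0$. Since the two thresholds coincide, this case is handled uniformly. Apart from this, the argument is direct.
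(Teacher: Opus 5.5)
Your proof is correct and is essentially the paper's argument: strip the $n_i-n'$ universal vertices using Observation~\ref{obs:addfulldegree}, note that the thresholds $k_i-(n_i-n')$ coincide because $n_1-k_1=n_2-k_2$, and conclude from $G_1'\cong G_2'$. Your explicit handling of nonpositive thresholds, and of the case where $G_i'$ has no vertices, is extra care that the paper leaves implicit.
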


\begin{proof}
    Let~$n'$ denote the number of vertices of~$G_1'$ (or~$G_2'$).
    For each $i\in\{1,2\}$,~$G_i$ is obtained from~$G_i'$ by adding $n_i-n'$ universal vertices.
    Hence, by Observation~\ref{obs:addfulldegree}, we have $\mathcal{B}_{\geq k_1-(n_1-n')}(G_i')\cong\mathcal{B}_{\geq k_i}(G_i)$ for each $i\in\{1,2\}$.
    Moreover, since $n_1-k_1=n_2-k_2$, we know that $k_1-(n_1-n')=k_2-(n_2-n')$, and therefore we have $\mathcal{B}_{\geq k_1-n_1+n'}(G_1')\cong \mathcal{B}_{\geq k_2-n_2+n'}(G_2')$.
    Therefore we have $\mathcal{B}_{\geq {k_1}}(G_1)\cong\mathcal{B}_{\geq {k_2}}(G_2)$, as claimed.
\end{proof}

We now complete the proof of Theorem~\ref{thm:mainupperhalfcomplete}.

\begin{proof}[Proof of Theorem~\ref{thm:mainupperhalfcomplete}]
    For the `if' direction, suppose $G_1'\cong G_2'$.
    If $n_1-k_1=n_2-k_2$, then we have $\mathcal{B}_{\geq k_1}(G_1)\cong\mathcal{B}_{\geq k_2}(G_2)$ by Lemma~\ref{lem:uppermainuseful} and we are done.
    If $k_i\leq \chi(G_i)$ for both $i\in \{1,2\}$, then $\mathcal{B}_{\geq k_i}(G_i)=\mathcal{B}(G_i)$ for both $i\in \{1,2\}$.
    But we know that $\mathcal{B}(G_1')\cong \mathcal{B}(G_2')$, so we have $\mathcal{B}(G_1)\cong\mathcal{B}(G_2)$ by Theorem~\ref{thm:maincomplete}.
    Therefore, we have $\mathcal{B}_{\geq k_1}(G_1)\cong\mathcal{B}_{\geq k_2}(G_2)$.

    For the `only if' direction, if $\mathcal{B}_{\geq k_1}(G_1)\cong\mathcal{B}_{\geq k_2}(G_2)$, we have $G_1'\cong G_2'$ by Lemma~\ref{lem:mainreconstruction}.
    Let~$n'$ denote the number of vertices of~$G_1'$ (or~$G_2'$).
    Since we have $\mathcal{B}_{\geq k_1}(G_1)\cong\mathcal{B}_{\geq k_2}(G_2)$, we have
    $\mathcal{B}_{\geq k_1-(n_1-n')}(G_1')\cong\mathcal{B}_{\geq k_2-(n_2-n')}(G_2')$ by Observation~\ref{obs:addfulldegree}.
    Since $G_1'\cong G_2'$, Lemma~\ref{lem:reconfigsize} guarantees that one of the following three scenarios holds.
    Either we have $k_i-(n_i-n')>n_i$ for both $i\in \{1,2\}$, which means that $k_i>n_i$ for both $i\in \{1,2\}$, a contradiction.
    Or, we have $k_1-(n_1-n')=k_2-(n_2-n')$, which means that $n_1-k_1=n_2-k_2$, as claimed.
    Or, we have $k_i-(n_i-n')\leq \chi(G_i')$ for both $i\in \{1,2\}$, which means $k_i\leq \chi(G_i)$ for both $i\in \{1,2\}$ by Observation~\ref{obs:addfulldegree}, as claimed.
\end{proof}

%% file: 3_lowertheorem.tex
\section{Proof of Theorem~\ref{thm:mainlower}}

In this section we will prove Theorem~\ref{thm:mainlower}.
Fix a graph~$G$ on~$n$ vertices and a natural number~$k$.
We use the same terminology introduced at the start of Section~\ref{section:main}. Our proof will not use any results from Section~\ref{section:main} (except for Observations~\ref{obs:vopenneighbourhoodclique} and~\ref{obs:identicalneighbours}), instead expanding on ideas from Asgarli et al.~\cite{asgarli2025coloring}, Berthe et al.\ \cite{berthe2025determining}, and Hogan et al.\ \cite{hogan2024note}, where the local structure of a recolouring graph~$\mathcal{C}_{k}(G)$ is used to determine the underlying graph~$G$.
Unfortunately, the local structure of a Bell $k$-colouring graph is considerably more complex.
In the case of recolouring graphs, it is straightforward to show that the neighbourhood of any partition~$P$ consists of disjoint cliques, each corresponding to recolouring a given vertex of the underlying graph.
However this is not necessarily true for Bell colouring graphs if~$P$ contains parts of size less than~$4$.
In particular, it is true that any two partitions obtained from~$P$ by moving the same vertex of~$G$ are adjacent to each other (this is Observation~\ref{obs:vopenneighbourhoodclique}), however observe that if $\{u,v\}\in P$ has size~$2$, then the $u$-split neighbour and the $v$-split neighbour of~$P$ are equal, and that if $\{u,v,w\}\in P$ is a part of size~$3$, then the $u$-split, $v$-split, and $w$-split neighbours of~$P$ are each adjacent to each other.
Nonetheless, if we can guarantee the existence of a partition $P^\dagger\in V(\BlG)$ where every part of~$P^\dagger$ has size at least~$4$, then so long as $\ord{P^\dagger}<k$, it will be possible with some care to use a similar approach to Berthe et al.\ to recover the underlying graph~$G$.

In particular, our approach for proving Theorem~\ref{thm:mainlower} is as follows.
For the rest of this section, suppose~$G$ has maximum degree $\Delta(G)<\frac{1}{9}n-\frac{1}{3}$, and suppose $k\geq \chi(G)+1$.
We will show that this maximum degree condition on~$G$ guarantees the existence of a partition $P^\dagger\in V(\BlG)$ such that $\ord{P^\dagger}=\chi(G)$ and every part of~$P^\dagger$ has size at least~$4$ (see Lemma~\ref{lem:chihajnal}).
By Observation~\ref{obs:vopenneighbourhoodclique}, for every partition $P\in V(\BlG)$, the open neighbourhood~$\mathcal{N}(P)$ has at most~$n$ components, each corresponding to at least one vertex of~$G$.
In particular, we will show that~$\mathcal{N}(P^\dagger)$ has exactly~$n$ components, each corresponding to a unique vertex of~$G$.
We will then show that we can recognise whether we are in one of two regimes: the first where $k=\chi(G)+1$, the second where $k\geq \chi(G)+2$.
Then, looking only at partitions $P\in V(\BlG)$ whose neighbourhoods contain a maximal number of components (i.e.~$n$ components), we will construct a candidate graph~$G_P$ for each such~$P$ by looking at the neighbourhood in~$\BlG$ at distance~$2$ from~$P$.
(Note that this construction will differ depending on which regime we are in.)
We will then show that~$G_{P^\dagger}$ is isomorphic to~$G$, whereas every other candidate graph is isomorphic to either a subgraph of~$G$ (if we are in the first regime) or a supergraph of~$G$ (if we are in the second regime).
This allows us to reconstruct~$G$ by selecting either a largest candidate graph or a smallest candidate graph, depending on the regime.

We first show the following, related to the classical theorem of Hajnal and Szemerédi on vertex partitions into independent sets \cite{hajnal1970proof}.

\begin{lemma}\label{lem:chihajnal}
    Every graph~$G$ with maximum degree $\Delta(G)<\frac{1}{9}n-\frac{1}{3}$ has a vertex partition into~$\chi(G)$ independent sets, each of size at least~$4$.
\end{lemma}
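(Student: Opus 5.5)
Let $\chi:=\chi(G)$, and let $\Delta:=\Delta(G)$ with $\Delta<\frac{1}{9}n-\frac13$, equivalently $9\Delta+3<n$. The plan is to take a partition of $V(G)$ into exactly $\chi$ independent sets that is optimal for a potential function, and then to show that no part can have size at most $3$.

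We choose, among all partitions of $V(G)$ into exactly $\chi$ independent sets, one partition $P$ that lexicographically maximises the vector of part sizes sorted in increasing order. In particular $P$ maximises the size of its smallest part, and subject to that, minimises the number of parts attaining that minimum. Such partitions exist because a proper $\chi$-colouring uses all $\chi$ colours, by minimality of $\chi$. Suppose for contradiction that some part $A\in P$ has $\ord{A}\leq 3$.

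\textbf{Step 1: a vertex moves directly into $A$.} Every vertex outside $A$ has at most $\Delta$ neighbours, and $A$ has at most $3$ vertices, so $N(A)$ has size at most $3\Delta$. Let $B$ be any part with $\ord{B}\geq \ord{A}+2$. If some $x\in B$ has no neighbour in $A$, then moving $x$ into $A$ keeps both parts independent and still gives $\chi$ parts. Afterwards $A$ has size $\ord{A}+1$ and $B$ has size at least $\ord{A}+1$, so the sorted size vector improves lexicographically, which contradicts the choice of $P$. Hence every part $B$ with $\ord{B}\geq\ord{A}+2$ is contained in $N(A)$, and so the union of all such parts has size at most $3\Delta$.

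\textbf{Step 2: the remaining parts are small, so there are many of them.} Every other part has size at most $\ord{A}+1\leq 4$. These parts therefore cover at least $n-3\Delta$ vertices, giving roughly $(n-3\Delta)/4$ parts of size at most $4$.

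\textbf{Step 3: the main obstacle, ruling out many small parts.} Here we use the lower bound on $n$ to merge or rearrange small parts and so reduce the number of parts, contradicting the minimality of $\chi$. The most direct approach is to find two parts $C,D$ of size at most $4$ with no edges between them. Then $C\cup D$ is independent, and the partition into $\chi-1$ independent sets contradicts the definition of $\chi$. Each part of size at most $4$ sends at most $4\Delta$ edges out, so it can be joined to at most $4\Delta$ other parts. If the number of small parts exceeds $4\Delta+1$, some pair has no edges between them. This holds provided $(n-3\Delta)/4 - 1 > 4\Delta+1$, that is, $n>19\Delta+8$. That bound is weaker than the stated hypothesis, so this crude count does not suffice.

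To reach the constant $9$, the plan is to sharpen Steps 1 and 3 using the exchange structure more carefully.

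\begin{itemize}
\item \textbf{Refining Step 1.} A large part $B$ sits inside $N(A)$, and each vertex of $A$ has at most $\Delta$ neighbours. So the number of large parts is at most $3\Delta/(\ord{A}+2)$, and their total size is at most $3\Delta$. One can also use two-step moves, in which a vertex moves into $A$ from a part $C$ of size $\ord{A}+1$, and $C$ is then refilled from a large part.
\item \textbf{Refining Step 3.} Rather than merging two parts, we merge by redistributing a small part $C$ of size $s\leq 4$: each $c\in C$ is sent to some other part containing no neighbour of $c$. A vertex $c$ is blocked by at most $\Delta$ parts. Sending the vertices of $C$ one at a time, each previously placed vertex of $C$ blocks at most $\Delta$ further parts. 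Since $s\leq 4$, about $4\Delta$ blocked parts suffice to absorb all of $C$ and eliminate it, reducing to $\chi-1$ parts.
\end{itemize}

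It then remains to count the parts: $\chi \geq n/(\Delta+1)$, since $\Delta+1$ colours always suffice and each independent set meets the complement of a vertex's closed neighbourhood, giving a greedy bound. This count should show that more than about $4\Delta$ parts exist once $n>9\Delta+3$. Balancing the count of small versus large parts against the $3\Delta$ bound from Step 1 should give exactly the threshold $9\Delta+3<n$. I expect this counting and balancing in Step 3 to be the hard part, and I would try the redistribution argument first.
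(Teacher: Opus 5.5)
Your Steps 1 and 2 are correct, but as written the proof stops there. Step 3 only closes when $n>19\Delta+8$, as you note yourself, and the proposed refinements are never carried out.

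The final count also rests on the inequality $\chi\geq n/(\Delta+1)$, which is false. A perfect matching on $n$ vertices has $\Delta=1$ and $\chi=2$ for every $n$. Greedy colouring gives the \emph{opposite} inequality, $\chi\le\Delta+1$. So your goal of exhibiting ``more than about $4\Delta$ parts'' can never be met.

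The redistribution idea cannot produce a contradiction either. In any partition into $\chi$ independent sets, every part must contain a vertex with a neighbour in each of the other $\chi-1\le\Delta$ parts, since otherwise the whole part could be redistributed. That situation is perfectly consistent, so no contradiction is reached.

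The missing step is a single line: combine Step 2 with the correct bound. Step 2 shows that the parts of size at most $\lvert A\rvert+1\le 4$ cover at least $n-3\Delta$ vertices, so $\chi\geq (n-3\Delta)/4$. Together with $\chi\le\Delta+1$ this gives $n\le 7\Delta+4$.
\begin{itemize}
\item If $\Delta\ge 1$, this contradicts $n>9\Delta+3\ge 7\Delta+5$.
\item If $\Delta=0$, then $\chi=1$ and the single part has $n\ge 4$ vertices.
\end{itemize}
Completed this way, your route is genuinely different from the paper's and shorter. It even improves the threshold to $7\Delta+4$.

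The paper never uses $\chi\le\Delta+1$. Instead it fixes a smallest part $A$, of size $m$, and double counts the edges leaving $A$:
\begin{itemize}
\item every other part sends at least one edge to $A$, since otherwise the two parts could be merged;
\item every part of size $m+1$ sends at least two edges to $A$; if there were only one edge $ab$, an exchange argument would force $d(a)\ge (n-m)/(2m)>\Delta$;
\item every part of size at least $m+2$ lies inside the neighbourhood of $A$, which is your Step 1.
\end{itemize}
Summing gives $m\Delta\ge (n-m)/m$, that is, $\Delta\ge (n-3)/9$ when $m\le 3$.
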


\begin{proof}
    Let $\Delta\coloneqq \Delta(G)$.
    For each partition~$P$ of~$V(G)$ into~$\chi(G)$ independent sets, let $m\coloneqq m(P)$ be the size of a smallest part in~$P$, and let $c\coloneqq c(P)$ be the number of such parts of minimal size.
    Choose a partition~$P$ of~$V(G)$ into~$\chi(G)$ independent sets such that~$c$ is minimal among the set of such partitions where~$m$ is maximal.
    For each natural number~$\ell$, denote by~$n_\ell$ (respectively~$n_\ell^+$) the number of vertices in~$G$ belonging to a part of size~$\ell$ (resp.\ of size at least~$\ell$) in~$P$.

    If $m\geq 4$ then we are done.
    So suppose $m\leq 3$.
    Let $A,B\in P$ be distinct, where~$A$ has size~$m$.
    Then there is at least one edge between~$A$ and~$B$, since otherwise replacing~$A$ and~$B$ with $A\cup B$ would yield a partition of~$V(G)$ into $\chi(G)-1$ independent sets.

    Suppose~$B$ has size $m+1$.
    Suppose that there is exactly one edge~$ab$ between~$A$ and~$B$.
    Let $C\in P$ be distinct from~$A$ and~$B$.
    Then~$C$ contains at least one neighbour of~$a$, since otherwise adding~$a$ to~$C$ and adding all other vertices of~$A$ to~$B$ would give a partition of~$V(G)$ into $\chi(G)-1$ independent sets.
    Moreover, if~$C$ has size $\ell\geq 2m+1$, then~$C$ must contain at least $\ell-m+1$ neighbours of~$a$, since if there are~$m$ vertices $c_1,\ldots,c_m\in C$ not adjacent to~$a$, then the partition obtained from~$P$ by replacing $A$,~$B$, and~$C$ with $\{a,c_1,\ldots,c_m\}$, $B\cup A\setminus \{a\}$, and $C\setminus \{c_1,\ldots,c_m\}$ has fewer parts of size~$m$ than~$P$, contradicting the minimality of~$P$.
    Therefore we can calculate
    \begin{align*}
        \Delta\geq d(a)\geq \frac{n_m-m}{m} + \sum_{\ell=m+1}^{2m}\frac{n_\ell}{\ell}+\sum_{\ell=2m+1}^\infty\frac{(\ell-m+1)n_\ell}{\ell}\geq \frac{n-m}{2m},
    \end{align*}
    a contradiction since $m\leq 3$.
    Therefore there are at least two edges between~$A$ and~$B$ in this case.
    
    Suppose~$B$ has size at least $m+2$.
    Suppose there is a vertex~$b\in B$ with no neighbours in~$A$.
    Then the partition~$Q$ obtained from~$P$ by adding~$b$ to~$A$ would satisfy either $c(Q)<c(P)$ or $m(Q)>m(P)$, contradicting our choice of~$P$ in either case.
    Therefore, in this case, every $b\in B$ is adjacent to at least one vertex in~$A$.
    
    Therefore we can calculate
    \begin{align*}
        m\Delta\geq \sum_{a\in A}d(a)\geq \frac{n_m-m}{m} + \frac{2n_{m+1}}{m+1}+n_{m+2}^+\geq \frac{n-m}{m},
    \end{align*}
    a contradiction since $m\leq 3$.
\end{proof}

We will use the following terminology and notation throughout the rest of this section.
For each partition $P\in V(\BlG)$, denote by~$C_P$ the number of components of~$\mathcal{N}(P)$.
Call a partition $P\in V(\BlG)$ a \textit{reconstruction candidate} if $C_P=\max_{P'\in V(\BlG)}C_{P'}$, i.e.\ if the open neighbourhood of~$P$ has as at least as many components as any other partition of~$\BlG$.
For each $P\in V(\BlG)$ and $v\in V(G)$, define the \textit{$v$-neighbourhood} $N_v(P)\subseteq N(P)$ of~$P$ to be set of vertices $Q\in N(P)$ which can be obtained from~$P$ by moving~$v$.

\begin{lemma}\label{lem:reconstructioncandidatefacts}
    If $P^\dagger\in V(\mathcal{B})$ is a partition with $\chi(G)$ parts, where each part has size at least~$4$, then~$P^\dagger$ is a reconstruction candidate.
    Conversely, each of the following statements hold for every reconstruction candidate $P\in V(\BlG)$.
    \begin{enumerate}
        \item We have $C_P=n$.
        \item\label{lem:reconstructioncandidatefacts2} Let $A\in P$ be a part of size $1$ and let $B\in P$ be a part of size at most $2$.
        Then there is at least one edge $uv\in E(G)$ such that $u\in A$ and $v\in B$.
        \item Either we have $\ord{P}=k$, or~$P$ has no parts of size~$2$ or~$3$.
    \end{enumerate}
\end{lemma}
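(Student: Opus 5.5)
The plan has three parts: show $C_P\le n$ for every $P$, show $C_{P^\dagger}=n$, and then obtain the three listed facts by exhibiting, in each bad configuration, two distinct vertices of $G$ whose $v$-neighbourhoods lie in the same component of $\mathcal{N}(P)$.

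\textbf{Upper bound.} Every $Q\in N(P)$ is obtained from $P$ by moving some vertex, so $N(P)=\bigcup_{v}N_v(P)$. By Observation~\ref{obs:vopenneighbourhoodclique}, each nonempty $N_v(P)$ is a clique. Hence $\mathcal{N}(P)$ has at most as many components as there are vertices $v$ with $N_v(P)\neq\emptyset$, giving $C_P\le n$. Moreover, $C_P=n$ holds if and only if every $N_v(P)$ is nonempty and $N_u(P)\cup N_v(P)$ is disconnected for all $u\neq v$. So it suffices to show that, whenever $Q\in N_u(P)$ and $Q'\in N_v(P)$ with $u\ne v$, the partitions $Q$ and $Q'$ are distinct and non-adjacent.

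\textbf{The partition $P^\dagger$.} Here every part has size at least $4$, and $\ord{P^\dagger}=\chi(G)<k$. Splitting any $v$ therefore gives a neighbour in $\BlG$ (it has $\chi(G)+1\le k$ parts and is independent), so each $N_v(P^\dagger)$ is nonempty. Take $Q$ obtained by moving $u$ and $Q'$ by moving $v\neq u$. Then $Q$ and $Q'$ differ at both $u$ and $v$, so $Q\neq Q'$. For adjacency, note that any $Q\in N(P^\dagger)$ has all parts of size at least $3$ except possibly the part containing the moved vertex. Suppose $Q'$ were obtained from $Q$ by moving some $w$. Then $Q'$ agrees with $P^\dagger$ except at $v$, while $Q$ agrees with $P^\dagger$ except at $u$, so the set of vertices whose part differs is constrained. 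Since both the part $P^\dagger(u)\setminus\{u\}$ (size $\ge3$) and the part $P^\dagger(v)\setminus\{v\}$ appear, a single move cannot transform one into the other. I would argue this by comparing which parts of size $\ge 3$ are shared, in the style of the proof of Lemma~\ref{lem:nosize4}. Thus $C_{P^\dagger}=n$, $P^\dagger$ is a reconstruction candidate, and the maximum of $C_P$ equals $n$, which is item 1.

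\textbf{Items 2 and 3.} Let $P$ be a reconstruction candidate, so every $N_v(P)$ is nonempty and they are pairwise in different components.

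For item 2, suppose a part $\{a\}$ and a part $B$ with $\ord{B}\le2$ have no edges between them. If $B=\{b\}$, merging gives a neighbour lying in both $N_a(P)$ and $N_b(P)$, a contradiction. If $B=\{b,c\}$, adding $a$ to $B$ gives $Q\in N_a(P)$, and splitting $b$ gives $Q'\in N_b(P)=N_c(P)$. The $b$-split and $c$-split neighbours coincide, so $N_b(P)\cap N_c(P)\neq\emptyset$ already yields a contradiction.

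For item 3, this same coincidence does most of the work:
\begin{itemize}
\item a part $\{b,c\}$ of size $2$ whose split is allowed forces $N_b(P)\cap N_c(P)\neq\emptyset$;
\item a part $\{u,v,w\}$ of size $3$ has pairwise adjacent $u$-, $v$- and $w$-split neighbours, as noted in the section introduction.
\end{itemize}
Splitting is available precisely when $\ord{P}<k$, so if $\ord{P}\neq k$ (that is, $\ord{P}<k$), then $P$ has no part of size $2$ or $3$.

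\textbf{Main obstacle.} I expect the hardest step to be the careful non-adjacency check for $P^\dagger$. It needs a clean case analysis over split versus add moves for $u$ and for $v$, using that every part has size at least $4$ so that the residual parts of size at least $3$ pin down which vertex moved.
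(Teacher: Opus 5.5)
Your overall structure matches the paper's. You use the cliques $N_v(P)$ to get $C_P\le n$, note that $P^\dagger$ has nonempty split neighbourhoods, and then join two of the $n$ cliques whenever a condition fails. Your item~3 argument is fine.

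The genuine gap is item~2 when $B=\{b,c\}$. Your contradiction there comes only from the $b$-split neighbour of $P$, which lies in $N_b(P)\cap N_c(P)$. (The equality $N_b(P)=N_c(P)$ you wrote is false in general, but the intersection is all you need.) That partition has $\ord{P}+1$ parts, so it is a vertex of $\BlG$ only when $\ord{P}<k$. When $\ord{P}=k$ your argument gives no contradiction, and this is exactly the case in which item~3 still allows parts of size~$2$. Tellingly, you never use the hypothesis that $a$ has no neighbour in $B$, and the partition $Q\in N_a(P)$ you introduce plays no role. The missing idea is to use that hypothesis to make moves that preserve the number of parts. The partitions obtained by adding $b$ to $\{a\}$ and by adding $c$ to $\{a\}$:
\begin{itemize}
\item are independent set partitions with $\ord{P}$ parts;
\item lie in $N_b(P)$ and $N_c(P)$ respectively;
\item are distinct and adjacent (move $a$ from $\{a,b\}$ to $\{c\}$).
\end{itemize}
This joins two of the $n$ cliques and contradicts $C_P=n$ whatever the value of $\ord{P}$.

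Two smaller points. First, your non-adjacency step for $P^\dagger$ is announced rather than proved. The sentence about $P^\dagger(u)\setminus\{u\}$ and $P^\dagger(v)\setminus\{v\}$ ``appearing'' is not an argument, although tracking which parts of size at least~$3$ survive a single move can be pushed through with some case analysis. The paper's argument is shorter:
\begin{itemize}
\item Choose $u',u''\in P^\dagger(u)\setminus\{u,v\}$, which is possible since $\ord{P^\dagger(u)}\ge 4$.
\item In $Q'$ the vertices $u,u',u''$ share a part. In $Q$ the vertex $u$ is apart from $u'$ and $u''$, which share a part. Hence $Q\neq Q'$.
\item A single move from $Q$ to $Q'$ would have to add $u$ to $Q(u')=P^\dagger(u)\setminus\{u\}$, which produces $P^\dagger$ itself, a contradiction.
\end{itemize}
Second, item~1 needs such a $P^\dagger$ to exist, so cite Lemma~\ref{lem:chihajnal}; otherwise you only know $C_P\le n$.
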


\begin{proof}
    Let~$P^\dagger\in V(\BlG)$ be a partition into~$\chi(G)$ parts, each of size at least~$4$.
    Since $k>\chi(G)$, for each $u\in V(G)$, we know that~$N_u(P^\dagger)$ is non-empty, since the $u$-split neighbour of~$P^\dagger$ belongs to~$N_u(P^\dagger)$.
    Let $u,v\in V(G)$ be distinct and let $Q_u\in N_u(P^\dagger)$ and $Q_v\in N_v(P^\dagger)$.
    Since every part of~$P^\dagger$ has size at least~$4$, there are at least two vertices $u',u''\in P^\dagger(u)$ distinct from both~$u$ and~$v$.
    Observe that $Q_u(u)\neq Q_u(u')=Q_u(u'')$, whereas $Q_v(u)=Q_v(u')=Q_v(u'')$.
    Therefore, we know that $Q_u\neq Q_v$, and moreover if~$Q_v$ is adjacent to~$Q_u$, then~$Q_v$ is obtained from~$Q_u$ by adding~$u$ to~$Q_u(u')$, which means that $Q_v=P^\dagger$, a contradiction.
    Therefore,~$Q_u$ and~$Q_v$ are distinct and not adjacent.
    It follows that $C_{P^\dagger}=n$.
    Observe that $C_Q\leq n$ for every $Q\in V(\mathcal{B})$ by Observation~\ref{obs:vopenneighbourhoodclique}, which applies for $\mathcal{B}=\BlG$ since~$\BlG$ is an induced subgraph of~$\BG$.
    Therefore,~$P^\dagger$ is a reconstruction candidate.
    Moreover, by Lemma~\ref{lem:chihajnal}, such a $P^\dagger$ does exist, and so $C_P=n$ for any reconstruction candidate $P\in V(\mathcal{B})$.
    
    For the final two statements, it suffices to show that if either condition fails, then there exist distinct $u,v\in  V(G)$ such that either $N_u(P)\cap N_v(P)\neq \emptyset$, or there is an edge from a vertex in~$N_u(P)$ to a vertex in~$N_v(P)$.
    If~$P$ has two parts~$\{u\}$ and~$\{v\}$ such that $uv\notin E(G)$, then the partition~$Q_{uv}$ obtained by merging~$\{u\}$ and~$\{v\}$ satisfies $Q_{uv}\in N_u(P)\cap N_v(P)$.
    If~$P$ has two parts~$\{u\}$ and $\{v,w\}$ such that~$uv,uw\notin E(G)$, then the partition~$Q_{vu}$ obtained by adding~$v$ to~$\{u\}$ and the partition~$Q_{wu}$ obtained by adding~$w$ to~$\{u\}$ are distinct and adjacent.
    Now suppose~$P$ has at most $k-1$ parts.
    For each $u\in V(G)$ such that $\ord{P(u)}\geq 2$, let~$Q_u$ denote the $u$-split neighbour of~$P$.
    If~$P$ has a part $\{u,v\}$ of size~$2$, then we have $Q_u=Q_v$, so therefore $Q_u\in N_u(P)\cap N_v(P)$.
    If~$P$ has a part $\{u,v,w\}$ of size $3$, then the two partitions $Q_u\in N_u(P)$ and $Q_v\in N_v(P)$ are distinct, and are adjacent to each other in~$\BlG$ since each can be obtained from the other by moving~$w$.
\end{proof}

As discussed at the start of this section, our goal now is to examine the neighbourhood at distance~$2$ of each reconstruction candidate $P\in V(\BlG)$, in order to build a candidate graph~$G_P$ for each such~$P$.
In particular, given two vertices $u,v\in V(G)$, in order to determine whether or not~$u$ and~$v$ should be adjacent in~$G_P$, we will consider pairs of neighbours $Q_u\in N_u(P)$ and $Q_v\in N_v(P)$, and look at the set of common neighbours of~$Q_u$ and~$Q_v$ which lie outside of~$N[P]$.

Let $P\in V(\BlG)$ be given, and let $Q_1,Q_2\in N(P)$ be neighbours of~$P$.
Say that~$Q_1$ and~$Q_2$ are \textit{double-closed} with respect to~$P$ if they are not adjacent to each other in~$\BlG$, and if there exist exactly two vertices of $N(Q_1)\cap N(Q_2)$ which are not in~$N[P]$ and which are adjacent to each other. (Observe that there may still be other vertices in $N(Q_1)\cap N(Q_2)$.)
For ease of writing, we will not specify that such a pair are double-closed with respect to~$P$ when the partition~$P$ is clear from context.

We now begin by studying certain split neighbours of a given partition $P\in V(\BlG)$.

\begin{lemma}\label{lem:splitclosureinformation}
    Let $P\in V(\BlG)$ be a partition of size at most $k-1$.
    Let $u,v\in V(G)$ be distinct such that $\ord{P(u)},\ord{P(v)}\geq 4$.
    Let~$Q_u$ and~$Q_v$ be the $u$-split neighbour and $v$-split neighbour of~$P$ respectively.
    Then the following statements hold.
    \begin{enumerate}
        \item $Q_u$ and~$Q_v$ are double-closed if and only if $uv\notin E(G)$ and $\ord{P}\leq k-2$.
        \item Every common neighbour of $Q_u$ and~$Q_v$ belongs to~$N[P]$ if and only if $uv\in E(G)$ and $\ord{P}=k-1$.
    \end{enumerate}
\end{lemma}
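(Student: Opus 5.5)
The plan is to classify all common neighbours of $Q_u$ and $Q_v$ completely, and then read off both statements. Let $A=P(u)$ and $B=P(v)$; these parts may coincide. Note that $Q_u$ has the parts $A\setminus\{u\}$ and $\{u\}$, while $Q_v$ has the parts $B\setminus\{v\}$ and $\{v\}$. First, $Q_u$ and $Q_v$ are not adjacent. They differ at both $u$ and $v$, in the sense that $Q_u(u)\neq Q_v(u)$ and $Q_u(v)\neq Q_v(v)$ (up to the parts containing them). A single move of one of these vertices cannot fix the other, and this uses $\ord{A},\ord{B}\geq 4$, exactly as in the proof of Lemma~\ref{lem:reconstructioncandidatefacts}.

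Next, let $R$ be any common neighbour. Then $R$ is obtained from $Q_u$ by moving some vertex $a$, and from $Q_v$ by moving some vertex $b$. Since $Q_u$ and $Q_v$ agree off $\{u,v\}$ (with $\ord{A\setminus\{u,v\}}\geq 2$ guaranteeing that the remaining parts are recognisable), Observation~\ref{obs:identicalneighbours} forces $a,b\in\{u,v\}$. If $a=u$, then by Observation~\ref{obs:vopenneighbourhoodclique} $R\in N[P]$ unless $R$ is obtained from $Q_u$ by moving $u$ to a part other than its place in $P$; matching this with $Q_v$ shows that the only such $R$ is $P$ itself. The interesting case is therefore $a=v$ and $b=u$. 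In this case $R$ is obtained from $Q_u$ by moving $v$, and must agree with $Q_v$ at $u$. Since $\{v\}\in Q_v$ and $\{u\}\in Q_u$, the only options are:
\begin{enumerate}
\item $R_1$, obtained from $Q_u$ by splitting $v$, which has both $\{u\}$ and $\{v\}$ as parts and has $\ord{P}+2$ parts;
\item $R_2$, obtained from $Q_u$ by adding $v$ to $\{u\}$, which has the part $\{u,v\}$ and $\ord{P}+1$ parts.
\end{enumerate}
Any other move of $v$ from $Q_u$ would disagree with $Q_v$ at $u$. The partition $R_1$ exists in $\BlG$ if and only if $\ord{P}\leq k-2$. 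The partition $R_2$ exists if and only if $uv\notin E(G)$, using $\ord{P}+1\leq k$. Neither lies in $N[P]$, because $\ord{A\setminus\{u,v\}}\geq 2$ or $\ord{A\setminus\{u\}}\geq 3$, as in Lemma~\ref{lem:nosize4}. Moreover, $R_1$ and $R_2$ are adjacent, by splitting $v$ from $\{u,v\}$. So the common neighbours of $Q_u$ and $Q_v$ outside $N[P]$ are exactly the members of $\{R_1,R_2\}$ that exist.

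Both statements then follow. The pair is double-closed precisely when both $R_1$ and $R_2$ exist, that is, when $uv\notin E(G)$ and $\ord{P}\leq k-2$. Every common neighbour lies in $N[P]$ precisely when neither exists, that is, when $uv\in E(G)$ and $\ord{P}\geq k-1$; combined with the hypothesis $\ord{P}\leq k-1$, this means $\ord{P}=k-1$.

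The main obstacle is the exhaustive case analysis in the second step: showing that moves of vertices other than $u$ and $v$, and moves of $u$ from $Q_u$, cannot produce common neighbours outside $N[P]$. The case $A=B$ needs the most care. There I would check explicitly which parts of $Q_u$ and $Q_v$ remain distinguishable, relying on $\ord{A}\geq 4$, so that $A\setminus\{u,v\}$ still has at least two vertices.
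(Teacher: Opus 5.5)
Your plan (classify the common neighbours of $Q_u$ and $Q_v$ outside $N[P]$, find $R_1$ and $R_2$, then read off when they exist) matches the paper's. However, the classification step has a genuine gap, and its conclusion is false in exactly the case you flagged. Observation~\ref{obs:identicalneighbours} only tells you that a vertex $x$ moved between $R$ and $Q_u$ must satisfy $Q_u(x)\neq Q_v(x)$. This holds for every $x\in P(u)\cup P(v)$, not just for $u$ and $v$: for $x\in P(u)\setminus\{u,v\}$ you have $Q_u(x)=P(u)\setminus\{u\}$, whereas $Q_v(x)$ is $P(u)$ or $P(u)\setminus\{v\}$. So ``$Q_u$ and $Q_v$ agree off $\{u,v\}$'' does not force the moved vertices into $\{u,v\}$.

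Concretely, suppose $P(u)=P(v)=\{u,v,a,b\}$. Let $R_a$ (respectively $R_b$) be obtained from $P$ by replacing $\{u,v,a,b\}$ with $\{v,a\},\{u,b\}$ (respectively $\{v,b\},\{u,a\}$). Then:
\begin{itemize}
\item $R_a$ is obtained from $Q_u$ by adding $b$ to $\{u\}$, and from $Q_v$ by adding $a$ to $\{v\}$; symmetrically for $R_b$.
\item It has $\ord{P}+1\leq k$ parts.
\item It is not in $N[P]$, since every member of $N[P]$ has a part containing at least three of $u,v,a,b$.
\end{itemize}
So when $\ord{P(u)}=4$, the two vertices of $P(u)\setminus\{u,v\}$ are precisely what \emph{produces} extra common neighbours. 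Your claim that the common neighbours outside $N[P]$ are exactly the existing members of $\{R_1,R_2\}$ fails.

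The missing argument, as in the paper, is a size count. Let $R\notin N[P]$ be a common neighbour. Then $\ord{R(v)}\leq 2$: otherwise, since $\{v\}\in Q_v$, $R$ is obtained from $Q_v$ by moving $v$, so $R\in N[P]$ by Observation~\ref{obs:vopenneighbourhoodclique}. As $\ord{Q_u(v)}\geq 3$, $R$ is obtained from $Q_u$ by moving some $x\in Q_u(v)$. If $x\neq v$, this forces $\ord{Q_u(v)}=3$, i.e.\ $P(u)=P(v)$ of size $4$, which yields only $R_a$ and $R_b$. This count is also what justifies $x=v$ when $P(u)\neq P(v)$; your Observation-based reason does not.

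You then still need to check that $R_a$ and $R_b$ are adjacent neither to each other nor to $R_1$ or $R_2$. In each such pair, one partition has a $2$-element part whose vertices lie in two distinct $2$-element parts of the other, and a single move cannot repair this. Only with this check does statement~1 survive. It survives because double-closedness asks for exactly two \emph{adjacent} common neighbours outside $N[P]$ and tolerates further non-adjacent ones. If it meant ``exactly two common neighbours outside $N[P]$'', statement~1 would fail in this case. Statement~2 is unaffected, since $R_a$ and $R_b$ occur only when $uv\notin E(G)$, where $R_2$ already lies outside $N[P]$.
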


\begin{proof}
    First, observe that $\{u\}\in Q_u$ and $\ord{Q_v(u)}\geq 3$.
    Therefore, if~$Q_u$ and~$Q_v$ are adjacent in~$\BlG$, then~$Q_u$ is the $u$-split neighbour of~$Q_v$, a contradiction since $\{v\}\in Q_v$ whereas $\{v\}\notin Q_u$.
    So~$Q_u$ and~$Q_v$ are not adjacent in~$\BlG$.
    
    We will now show that there are only four possible common neighbours of~$Q_u$ and~$Q_v$ outside of~$N[P]$.
    The result will then follow by examining when these common neighbours exist as vertices of~$\BlG$, and how they relate to each other if so.
    
    Suppose~$Q_u$ and~$Q_v$ have a common neighbour $R\notin N[P]$.
    Observe that we have $\ord{Q_u(v)}\geq 3$.
    If we have $\ord{R(v)}\geq 3$, then since $\{v\}\in Q_v$, we know that~$R$ is obtained from~$Q_v$ by moving~$v$.
    But this gives a contradiction, as then we have $R\in N[P]$ by Observation~\ref{obs:vopenneighbourhoodclique}.
    So we have $\ord{R(v)}\leq 2$, which means that~$R$ is obtained from~$Q_u$ by moving a vertex $x\in Q_u(v)$.
    
    Suppose $x=v$.
    If~$R$ is obtained from~$Q_u$ by adding~$v$ to a part $A\in P$ other than~$\{u\}$, then we have $\{u\}\in R$ and $\ord{Q_v(u)}\geq 3$, so~$R$ is the $u$-split neighbour of~$Q_v$.
    But then $A\cup\{v\}\notin R$, a contradiction.
    So~$R$ is obtained from~$Q_u$ either by splitting~$v$, or by adding~$v$ to~$\{u\}$.
    Denote the two partitions obtained from~$Q_u$ in this way by~$R_1$ and~$R_2$ respectively, observing that they may or may not exist as vertices in~$\BlG$.
    Indeed, observe that $R_1\in V(\BlG)$ if and only if $\ord{P}\leq k-2$, and observe that $R_2\in V(\BlG)$ if and only if $uv\notin E(G)$.
    Moreover, observe that if both $R_1,R_2\in V(\BlG)$, then~$R_1$ is the $u$-split neighbour of~$R_2$, i.e.~$R_1$ and $R_2$ are adjacent in $\BlG$.
    
    Suppose $x\neq v$.
    Then, since $\ord{R(v)}\leq 2$, we know that $\ord{P(v)}=4$ and $u,v\in P(v)$.
    In particular, we have $uv\notin E(G)$ in this case.
    Let $P(v)=\{u,v,a,b\}$ for some vertices $a,b\in V(G)$.
    Then we have $x\in \{a,b\}$, and we either have $R(v)=\{v,a\}$, or $R(v)=\{v,b\}$.
    Both~$\{v\}$ and $\{u,a,b\}$ are parts of~$Q_v$, so~$R$ is obtained from~$Q_v$ by adding the unique vertex of $R(v)\setminus\{v\}$ to~$\{v\}$.
    This means that~$R$ is uniquely determined by our choice of $x\in \{a,b\}$.
    Let~$R_a$ and~$R_b$ respectively denote this partition~$R$ in the case where $x=a$ and $x=b$, observing that~$R_a$ and~$R_b$ may or may not exist as vertices of~$\BlG$.
    Recall in particular that if $R_a$ and $R_b$ are vertices of $\BlG$, then $uv\notin E(G)$.
    Moreover, observe that if~$R$ and~$R'$ are distinct vertices from $\{R_a,R_b,R_1,R_2\}$ such that $\{R,R'\}\neq \{R_1,R_2\}$, and if $R,R'\in V(\BlG)$, then~$R$ and~$R'$ are not adjacent in~$\BlG$, since~$R$ contains a part $\{y,z\}$ such that~$y$ and~$z$ belong to distinct parts of~$R'$, each of size~$2$.

    It follows that~$Q_u$ and~$Q_v$ are double-closed if and only if~$R_1$ and~$R_2$ are vertices of~$\BlG$, which we have shown holds if and only if $uv\notin E(G)$ and $\ord{P}\leq k-2$.
    For the second claim, we have shown that if $uv\in E(G)$ and $\ord{P}=k-1$, then none of $R_a$,~$R_b$,~$R_1$, or~$R_2$ exist as vertices of~$\BlG$, and so every common neighbour of $Q_u$ and $Q_v$ belongs to $N[P]$.
    On the other hand, we have also shown that if $uv\notin E(G)$, then $R_2\in V(\BlG)\setminus N[P]$, and if $\ord{P}\leq k-2$, then $R_1\in V(\BlG)\setminus N[P]$, so there is a common neighbour of $Q_u$ and $Q_v$ outside of $N[P]$ in either case.
\end{proof}

The following lemma will allow us to use the property of being double-closed in order to detect edges of the underlying graph~$G$.

\clearpage

\begin{lemma}\label{lem:doubleclosureinformation}
    Let $P\in V(\BlG)$ be a reconstruction candidate.
    Let $u,v\in V(G)$.
    If there exist $Q_u\in N_u(P)$ and $Q_v\in N_v(P)$ which are double-closed, then $uv\notin E(G)$ and $\ord{P}\leq k-2$.
    Moreover, if every part of~$P$ has size at least~$4$, then if $uv\notin E(G)$ and $\ord{P}\leq k-2$, then there exist $Q_u\in N_u(P)$ and $Q_v\in N_v(P)$ which are double-closed.
\end{lemma}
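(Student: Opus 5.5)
The second statement follows directly from Lemma~\ref{lem:splitclosureinformation}. If every part of~$P$ has size at least~$4$, then the $u$-split neighbour~$Q_u$ and the $v$-split neighbour~$Q_v$ exist as vertices of~$\BlG$, because $\ord{P}\leq k-2$. They lie in~$N_u(P)$ and~$N_v(P)$ respectively. Lemma~\ref{lem:splitclosureinformation}(1) then says they are double-closed exactly when $uv\notin E(G)$ and $\ord{P}\leq k-2$. So all the work is in the first statement.

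For the first statement I would use Lemma~\ref{lem:reconstructioncandidatefacts}, which splits into two regimes. In the first, $\ord{P}=k$, so no neighbour of~$P$ is a split neighbour. In the second, $\ord{P}\leq k-1$ and every part of~$P$ has size~$1$ or at least~$4$. Moreover, since $C_P=n$, the sets $N_w(P)$ for $w\in V(G)$ are pairwise disjoint and pairwise non-adjacent. In particular $u\neq v$, $N_u(P)\cap N_v(P)=\emptyset$, and no $Q_u\in N_u(P)$ is obtained from~$P$ by merging~$\{u\}$ with some~$\{v\}$.

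Now take double-closed $Q_u,Q_v$, with adjacent common neighbours $R,R'\notin N[P]$. Each of $R,R'$ is obtained from~$Q_u$ by moving some vertex. By Observation~\ref{obs:vopenneighbourhoodclique}, that vertex is not~$u$, since otherwise the partition would lie in~$N[P]$. Likewise each is obtained from~$Q_v$ by moving a vertex other than~$v$. Since $Q_u$ and~$Q_v$ differ exactly in the positions of $u$ and~$v$, Observation~\ref{obs:identicalneighbours} forces the vertex moved from~$Q_u$ to be~$v$ and the vertex moved from~$Q_v$ to be~$u$. So $R$ is obtained from~$P$ by moving both~$u$ and~$v$, as in the proof of Lemma~\ref{lem:splitclosureinformation}. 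The plan is then a case analysis on the sizes of $P(u)$ and~$P(v)$, each either~$1$ or at least~$4$, and on how $Q_u$ and~$Q_v$ move $u$ and~$v$: by splitting, or by adding to an existing part.

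The goal is to show that two adjacent such $R,R'$ can exist only if $u$ and~$v$ end up together in one of them, which needs $uv\notin E(G)$, and the other has an extra part, which needs $\ord{P}\leq k-2$. This is the $R_1,R_2$ pattern of Lemma~\ref{lem:splitclosureinformation}. In the other configurations I would show there is at most one candidate~$R$, or that the candidates are pairwise non-adjacent, by the same "a part $\{y,z\}$ of one is split across two parts of size~$2$ of the other" argument. Lemma~\ref{lem:reconstructioncandidatefacts}(2) supplies edges between singleton parts, which kills the merging cases; in the regime $\ord{P}=k$, no partition with more parts than~$P$ is available.

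I expect the main obstacle to be the bookkeeping in this case analysis. The hardest cases are where $u$ and~$v$ are added into the same part, or into each other's parts. There I would explicitly list the at most two or three candidate partitions~$R$ and check their pairwise adjacency and their membership of~$\BlG$. I would try first to organise this by the final parts $R(u)$ and~$R(v)$.
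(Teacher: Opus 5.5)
Your argument for the second statement is exactly the paper's, and your use of Lemma~\ref{lem:reconstructioncandidatefacts} to set up the two regimes is sound. The gap is the reduction at the heart of the first statement.

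You claim that, because $Q_u$ and $Q_v$ ``differ exactly in the positions of $u$ and $v$'', Observation~\ref{obs:identicalneighbours} forces every common neighbour $R\notin N[P]$ to be obtained from $Q_u$ by moving $v$ and from $Q_v$ by moving $u$. Applied at $R$, that observation only says that the vertex $x$ moved between $R$ and $Q_u$ satisfies $Q_u(x)\neq Q_v(x)$. This holds for every vertex of every part touched by either move, not only for $u$ and $v$. Two configurations show the claim fails:
\begin{itemize}
\item Suppose $\{u,v,a,b\}\in P$ and $\ord{P}\leq k-1$ (e.g.\ $P=P^\dagger$ with a part of size exactly~$4$). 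Replacing $\{u,v,a,b\}$ by $\{u,a\}$ and $\{v,b\}$ gives a common neighbour of the $u$-split and $v$-split neighbours outside $N[P]$. It is reached by moving $a$ from $Q_u$ and $b$ from $Q_v$. These are the partitions $R_a,R_b$ in the proof of Lemma~\ref{lem:splitclosureinformation}, the very proof you cite as your model.
\item Suppose $\{u,b\},\{v,c\}\in P$ with $\{u,v,b,c\}$ independent, $Q_u$ adds $u$ to $\{v,c\}$, and $Q_v$ adds $v$ to $\{u,b\}$. Then the partition with parts $\{u,v\}$ and $\{b,c\}$ is a common neighbour outside $N[P]$, reached by moving $c$ from $Q_u$ and $b$ from $Q_v$.
\end{itemize}

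Double-closedness concerns an adjacent pair among \emph{all} common neighbours outside $N[P]$. A case analysis restricted to partitions obtained by moving $u$ and $v$ therefore cannot rule it out: you would only check adjacency within a proper subset of the candidates. The excluded class is exactly where the paper's proof does its real work:
\begin{itemize}
\item In the both-split case it invokes Lemma~\ref{lem:splitclosureinformation}, which handles $R_a$ and $R_b$.
\item In Case~1 it must allow $R$ to be obtained from $Q_u$ by moving the vertex of a singleton part~$A$.
\item In Case~2 it shows there is at most one common neighbour $R_1\notin N[P]$ with $R_1\in N_v(Q_u)\cap N_u(Q_v)$. It then proves that any other common neighbour $R_2\notin N[P]$, which lies in neither $N_v(Q_u)$ nor $N_u(Q_v)$, cannot be adjacent to $R_1$. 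The tools are $R_2\notin N_v(R_1)$, statement~2 of Lemma~\ref{lem:reconstructioncandidatefacts}, and a comparison of $\ord{R_1(v)}$ with $\ord{R_2(v)}$.
\end{itemize}
Your plan needs an argument of this kind for the common neighbours that move neither $u$ nor $v$.
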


\begin{proof}
    Suppose every part of~$P$ has size at least~$4$, and suppose $uv\notin E(G)$ and $\ord{P}\leq k-2$.
    Let~$Q_u$ be the $u$-split neighbour of~$P$ and let~$Q_v$ be the $v$-split neighbour of~$P$.
    Then~$Q_u$ and~$Q_v$ are double-closed by Lemma~\ref{lem:splitclosureinformation}.
    
    Now suppose there exist $Q_u\in N_u(P)$ and $Q_v\in N_v(P)$ which are double-closed.
    
    Suppose~$Q_u$ and~$Q_v$ are both split neighbours of~$P$.
    Then~$P$ has at most $k-1$ parts, so since~$P$ is a reconstruction candidate,~$P(u)$ and~$P(v)$ have size at least~$4$.
    By Lemma~\ref{lem:splitclosureinformation}, we therefore have both $uv\notin E(G)$ and $\ord{P}\leq k-2$.
    
    Suppose~$Q_u$ and~$Q_v$ are not both split neighbours of~$P$.
    Without loss of generality, assume~$Q_u$ is not a split neighbour of~$P$.
    Then~$Q_u$ is obtained from~$P$ by adding~$u$ to some $A\in P$.
    Let $R\notin N[P]$ be a common neighbour of~$Q_u$ and~$Q_v$.
    By Observation~\ref{obs:vopenneighbourhoodclique} we have $R\notin N_u(Q_u)$ and $R\notin N_v(Q_v)$ since $R\notin N[P]$.
    
    \textbf{Case 1:} Suppose~$Q_v$ is a split neighbour of~$P$.
    Then we have $\ord{P}<k$, so we know that $\ord{P(v)}\geq 4$ since~$P$ is a reconstruction candidate.
    Since $R\notin N_v(Q_v)$, we know that $\ord{R(v)}\leq 2$.
    If $\ord{R(v)}=2$, then~$R$ is obtained from~$Q_v$ by adding some~$v'$ to~$\{v\}$.
    In particular, then we have $Q_u(u)=Q_u(a)$ and $R(u)\neq R(a)$ for each $a\in A$.
    Since $R\notin N_u(Q_u)$, we know~$R$ is obtained from~$Q_u$ by moving every vertex in~$A$.
    In particular this means $\ord{A}=1$ and~$R$ is obtained from~$Q_u$ by moving the unique vertex $a\in A$.
    Observe that $a\notin P(v)$ since $\ord{P(v)}\geq 4$ so $P(v)\neq A$.
    Hence $\ord{R(v)}\geq 3$, a contradiction.
    So $\ord{R(v)}=1$, meaning~$R$ is the $v$-split neighbour of~$Q_u$ since $\ord{Q_u(v)}\geq 3$.
    In particular~$Q_u$ and~$Q_v$ have at most one common neighbour outside of~$N[P]$, contradicting that they are double-closed.
    
    \textbf{Case 2:}
    Suppose~$Q_v$ is not a split neighbour of~$P$.
    Then~$Q_v$ is obtained from~$P$ by adding~$v$ to some part $B\in P$.
    
    Our strategy for deriving a contradiction in this case is as follows.
    We will show that~$Q_u$ and~$Q_v$ have a unique common neighbour $R_1\notin N[P]$ obtained from~$Q_u$ by moving~$v$ and from~$Q_v$ by moving~$u$.
    We will then show that any other common neighbour $R_2\notin N[P]$ of~$Q_u$ and~$Q_v$ cannot be adjacent to~$R_1$.
    
    Suppose that $R_1\notin N[P]$ and $R_1\in N_v(Q_u)\cap N(Q_v)$.
    If $u\notin B$, define~$Q_u(B)$ to be the part of~$Q_u$ containing~$B$ (i.e.\ either~$B$ or $B\cup \{u\}$).
    If $u\in B$, define $Q_u(B)\coloneqq B\setminus\{u\}$.
    If $Q_u(B)=\emptyset$, we claim that~$R_1$ is obtained from~$Q_u$ by splitting~$v$.
    Otherwise we claim that~$R_1$ is obtained from~$Q_u$ by adding~$v$ to~$Q_u(B)$.
    
    Suppose that $B=\{u\}$, i.e.\ that $Q_u(B)=\emptyset$.
    Then we have $\{u,v\}\in Q_v$, and $\ord{A}\geq 3$ by statement~$2$ of Lemma~\ref{lem:reconstructioncandidatefacts}.
    This means that $\ord{Q_u(u)}=\ord{A\cup\{u\}}\geq 4$, so therefore we have $\ord{R_1(u)}\geq 3$, since $R_1\in N_v(Q_u)$ and $v\neq u$.
    Suppose~$R_1$ is not obtained from~$Q_u$ by splitting~$v$.
    Then $\ord{R_1(v)}\geq 2$ if $R_1(u)\neq R_1(v)$ and $\ord{R_1(v)}\geq 4$ if $R_1(u)= R_1(v)$.
    In either case, we have $\{u,v\}\notin Q_v$, a contradiction.
    
    Suppose that $B\neq \{u\}$.
    If $R_1(v)=R_1(b)$ for all $b\in B$, then~$R_1$ is obtained from~$Q_u$ by adding~$v$ to~$Q_u(B)$ as claimed.
    Suppose there is some $b\in B$ such that $R_1(v)\neq R_1(b)$.
    Then~$Q_v$ is obtained from~$R_1$ by adding~$b$ to~$R_1(v)$ since $B\cup\{v\}\in Q_v$ and $Q_v\notin N_v(R_1)$.
    If $R_1(v)\neq \{v\}$, then $R_1(v)\setminus\{v\}\in Q_u$ and $R_1(v)\setminus\{v\}$ is a non-empty subset of~$B$, which means $R_1(v)\setminus\{v\}=Q_u(B)$ as claimed.
    Suppose $R_1(v)=\{v\}$.
    Then $B=\{b\}$ so $R_1(b)\neq \{b\}$ by Observation~\ref{obs:identicalneighbours}.
    Since $b\neq u$ by hypothesis, and since~$R_1$ is obtained from~$Q_u$ by splitting~$v$, this means that $R_1(b)=\{u,b\}$, so we have $\{u\}\in Q_v$.
    This means that either $\{u,v\}\in P$, or $\{u\}\in P$, a contradiction to statement~$2$ of Lemma~\ref{lem:reconstructioncandidatefacts} in either case.
    
    In particular, there is at most one $R_1\notin N[P]$ such that $R_1\in N_v(Q_u)$ and $R_1\in N(Q_v)$.
    Moreover, any such~$R_1$ satisfies $R_1\in N_u(Q_v)$.
    Therefore, by symmetry between~$u$ and~$v$, there is at most one $R_1\notin N[P]$ satisfying both $R_1\in N_v(Q_u)$ and $R_1\in N_u(Q_v)$, and every other $R_2\notin N[P]$ satisfies $R_2\notin N_u(Q_v)$ and $R_2\notin N_v(Q_u)$.
    Observe that such an~$R_1$ exists if and only if $A\neq B$ or $uv\notin E(G)$, and let $R_2\notin N[P]$ be a common neighbour of~$Q_u$ and~$Q_v$ such that $R_2\notin N_u(Q_v)$ and $R_2\notin N_v(Q_u)$.
    
    Suppose $A=B$ and $uv\in E(G)$.
    Then $P(u)$,~$P(v)$, and~$A$ are all distinct, with $P(v), A\cup\{u\}\in Q_u$ and $A\cup\{v\}\in Q_v$.
    Since $R_2\notin N_u(Q_v)$ and $R_2\notin N_v(Q_u)$, moving from~$Q_u$ to~$R_2$ to~$Q_v$ must involve adding every $a\in A$ to~$Q_u(v)$ and removing all vertices other than~$v$ from~$Q_u(v)$.
    Therefore $\ord{A}+\ord{Q_u(v)\setminus\{v\}}\leq 2$, which means that $\ord{A}+\ord{P(v)}\leq 3$ since $P(v)=Q_u(v)$ because $P(u)\neq P(v)$.
    Hence, by statement~$2$ of Lemma~\ref{lem:reconstructioncandidatefacts}, we have $P(v)=\{v,v'\}$ and $A=\{a\}$ where $av'\in E(G)$.
    Therefore,~$a$ cannot be added to $\{v,v'\}$.
    Observing that $R_2\notin N_v(Q_u)$ and that the $v'$-split neighbour of~$Q_u$ is in~$N_v(Q_u)$,~$R_2$ is obtained from~$Q_u$ by adding~$v'$ to some part $C\in Q_u$.
    Then~$Q_v$ is obtained from~$R_2$ by adding~$a$ to~$\{v\}$.
    But we have $a\notin C$ since $av'\in E(G)$, so we have $C\cup \{v'\}\in Q_v$, a contradiction as $\{v'\}\in Q_v$ since we know $\{v,v'\}\in P$.
    
    Therefore, either $A\neq B$ or $uv\notin E(G)$.
    So in particular, there is exactly one vertex~$R_1$ such that $R_1\in N_v(Q_u)$ and $R_1\in N_u(Q_v)$.
    Since~$Q_u$ and~$Q_v$ are double-closed,~$R_2$ is adjacent to~$R_1$.
    Note that $R_2\notin N_v(R_1)$ by Observation~\ref{obs:vopenneighbourhoodclique} since we have both $Q_u\in N_v(R_1)$ and $R_2\notin N_v(Q_u)$.
    The situation is pictured in Figure~\ref{fig:doubleclosure}.
    
\begin{figure}[h!]
    \centering
    \begin{tikzpicture}[scale=0.8]
        \coordinate (A) at (0,0);
        \coordinate (B) at (30:3cm);
        \coordinate (C) at (-30:3cm);
        \coordinate (D) at (0:6cm);
        \coordinate (E) at (0:9cm);
    
        \node[circle, fill, inner sep=2pt, label=above:$P$] at (A) {};
        \node[circle, fill, inner sep=2pt, label=above:$Q_u$] at (B) {};
        \node[circle, fill, inner sep=2pt, label=above:$Q_v$] at (C) {};
        \node[circle, fill, inner sep=2pt, label=above:$R_1$] at (D) {};
        \node[circle, fill, inner sep=2pt, label=above:$R_2$] at (E) {};
        
        \draw[very thick] (A) -- (B) node[midway, above] {$u$};
        \draw[very thick] (A) -- (C) node[midway, above] {$v$};
        \draw[very thick] (D) -- (B) node[midway, above] {$v$};
        \draw[very thick] (D) -- (C) node[midway, above] {$u$};
        \draw[very thick] (D) -- (E) node[midway, above] {$\overline{v}$};
        \draw[very thick] (E) to[bend right=20] (B);
        \draw[very thick] (E) to[bend left=20] (C);
        \node at (5.8,1.7) {$\overline{u},\overline{v}$};
        \node at (5.8,-1.2) {$\overline{u},\overline{v}$};
    \end{tikzpicture}
    \caption{A situation in the proof of Lemma~\ref{lem:doubleclosureinformation}.
    An edge $ST$ with a label $x$ means $S$ and $T$ are obtained from each other by moving $x$.
    An edge $ST$ with a label $\overline{x}$ means $S$ and $T$ are \textit{not} obtained from each other by moving $x$.
    }
    \label{fig:doubleclosure}
\end{figure}
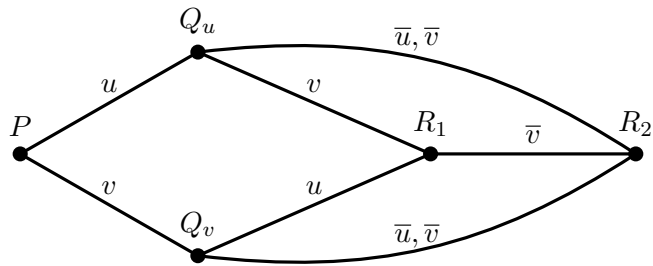

    Suppose there is some $b\in B$, $b\neq u$ such that $R_2(v)\neq R_2(b)$.
    Then~$R_1$ and~$Q_v$ are both obtained from~$R_2$ by adding~$b$ to~$R_2(v)$, meaning $R_1=Q_v$, a contradiction.
    So $R_2(v)=R_2(b)$ for all $b\in B$, $b\neq u$.
    But $Q_u(v)\neq Q_u(b)$ for all $b\in B$, $b\neq u$.
    Therefore if such a~$b$ exists, it is unique, and~$R_2$ is obtained from~$Q_u$ by adding~$b$ to~$Q_u(v)$.
    
    Suppose $B=\{b\}$ for some $b\neq u$.
    Then $A\cup\{u,b\}\in R_2$ and $\{v,b\}\in Q_v$.
    Since $R_2\in N_b(Q_u)$, we have $R_2\notin N_b(Q_v)$ by Observation~\ref{obs:vopenneighbourhoodclique} since~$Q_u$ and~$Q_v$ are not adjacent since they are double-closed.
    Therefore we have $A=\{v\}$, a contradiction to statement~$2$ of Lemma~\ref{lem:reconstructioncandidatefacts}.
    
    So $B=\{u\}$ or $B=\{b,u\}$ for some $b\in V(G)$.
    Observe that $\ord{R_1(v)}=\ord{B}$ in this case.
    We have $u,v\in B\cup \{v\}\in Q_v$ and $A\cup\{u\}\in Q_u$. 
    But~$Q_u$ can be obtained from~$Q_v$ in two steps via~$R_2$ without moving~$u$ or~$v$, so $A=P(v)$.
    
    Therefore $\ord{A}+\ord{B}\geq 4$ by statement~$2$ of Lemma~\ref{lem:reconstructioncandidatefacts}.
    If $B=\{u\}$, then we have $\ord{Q_u(v)}=\ord{A\cup\{u\}}\geq 4$, so we can calculate that $\ord{R_2(v)}\geq 3=\ord{R_1(v)}+2$.
    If $B=\{u,b\}$, then we have $\ord{Q_u(v)}\geq 3$, so we can calculate that $\ord{R_2(v)}=\ord{Q_u(v)\cup\{b\}}\geq 4=\ord{R_1(v)}+2$.
    However, since we know that $R_1\notin N_v(R_2)$, we know that~$R_1(v)$ and~$R_2(v)$ differ in size by at most~$1$.
    So we have obtained a contradiction in either case.
\end{proof}
    
Lemma~\ref{lem:doubleclosureinformation} will only be useful in the case where we have $k\geq \chi(G)+2$.
We prove the following lemma to deal with the case where $k=\chi(G)+1$.

\begin{lemma}\label{lem:lowerchiplusonecase}
    Suppose $k=\chi(G)+1$, and let $P\in V(\BlG)$ be a reconstruction candidate.
    Let~$uv$ be a non-edge of~$G$ and let $Q_u\in N_u(P)$ and $Q_v\in N_v(P)$.
    Then~$Q_u$ and~$Q_v$ have a common neighbour $R\notin N[P]$.
\end{lemma}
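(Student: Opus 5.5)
The plan is to construct the common neighbour~$R$ explicitly, by performing ``both moves at once'', and to fall back on a second construction that places~$u$ and~$v$ in the same part whenever the naive construction fails.
Write~$Q_u$ as obtained from~$P$ by moving~$u$ into a target~$A$, meaning either a new singleton (a split) or an existing part $A\in P$. Likewise, write~$Q_v$ as obtained by moving~$v$ into~$B$.

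First I pin down the shape of~$P$.
Since $\ord{P}\ge\chi(G)$ and $\ord{P}\le k=\chi(G)+1$, either $\ord{P}=k$ or $\ord{P}=\chi(G)=k-1$.
In the first case neither~$Q_u$ nor~$Q_v$ can be a split neighbour.
In the second case, statement~3 of Lemma~\ref{lem:reconstructioncandidatefacts} says that~$P$ has no parts of size~$2$ or~$3$, so every non-singleton part has size at least~$4$.
Statement~2 of Lemma~\ref{lem:reconstructioncandidatefacts} also restricts how singleton parts sit relative to small parts; I will use it to exclude the degenerate configurations below.

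\textbf{Main case.} Here the targets do not interfere: $A\ne P(v)$, $B\ne P(u)$, and not both moves are splits.
Let~$R$ be obtained from~$P$ by moving~$u$ into~$A$ and~$v$ into~$B$. If $A=B$ is an existing part, then~$u$ and~$v$ go into the same part, which is allowed because $uv\notin E(G)$.
Each add move does not increase the number of parts, and at most one split occurs. Hence $\ord{R}\le k$ unless $\ord{P}=k$ and there is a split, which is excluded above. So $R\in V(\BlG)$.
By construction~$R$ is adjacent to~$Q_u$ (move~$v$) and to~$Q_v$ (move~$u$).
To check that $R\notin N[P]$, I observe that~$R$ differs from~$P$ in the positions of both~$u$ and~$v$. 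Observation~\ref{obs:identicalneighbours}, together with a short check, rules out that a single move from~$P$ produces~$R$. The only danger is when~$u$ or~$v$ is a singleton whose removal merges parts, and statement~2 of Lemma~\ref{lem:reconstructioncandidatefacts} limits those cases.

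\textbf{Interfering and split cases.}
If both moves are splits, then $\ord{P}=\chi(G)$, so the composite would have $\chi(G)+2$ parts and is not a vertex of~$\BlG$. I instead take~$R$ to be the partition obtained from~$Q_u$ by adding~$v$ to~$\{u\}$; this is the partition~$R_2$ from the proof of Lemma~\ref{lem:splitclosureinformation}. It has $\chi(G)+1$ parts and is valid since $uv\notin E(G)$. It is obtained from~$Q_v$ by adding~$u$ to~$\{v\}$, and it lies outside~$N[P]$ because the parts of size at least~$4$ lose two vertices, exactly as in Lemma~\ref{lem:splitclosureinformation}.
If~$u$ is added to~$P(v)$, or~$v$ to~$P(u)$, or~$v$ is added to the singleton~$\{u\}$, I again aim for an~$R$ in which~$u$ and~$v$ share a part. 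For example, if~$v$ is moved into $P(u)=B$, take~$R$ to be obtained from~$Q_u$ by adding~$v$ to $Q_u(u)=A\cup\{u\}$; it is adjacent to~$Q_v$ via moving~$u$ from~$B\cup\{v\}$. I then verify membership in~$\BlG$ and non-membership in~$N[P]$ case by case, using the size constraints on~$P$.

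The main obstacle is the bookkeeping in the interfering cases, especially when~$P(u)$ or~$P(v)$ is a singleton, since the move then deletes a part. There one must show that the chosen~$R$ neither coincides with~$P$ or one of its neighbours nor exceeds~$k$ parts.
I would try first to reduce every such configuration to a forbidden one via statement~2 of Lemma~\ref{lem:reconstructioncandidatefacts}: a singleton part next to a part of size at most~$2$ with no edge between them. In the case $\ord{P}=\chi(G)$, I would also use that merging parts is impossible, since it would yield fewer than~$\chi(G)$ independent sets.
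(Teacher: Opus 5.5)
Your main case and your both-split case match the paper's proof. The gap is in the ``interfering'' cases, where the construction you propose fails.

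Take $Q_u$ obtained by adding $u$ to an existing part $A\neq P(v)$, and $Q_v$ obtained by adding $v$ to $B=P(u)$. Let $R$ be obtained from $Q_u$ by adding $v$ to $A\cup\{u\}$, as you suggest.
\begin{itemize}
\item $R$ is not adjacent to $Q_v$. We have $A\cup\{u,v\}\in R$ but $P(u)\cup\{v\}\in Q_v$, so passing from $Q_v$ to $R$ requires moving both $u$ and $v$ into $A$. Moving $u$ alone out of $P(u)\cup\{v\}$ leaves $v$ behind in $(P(u)\setminus\{u\})\cup\{v\}$.
\item $R$ need not even be an independent set partition, since nothing prevents $v$ from having a neighbour in $A$.
\end{itemize}
This is not just an unlucky example. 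In this configuration, any $R$ adjacent to $Q_u$ with $R(u)=R(v)$ has $A\cup\{u,v\}$ or $P(v)\cup\{u\}$ as a part. A direct check shows neither can be adjacent to $Q_v$ unless $P$ has a singleton part and a part of size at most~$2$ with no edge between them, which statement~2 of Lemma~\ref{lem:reconstructioncandidatefacts} forbids. The same happens symmetrically when $u$ is added to $P(v)$, including the subcase where $v$ is added to a singleton $\{u\}$.

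The repair is to drop the ``share a part'' idea. The simultaneous move also works in these cases, and this is what the paper does: it uses the simultaneous move everywhere except the both-split case, where it uses your $R_2$.
\begin{itemize}
\item \emph{Construction.} When $Q_v$ adds $v$ to $P(u)$, let $R$ be obtained from $Q_v$ by moving $u$ out of $P(u)\cup\{v\}$ to where $Q_u$ puts it. That is, into $A$, or into a new singleton if $Q_u$ splits $u$. Then $R$ is also obtained from $Q_u$ by adding $v$ to $P(u)\setminus\{u\}$, or by splitting $v$ off if $P(u)=\{u\}$. Treat the case where $u$ is added to $P(v)$ symmetrically.
\item \emph{Validity.} $R$ is an independent set partition because $Q_u$ and $Q_v$ are.
\item \emph{Part count.} $R$ has at most $\ord{P}$ parts when neither move is a split; if $P(u)=\{u\}$, the part $\{u\}$ is merely replaced by $\{v\}$. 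Otherwise it has at most $\ord{P}+1\leq k$ parts.
\item \emph{Outside $N[P]$.} $R$ differs from $P$ in the placement of both $u$ and $v$. Consider the swap where $u$ goes to $P(v)$ and $v$ to $P(u)$. There $R$ falls into $N[P]$ only if $P(u)=\{u\}$ and $P(v)$ is $\{v\}$ or $\{v,x\}$; in the latter case $R$ is obtained from $P$ by adding $x$ to $\{u\}$. Every such collapse consists of a singleton and a part of size at most~$2$ with no edge between them, so statement~2 excludes it.
\end{itemize}
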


\begin{proof}
    We first construct~$R$.
    Suppose~$Q_u$ and~$Q_v$ are respectively the $u$-split and $v$-split neighbours of~$P$.
    Then let~$R$ be the partition obtained from~$Q_u$ by adding~$v$ to~$\{u\}$.
    Otherwise, without loss of generality,~$Q_u$ is obtained from~$P$ by adding~$u$ to a part $A\in P$, and~$Q_v$ is either obtained from~$P$ either by adding~$v$ to a part $B\in P$, or by splitting~$v$. (Observe that~$B$ need not be distinct from~$A$.)
    In the former case, let~$R$ be the partition obtained from~$P$ by adding~$u$ to~$A$ and by adding~$v$ to~$B$.
    In the latter case, let~$R$ be the partition obtained from~$P$ by adding~$u$ to~$A$ and by splitting~$v$.
    
    Observe that~$R$ is an independent set partition of~$G$ in every case, since $uv\notin E(G)$.
    Moreover, observe that if either~$Q_u$ or~$Q_v$ are split neighbours, then $\ord{P}<k$, and so we have $\ord{R}\leq \ord{P}+1\leq k$.
    On the other hand, if neither~$Q_u$ nor~$Q_v$ are split neighbours, then we have $\ord{R}\leq \ord{P}\leq k$.
    Therefore~$R$ is a vertex of~$\BlG$ in every case.
    Now suppose that $R\in N[P]$.
    Suppose~$Q_u$ and~$Q_v$ are both split neighbours.
    Then $R(u)=\{u,v\}$, and $\ord{P(u)},\ord{P(v)}\geq 2$. Moreover, we have $\ord{P}<\ord{Q_u}\leq k$, so by statement~$3$ of Lemma~\ref{lem:reconstructioncandidatefacts}, we have $\ord{P(u)}\geq 4$.
    So~$R$ is obtained from~$P$ by adding~$u$ to~$\{v\}$, which is not possible as $\{v\}\notin P$.
    Therefore, without loss of generality,~$Q_u$ is obtained from~$P$ by adding~$u$ to some part $A\in P$.
    Then, we either have $A\cup\{u\}\in R$, or $A\cup\{u,v\}\in R$, or $A\cup \{u\}\setminus\{v\}\in R$.
    Suppose $A=\{a\}$ for some $a\in V(G)$.
    Then~$R$ is obtained from~$P$ by adding~$a$ either to~$\{u\}$ or to $\{u,v\}$, a contradiction to statement~$2$ of Lemma~\ref{lem:reconstructioncandidatefacts} in either case.
    So $\ord{A}\geq 2$, which means that~$R$ is obtained from~$P$ by adding~$u$ to~$A$, which means that $R=Q_u$, a contradiction.
    So we have $R\notin N[P]$.
\end{proof}

We now complete the proof of Theorem~\ref{thm:mainlower}.

\begin{proof}[Proof of Theorem~\ref{thm:mainlower}]
    Let $n\coloneq \ord{V(G)}$.
    Observe that we can determine~$n$ from~$\BlG$ by considering any vertex $P\in V(\BlG)$ with a maximal number of components in its neighbourhood~$\mathcal{N}(P)$, since then~$P$ is a reconstruction candidate by definition, so~$\mathcal{N}(P)$ has precisely~$n$ components by Lemma~\ref{lem:reconstructioncandidatefacts}.

    By Lemma~\ref{lem:chihajnal},~$G$ has an independent set partition~$P^\dagger$ into~$\chi(G)$ parts, where each part has size at least~$4$.
    Observe that $P^{\dagger}$ is a reconstruction candidate by Lemma~\ref{lem:reconstructioncandidatefacts}.
    
    Suppose $k=\chi(G)+1$, and let $P\in V(\BlG)$ be a reconstruction candidate.
    Let $Q,Q'\in N(P)$ be neighbours of $P$ which are not adjacent to each other.
    Then since we have $\ord{P}\geq \chi(G)=k-1$, we know by Lemma~\ref{lem:doubleclosureinformation} that~$Q$ and~$Q'$ are not double-closed. 
    On the other hand, if $k>\chi(G)+1$, then for any part $A\in P^\dagger$ and any two vertices $u,v\in A$, the $u$-split neighbour of~$P^\dagger$ and the $v$-split neighbour of~$P^\dagger$ are double-closed by Lemma~\ref{lem:splitclosureinformation}.
    Therefore, we know that $k>\chi(G)+1$ if and only if~$\BlG$ contains a reconstruction candidate~$P$ with two neighbours~$Q$ and~$Q'$ which are double-closed.
    In other words, we can determine whether $k>\chi(G)+1$ or $k=\chi(G)+1$.
    
    Now, for each reconstruction candidate $P\in V(\BlG)$, we construct a candidate graph $G_P$ on vertex set $\{1,\ldots,n\}$ as follows.
    Let~$\zeta$ be an arbitrary bijection from $\{1,\ldots,n\}$ to the components of~$\mathcal{N}(P)$.
    If $k=\chi(G)+1$, add an edge~$uv$ if and only if there exist some $Q_u\in N_{\zeta(u)}(P)$ and $Q_v\in N_{\zeta(v)}(P)$ such that every common neighbour of~$Q_u$ and~$Q_v$ belongs to~$N[P]$.
    Then, by Lemmas~\ref{lem:splitclosureinformation} and~\ref{lem:lowerchiplusonecase},~$G_P$ is isomorphic to a subgraph of~$G$, and~$G_{P^\dagger}$ is isomorphic to~$G$.
    In this case, we can recognise~$G$ by selecting any largest graph among the set of candidate graphs.
    If $k>\chi(G)+1$, add an edge~$uv$ if and only if for every $Q_u\in N_{\zeta(u)}(P)$ and $Q_v\in N_{\zeta(v)}(P)$, the vertices~$Q_u$ and~$Q_v$ are not double-closed.
    Then, by Lemma~\ref{lem:doubleclosureinformation}, we know that~$G_P$ is isomorphic to a supergraph of~$G$, and~$G_{P^\dagger}$ is isomorphic to~$G$.
    In this case, we can recognise~$G$ by selecting any smallest graph among the set of candidate graphs.
\end{proof}

\paragraph{Acknowledgment}\mbox{}\\*
The author thanks Jan van den Heuvel for the motivating discussions and guidance throughout the writing of this paper.

%% file: A_uppertheorem.tex
\newpage
\appendix
\section{Strengthening Theorem~\ref{thm:mainupperhalfcomplete}}\label{section:mainupper}

In this appendix, we will strengthen Theorem~\ref{thm:mainupperhalfcomplete} by giving a complete classification of the pairs of tuples~$(G_1,k_1)$ and~$(G_2,k_2)$ such that the upper Bell $k_1$-colouring graph of~$G_1$ is isomorphic to the upper Bell $k_2$-colouring graph of~$G_2$.
Let~$G$ be a graph on~$n$ vertices.
Recall that~$G'$ is the graph obtained from~$G$ by removing all of its universal vertices.
Define~$\GClaw$ to be the graph obtained from~$G'$ as follows: for each independent set $\{v_1,v_2,v_3\}\subseteq V(G)$ such that $v_1$,~$v_2$, and~$v_3$ are adjacent to every other vertex of~$G$, add all edges between $v_1$,~$v_2$, and~$v_3$ and add a vertex to~$G'$ adjacent to every vertex other than $v_1$,~$v_2$, and~$v_3$.
Equivalently, by noting that a vertex is universal in~$G$ if and only if it is isolated in its complement~$\comp{G}$, we can write $\comp{\GClaw}=\comp{G}^{\lowddagger}$, where~$\comp{G}^{\lowddagger}$ is the graph obtained from~$\comp{G}$ by removing all of its isolated vertices and replacing all triangle components with claw components.
As usual, for each natural number~$s$, denote by~$K_s$ and~$P_s$ the clique and path on~$s$ vertices respectively, and use $H_1+H_2$ to denote the disjoint union of two graphs~$H_1$ and~$H_2$.

\begin{theorem}\label{thm:mainuppercomplete}
    For each~$i\in \{1,2\}$, let~$G_i$ be a graph on~$n_i$ vertices and let~$k_i$ be a natural number.
    Then $\mathcal{B}_{\geq {k_1}}(G_1)\cong\mathcal{B}_{\geq {k_2}}(G_2)$ if and only if at least one of the following conditions holds for both $i\in \{1,2\}$.
    \begin{enumerate}
        \item\label{case:upperone}~$k_i>n_i$.
        \item\label{case:uppertwo}~$k_i=n_i$, or $k_i\leq n_i$ and~$G_i$ is a clique.
        \item\label{case:upperthree}~$G_1^{\claw} \cong G_2^{\claw}$ and $k_i = n_i-1$.
        \item\label{case:upperfour}~$G_1'\cong G_2'$, and $\chi(G_i)+1\leq k_i \leq n_i-2$, and $n_1-k_1=n_2-k_2$.
        \item\label{case:upperfive}~$G_1'\cong G_2'$ and $k_i\leq \chi(G_i)$.
        \item\label{case:uppersix}~$k_i\leq n_i-1$ and ${G_i'}\cong K_{N_i-1}+K_1$, where $N_i\coloneqq\ord{\mathcal{B}_{\geq {k_i}}(G_i)}$.
        \item\label{case:upperseven}~$k_i\leq n_i-1$ and ${G_i'}\cong K_{3}+K_1$, or $k_i=n_i-1$ and ${G_i'}\cong \overline{K_3}$.
        \item\label{case:uppereight}~$k_i\leq n_i-2$ and ${G_i'}\cong \overline{K_3}$, or $k_i=n_i-1$ and ${G_i'}\cong P_3+K_1$.
    \end{enumerate}
\end{theorem}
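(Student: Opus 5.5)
We reduce everything to graphs without universal vertices and treat the theorem as a case analysis on the parameter $s_i\coloneqq n_i-k_i$. By Observation~\ref{obs:addfulldegree}, $\mathcal{B}_{\geq k_i}(G_i)\cong\mathcal{B}_{\geq k_i-(n_i-n_i')}(G_i')$, where $n_i'=\ord{V(G_i')}$. Every condition in the list is also invariant under this replacement: $n_i-k_i$ is preserved, $\chi$ drops by the number of removed vertices, and the clique case corresponds to $G_i'$ empty. So we may assume that neither $G_i$ has universal vertices.

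For the `if' direction we handle each condition separately.

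Condition~\ref{case:upperone} gives empty graphs.

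Condition~\ref{case:uppertwo} gives the single vertex $K_1$, since only $P^*$ survives.

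Condition~\ref{case:upperfive} follows from Theorem~\ref{thm:maincomplete}, exactly as in the proof of Theorem~\ref{thm:mainupperhalfcomplete}.

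Condition~\ref{case:upperfour} is Lemma~\ref{lem:uppermainuseful}.

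Condition~\ref{case:upperthree} requires a direct description. When $k=n-1$, the graph $\mathcal{B}_{\geq n-1}(G)$ is $P^*$ joined to all of its neighbours, and $\mathcal{N}(P^*)\cong L(\comp{G})$ by Lemma~\ref{lem:type1neighbours}. The vertices at level $n-1$ induce exactly the line-graph adjacencies, and there are no triangle-type common neighbours because partitions with $n-2$ parts are excluded. Hence $\mathcal{B}_{\geq n-1}(G)$ is the cone over $L(\comp{G})$. Since $L(\comp{G})\cong L(\comp{G}^{\lowddagger})=L(\comp{\GClaw})$, the graph depends only on $\GClaw$.

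Conditions~\ref{case:uppersix}–\ref{case:uppereight} are small exceptional coincidences, which we verify by direct computation.
\begin{itemize}
\item If $G'\cong K_{N-1}+K_1$, then $\comp{G}$ is a star with $N-1$ leaves. Its non-edges are pairwise incident and no triangles exist, so $\mathcal{B}_{\geq k}(G)$ is the clique $K_N$ for every $k\leq n-1$. This matches $K_1$ when $N=1$.
\item For condition~\ref{case:upperseven}, both options give $K_4$: from $K_3+K_1$ with $\comp{G}$ a claw, and from $\comp{K_3}$ with $k=n-1$ as a cone over $L(K_3)$.
\item For condition~\ref{case:uppereight}, both options give the same graph on $5$ vertices. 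For $\comp{K_3}$ we get all Bell partitions of $3$ points, namely $K_4$ plus one vertex adjacent to the three middle vertices. For $P_3+K_1$ with $k=n-1$, $\comp{G}$ is a path on $4$ vertices, and the cone over $P_3$ is again $K_4$ minus an edge plus an apex of the right shape.
\end{itemize}
We will check these by hand, and we expect them to be listed precisely because they are the only coincidences.

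For the `only if' direction, suppose the two graphs are isomorphic. If $s_i<0$ for some $i$, both graphs are empty. If $s_i=0$, both are $K_1$; otherwise the graph has at least $2$ vertices unless $G_i$ is a clique. This yields conditions~\ref{case:upperone} and~\ref{case:uppertwo}. If $s_1,s_2\geq 2$, Lemma~\ref{lem:mainreconstruction} gives $G_1'\cong G_2'$. The argument in the proof of Theorem~\ref{thm:mainupperhalfcomplete}, using Lemma~\ref{lem:reconfigsize}, then gives condition~\ref{case:upperfour} or~\ref{case:upperfive}.

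The main obstacle is the mixed regime, where $s_1=1$ and $s_2\geq 1$. We plan to start from the structure of the graph in each regime.
\begin{itemize}
\item When $s=1$, the graph is a cone over $L(\comp{G})$, so it has a dominating vertex.
\item When $s\geq 2$, we check whether $\mathcal{B}_{\geq k}(G)$ can have a dominating vertex. A dominating vertex must be adjacent to $P^*$, and to every partition with $n-2$ parts. We expect this to force $\comp{G}$ to have all non-edges pairwise incident, so that $\comp{G}$ is a star or a triangle, possibly with extra universal vertices in $G$. That yields conditions~\ref{case:uppersix}–\ref{case:uppereight}.
\end{itemize}
If $s_1=s_2=1$, we compare the two graphs via their cone vertices. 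If the cone vertex is unique, removing it recovers $L(\comp{G_1})\cong L(\comp{G_2})$, and Observation~\ref{obs:lineinverse} gives $G_1^{\claw}\cong G_2^{\claw}$. If the cone vertex is not unique, the graph is a clique, which corresponds to condition~\ref{case:uppersix} or~\ref{case:upperseven}.

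Carrying out the dominating-vertex classification carefully, including all small graphs, is where the real work lies.
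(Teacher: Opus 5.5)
Your case split by $s_i=n_i-k_i$ reorganises the paper's argument, which instead splits by the isomorphism type of $\mathcal{B}_i$ and uses Lemmas~\ref{lem:uppermaindistinguish} and~\ref{lem:uppern-1case}. The gap is in the step for $s_1=s_2=1$, which rests on a false claim: that if the cone vertex is not unique then the graph is a clique.

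The cone over $L(H)$ gets a second dominating vertex whenever some edge of $H$ meets every other edge of $H$. For example, $G=P_4$ has no universal vertex and $\overline{G}\cong P_4$. So $\mathcal{B}_{\geq 3}(P_4)$ is the cone over $L(P_4)\cong P_3$, i.e.\ the diamond $K_4^-$, which has two dominating vertices. Your own example $P_3+K_1$ with $k=n-1$ gives $K_5^-$, which has three. For such graphs your argument produces none of the eight conditions: condition~3 holds, but you have not shown it.

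The missing observation is exactly how the paper proves Lemma~\ref{lem:uppern-1case}. Any two dominating vertices of a graph are twins, so the transposition swapping them is an automorphism. Composing with it, every isomorphism $\mathcal{B}_1\to\mathcal{B}_2$ can be modified to send $P^*_{G_1}$ to $P^*_{G_2}$. Hence $L(\overline{G_1})\cong L(\overline{G_2})$ with no uniqueness assumption, and Observation~\ref{obs:lineinverse} gives condition~3. The uniqueness split should simply be dropped.

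You need the same remark in the mixed regime. Once you know $\mathcal{B}_2\cong K_{m+1}$ or $K_5^-$, you can read off $L(\overline{G_1})\cong K_m$ or $K_4^-$. That gives $\overline{G_1'}$ as $K_{1,m}$ (or $K_3$ when $m=3$), or as the paw. But this only works because deleting \emph{any} dominating vertex of $\mathcal{B}_1$ gives the same graph up to isomorphism.

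Smaller points. The dominating-vertex classification you defer for $s\geq 2$ is short. If $\overline{G}$ has disjoint edges $uv$ and $wx$, the partition $R$ merging both pairs has $n-2\geq k$ parts and is not adjacent to $P^*$. The only common neighbours of $P^*$ and $R$ are the two single merges, which are non-adjacent by Lemma~\ref{lem:type1neighbours}. So no vertex dominates; this is the content of Lemma~\ref{lem:uppermaindistinguish}.

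Your computation for condition~8 is garbled. $\overline{P_3+K_1}$ is the paw (a triangle with a pendant edge), not $P_4$. Its line graph is the diamond, and the cone over that is $K_5^-$; the cone over $P_3$ has only four vertices.

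Finally, condition~6 as literally stated does not force $N_1=N_2$. For example, $K_2+K_1$ and $K_3+K_1$ with $k_i=n_i-1$ both satisfy it but give $K_3$ and $K_4$. So your `if' argument for it, like the paper's, tacitly requires a common value of $N$.
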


Given any of the eight conditions on $G_1$, $G_2$,~$k_1$, and~$k_2$ in Theorem~\ref{thm:mainuppercomplete}, it will not be difficult to show that~$\mathcal{B}_{\geq k_1}(G_1)$ and~$\mathcal{B}_{\geq k_2}(G_2)$ are isomorphic.
On the other hand, given an upper Bell colouring graph~$\BuG$ of a graph~$G$ on~$n$ vertices, we will show that in all but a small number of cases we can distinguish between the cases where $k\leq n-2$, $k=n-1$, and $k=n$.
(Observe that the case where $k>n$ is easy, since $k>n$ if and only if~$\BuG$ contains no vertices.)
The tools developed in Section~\ref{section:main} will then allow us to handle the case where $k\leq n-2$.
Moreover, we will show that in the case where $k=n-1$, we can recognise a partition~$P$ from which we can determine~$\GClaw$ using the local structure of~$\mathcal{B}_{\geq k}(G)$ near~$P$.
Handling the small number of remaining cases will complete the proof of Theorem~\ref{thm:mainuppercomplete}.

Throughout this appendix, we will use the same terminology introduced at the start of Section~\ref{section:main}.
Given a graph~$G$ on~$n$ vertices, we will use~$P^*_G$ to represent the partition of~$V(G)$ into~$n$ parts, each of size~$1$, and omit the subscript when it is clear from context.

\begin{lemma}\label{lem:uppermainuseful2}
    Let~$G_1$ and~$G_2$ be graphs on~$n_1$ and~$n_2$ vertices respectively.
    If $\GClaw_1\cong \GClaw_2$, then $\mathcal{B}_{\geq {n_1-1}}(G_1)\cong\mathcal{B}_{\geq {n_2-1}}(G_2)$.
\end{lemma}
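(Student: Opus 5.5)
The plan is to describe $\mathcal{B}_{\geq n-1}(G)$ explicitly as a cone over the line graph $L(\comp{G})$. Once that is done, the statement reduces to the facts about line graphs recorded around Whitney's Theorem~\ref{thm:Whitney} and Observation~\ref{obs:lineinverse}.

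\emph{Step 1: describe the vertices.} Let $G$ have $n$ vertices. An independent set partition with at least $n-1$ parts is one of two kinds. Either it is $P^*$, or it has exactly one part of size~$2$ and all other parts of size~$1$. In the second case the part of size~$2$ is an independent pair $\{u,v\}$, that is, an edge of $\comp{G}$. Hence the vertices of $\mathcal{B}_{\geq n-1}(G)$ are $P^*$ together with one vertex $Q_e$ for each $e\in E(\comp{G})$.

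\emph{Step 2: describe the edges.} Each $Q_e$ is obtained from $P^*$ by merging two singleton parts, so $P^*$ is adjacent to every $Q_e$. Part~1 of Lemma~\ref{lem:type1neighbours}, applied with $P=P^*$, says that $Q_e$ and $Q_f$ are adjacent if and only if $e$ and $f$ share a vertex. This lemma holds for $\mathcal{B}=\BuG$ with $k=n-1$, since it only concerns neighbours of $P^*$ and these are all present. Therefore $\mathcal{B}_{\geq n-1}(G)$ is the graph obtained from $L(\comp{G})$ by adding a dominating vertex, and its isomorphism type depends only on that of $L(\comp{G})$.

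\emph{Step 3: compare line graphs.} By the equivalent description given just before Theorem~\ref{thm:mainuppercomplete}, we have $\comp{\GClaw}=\comp{G}^{\lowddagger}$. Here $\comp{G}^{\lowddagger}$ is obtained from $\comp{G}$ by deleting isolated vertices and replacing each triangle component with a claw component. Deleting isolated vertices does not change the line graph. Since $L(K_3)\cong L(K_{1,3})$ and the line graph of a disjoint union is the disjoint union of the line graphs, replacing triangle components with claw components does not change it either. So $L(\comp{G})\cong L(\comp{G}^{\lowddagger})=L(\comp{\GClaw})$.

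Now suppose $\GClaw_1\cong\GClaw_2$. Then $\comp{\GClaw_1}\cong\comp{\GClaw_2}$, so $L(\comp{G_1})\cong L(\comp{G_2})$. Adding a dominating vertex to each side preserves isomorphism, which gives $\mathcal{B}_{\geq n_1-1}(G_1)\cong\mathcal{B}_{\geq n_2-1}(G_2)$.

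The main point needing care is Step~3, specifically checking that the definition of $\GClaw$ really matches $\comp{G}^{\lowddagger}$. In the complement, the added vertex becomes the centre of a claw on $v_1,v_2,v_3$, and the triangle $v_1v_2v_3$ of $\comp{G}$ is removed. The triple $\{v_1,v_2,v_3\}$ is a triangle component of $\comp{G}$ precisely because each $v_i$ is adjacent in $G$ to every vertex outside the triple. The paper states this equivalence, so I will rely on it and at most verify it briefly.
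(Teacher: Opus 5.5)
Your proof is correct and follows essentially the paper's argument. Both identify $\mathcal{B}_{\geq n-1}(G)$ with $L(\comp{G})$ plus a dominating vertex; the paper cites Lemma~\ref{lem:linegraph2}, which rests on the part of Lemma~\ref{lem:type1neighbours} you use. Both then compare line graphs via $L(K_3)\cong L(K_{1,3})$. The only difference is that the paper applies the cone description to $\GClaw_i$ and $G_i'$ and returns to $G_i$ via Observation~\ref{obs:addfulldegree}, whereas you apply it directly to $G_i$. You absorb the universal vertices by noting that isolated vertices of $\comp{G}$ do not affect $L(\comp{G})$, which is slightly more direct.
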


\begin{proof}
    For any graph~$H$, let~$H^+$ denote the graph obtained from~$H$ by adding a universal vertex.
    Observe that, for any graph~$G$ on~$n$ vertices,~$\mathcal{B}_{\geq n-1}(G)$ is just the closed neighbourhood~$\mathcal{N}[P^*]$, which is isomorphic to~$L(\comp{G})^+$ by Lemma~\ref{lem:linegraph2}.
    
    Let $i\in \{1,2\}$ be fixed.
    Let~$n_i'$ denote the number of vertices of~$G_i'$ and let~$n_i^{\lowclaw}$ denote the number of vertices of~$\GClaw$.
    Then we can calculate
    \begin{align}\label{eqn:appendixn-1}
        \mathcal{B}_{\geq n_i^{\lowclaw}-1}(\GClaw_i)\cong L(\comp{\GClaw})^+\cong L(\comp{G'})^+\cong \mathcal{B}_{\geq n_i'-1}(G'_i),
    \end{align}
    where we have used the fact that $L(\comp{\GClaw})\cong L(\comp{G'})$ since $L(K_{1,3})\cong L(K_3)$.
    
    Since $\GClaw_1\cong \GClaw_2$ and $n_1^{\lowclaw}=n_2^{\lowclaw}$, we know that $\mathcal{B}_{\geq n_1^{\lowclaw}-1}(\GClaw_1)\cong \mathcal{B}_{\geq n_2^{\lowclaw}-1}(\GClaw_2)$.
    Therefore, by~\eqref{eqn:appendixn-1}, we know that $\mathcal{B}_{\geq n_1'-1}(G_1')\cong \mathcal{B}_{\geq n_2'-1}(G_2')$.
    The result follows by Observation~\ref{obs:addfulldegree}.
\end{proof}

We now prove one direction of Theorem~\ref{thm:mainuppercomplete}.

\begin{proof}[Proof of Theorem~\ref{thm:mainuppercomplete}, `if' direction]
    We show that $\mathcal{B}_{\geq k_1}(G_1)\cong\mathcal{B}_{\geq k_2}(G_2)$ in each of the eight cases.
    Let $i\in \{1,2\}$ be fixed.
    \begin{enumerate}
        \item Suppose that $k_i>n_i$.
        Then~$\mathcal{B}_{\geq k_i}(G)$ is the empty graph with no vertices.
        \item Suppose that either $k_i=n_i$, or~$G_i$ is a clique and $k_i\leq n_i$.
        Then~$\mathcal{B}_{\geq k_i}(G)$ is a graph on a single vertex.
        \item Suppose that $\GClaw_1 \cong \GClaw_2$ and $k_i = n_i-1$.
        Then the claim follows by Lemma~\ref{lem:uppermainuseful2}.
        \item Suppose that $G_1'\cong G_2'$, and that $\chi(G_i)+1\leq k_i \leq n_i-2$, and $n_1-k_1=n_2-k_2$.
        Then the claim follows by Lemma~\ref{lem:uppermainuseful}.
        \item Suppose that $G_1'\cong G_2'$, and that $k_i\leq \chi(G_i)$. Then for each $i\in \{1,2\}$ we have $\mathcal{B}_{\geq k_i}(G_i)\cong \mathcal{B}(G_i)$, and so the claim follows by Theorem~\ref{thm:maincomplete}.
        \item Suppose that $k_i\leq n_i-1$, and that $G_i'\cong K_{N}+K_1$ for some $N\geq 1$.
        Let~$v$ be the unique vertex in the~$K_1$ component of~$G_i'$.
        Then there are exactly~$N$ partitions $P\in V(\mathcal{B})$, namely the partition~$P^*$ into $N+1$ parts of size~$1$, and the~$N$ partitions where $\ord{P(v)}=2$ and every other part has size~$1$.
        Moreover, each of these partitions are adjacent to each other.
        Hence~$\mathcal{B}_{\geq k_i}(G_i)$ is a clique on $N+1$ vertices.
        \item Suppose that either $k_i\leq n_i-1$ and $G_i'\cong K_3+K_1$, or $k_i=n_i-1$ and $G_i'\cong \comp{K_3}$.
        Then in both cases, there are exactly four partitions of~$G_i'$ into at least~$k_i$ parts, and these partitions are all adjacent to each other in~$\mathcal{B}_{\geq k_i}(G_i)$.
        Hence~$\mathcal{B}_{\geq k_i}(G_i)$ is isomorphic to the clique~$K_4$.
        \item Suppose either $k_i\leq n_i-2$ and $G_i'\cong\comp{K_3}$, or $k_i=n_i-1$ and $G_i'\cong P_3+K_1$.
        Then in both cases, there are exactly five partitions of~$G_i'$ into at least~$k_i$ parts, and with one exception these partitions are all adjacent to each other in~$\mathcal{B}_{\geq k_i}(G_i)$.
        Hence~$\mathcal{B}_{\geq k_i}(G_i)$ is isomorphic to~$K_5^-$, defined as the graph formed by removing any edge from the clique~$K_5$.\qedhere
    \end{enumerate}
\end{proof}

It remains to prove the other direction of Theorem~\ref{thm:mainuppercomplete}.
It will be useful to explicitly write down the recovery algorithm we used in the proofs of Theorem~\ref{thm:mainupperhalfcomplete} and Theorem~\ref{thm:main}.
Let~$G$ be a graph on~$n$ vertices, let $k\leq n-1$ be a natural number, and let $\mathcal{B}\coloneqq \mathcal{B}_{\geq k}(G)$.
We build a function $\phi_{\mathcal{B}}:V(\mathcal{B})\rightarrow \mathcal{G}$, where~$\mathcal{G}$ denotes the set of all graphs.
Let $P\in V(\mathcal{B})$ be given.
If~$\mathcal{N}(P)$ is not isomorphic to the line graph of any graph, set~$\phi_{\mathcal{B}}(P)$ equal to any graph.
On the other hand, suppose~$\mathcal{N}(P)$ is isomorphic to the line graph~$L(H)$ of some graph~$H$.
Let~$H^{\ddagger}$ be the graph obtained from~$H$ by removing all of its isolated vertices, and by replacing all of its triangle components with claw components.
Let~$T^{\ddagger}$ denote the number of triangles in~$H^{\ddagger}$, and let~$T_P$ denote the number of triangles~$\Delta$ in~$\mathcal{N}(P)$ such that the three vertices of~$\Delta$ have a common neighbour outside of~$N[P]$.
If $T_P-T^{\ddagger}>0$ and~$H^{\ddagger}$ has at least $T_P-T^{\ddagger}$ claw components, set~$\phi_\mathcal{B}(P)$ to be the complement of the graph obtained from~$H^{\ddagger}$ by replacing~$T_P-T^{\ddagger}$ claw components with triangle components.
Otherwise, set~$\phi_\mathcal{B}(P)$ equal to the complement of~$H^{\ddagger}$.

We will study the behaviour of the functions~$\phi_{\mathcal{B}_{\geq k}(G)}$ for various upper Bell colouring graphs~$\mathcal{B}_{\geq k}(G)$.
We first record the following, whose proof follows immediately from the first two paragraphs of the proof of Lemma~\ref{lem:mainreconstruction} in Subsection~\ref{subsect:mainstep2}.

\begin{lemma}\label{lem:explicitmainreconstruction}
    Let~$G$ be a graph on~$n$ vertices, and let $k\leq n-2$.
    Then the set $\Omega_5\subseteq V(\mathcal{B}_{\geq k}(G))$ of $P^*$-candidates is non-empty, and we have $\phi_{\BuG}(P)\cong G'$ for every $P\in \Omega_5$.
\end{lemma}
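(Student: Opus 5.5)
The plan is to unpack the definition of $\phi_{\mathcal{B}}$ and check that, for $P\in\Omega_5$, each ingredient matches what was established in the proof of Lemma~\ref{lem:mainreconstruction}. We work with $\mathcal{B}\coloneqq\BuG$ and $k\leq n-2$, so every result of Section~\ref{section:main} applies, including Lemma~\ref{lem:P^*dist2_2}.

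\emph{Step 1: $\Omega_5$ is non-empty.} By Lemma~\ref{lem:linegraph2} we have $P^*\in\Omega_4$, so $\Omega_4\neq\emptyset$. Since $\mathcal{B}$ is finite, some element of $\Omega_4$ maximises $T_{P'}$ over $\Omega_4$. Hence $\Omega_5\neq\emptyset$.

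\emph{Step 2: the line-graph part of $\phi_{\mathcal{B}}(P)$.} Fix $P\in\Omega_5\subseteq\Omega_4$. By Lemma~\ref{lem:linegraph2}, $\mathcal{N}(P)\cong L(\comp{G})$, so $\mathcal{N}(P)$ is a line graph and we are in the nontrivial branch of the definition of $\phi_{\mathcal{B}}$. Let $H$ be any graph with $L(H)\cong\mathcal{N}(P)$. By Whitney's Theorem~\ref{thm:Whitney}, in the form of Observation~\ref{obs:lineinverse}, $H^{\ddagger}\cong\compgddagger$, and this does not depend on the choice of $H$. Thus $T^{\ddagger}$ equals the number of triangles of $\compgddagger$. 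Since vertices are universal in $G$ exactly when they are isolated in $\comp{G}$, the graph $\comp{G'}$ is $\comp{G}$ with its isolated vertices deleted. Therefore $\comp{G'}$ is obtained from $H^{\ddagger}$ by turning exactly $T-T^{\ddagger}$ of its claw components back into triangles, where $T$ is the number of triangles of $\comp{G}$.

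\emph{Step 3: $T_P=T$.} This is the claim proved in the second paragraph of the proof of Lemma~\ref{lem:mainreconstruction}. There is a bijection $\xi$ from the triangles of $\mathcal{N}(P)$ to the triangles and claws of $\comp{G}$. Lemma~\ref{lem:P^*dist2} shows that triangles coming from claws never count towards $T_P$, which gives $T_P\leq T$. Lemma~\ref{lem:P^*dist2_2} gives $T_{P^*}\geq T$. Maximality in property~\hyperref[defn:prop5]{$5$} then forces $T_P=T_{P^*}=T$.

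\emph{Step 4: matching the definition.} The one point needing care is the case split in the definition of $\phi_{\mathcal{B}}$. We need that, whenever $T_P-T^{\ddagger}>0$, the graph $H^{\ddagger}$ has at least $T_P-T^{\ddagger}$ claw components. This holds because each triangle component of $\comp{G}$ not already counted in $T^{\ddagger}$ became a claw component of $H^{\ddagger}$. Such triangles are exactly the triangle components of $\comp{G}$, since a triangle lying in a non-triangle component survives into $H^{\ddagger}$ and is counted in $T^{\ddagger}$. Hence $T-T^{\ddagger}$ equals the number of triangle components of $\comp{G}$, which is at most the number of claw components of $H^{\ddagger}$.

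When $T_P=T^{\ddagger}$, we are in the ``otherwise'' branch, and there $\comp{G'}\cong H^{\ddagger}$ directly. In both cases $\phi_{\mathcal{B}}(P)$ is the complement of a graph isomorphic to $\comp{G'}$, so $\phi_{\mathcal{B}}(P)\cong G'$. The only real obstacle is the bookkeeping in Step 4; the rest is quoted.
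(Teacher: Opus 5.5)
Your proposal is correct and follows the paper's route. The paper proves this lemma simply by citing the first two paragraphs of the proof of Lemma~\ref{lem:mainreconstruction}, which are your Steps~1--3. Your Step~4 adds one useful check that the paper leaves implicit: $H^{\ddagger}$ has at least $T_P-T^{\ddagger}$ claw components, so the correct branch of the definition of $\phi_{\mathcal{B}}$ applies.
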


\begin{lemma}\label{lem:P^*reconstructionn-1}
Let~$G$ be a graph on~$n$ vertices.
Then $\phi_{\mathcal{B}_{\geq n-1}(G)}(P^*)\cong \GClaw$.
\end{lemma}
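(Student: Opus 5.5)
The plan is to compute $\phi_{\mathcal{B}}(P^*)$ directly from its definition, with $\mathcal{B}\coloneqq\mathcal{B}_{\geq n-1}(G)$. The point is that $\mathcal{B}$ is exactly the closed neighbourhood $\mathcal{N}[P^*]$. The vertices of $\mathcal{B}$ are the independent set partitions with $n$ or $n-1$ parts. These are $P^*$ itself and the partitions obtained from $P^*$ by merging two singletons $\{u\},\{v\}$ with $uv\notin E(G)$, and these latter partitions are precisely the neighbours of $P^*$. Note also that $P^*$ and its neighbours in $\mathcal{B}$ are the same as in $\mathcal{B}(G)$, since $\mathcal{B}$ is an induced subgraph containing all of them.

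First, I will identify $\mathcal{N}(P^*)$ as a line graph. Every neighbour of $P^*$ is a type~$1$ neighbour in the sense of Definition~\ref{defn:neighbourtypes}, so $\psi_{P^*}$ is the bijection $N(P^*)\to E(\comp{G})$ of Lemma~\ref{lem:psibij}. By the first part of Lemma~\ref{lem:type1neighbours}, two neighbours are adjacent if and only if their $\psi$-images are incident. Hence $\mathcal{N}(P^*)\cong L(\comp{G})$, exactly as in the first paragraph of the proof of Lemma~\ref{lem:linegraph2}. This argument only uses the local structure near $P^*$, so it applies verbatim here even though $k=n-1$.

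Consequently, in the definition of $\phi_{\mathcal{B}}(P^*)$ we are in the case where $\mathcal{N}(P^*)\cong L(H)$ for some graph $H$. The function $\phi$ uses some such $H$, which need not equal $\comp{G}$. However, by Observation~\ref{obs:lineinverse} the graph $H^{\ddagger}$ is determined by $L(H)$ up to isomorphism. Since $L(H)\cong L(\comp{G})$, it follows that $H^{\ddagger}\cong\comp{G}^{\lowddagger}$, whichever $H$ is chosen.

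Next, I will compute $T_{P^*}$. Since $V(\mathcal{B})=N[P^*]$, no vertex of $\mathcal{B}$ lies outside $N[P^*]$, so no triangle in $\mathcal{N}(P^*)$ has a common neighbour outside $N[P^*]$. Thus $T_{P^*}=0$, and $T_{P^*}-T^{\ddagger}\leq 0$. We therefore fall into the ``otherwise'' branch, giving $\phi_{\mathcal{B}}(P^*)=\comp{H^{\ddagger}}\cong\comp{\comp{G}^{\lowddagger}}$. By the identity $\comp{\GClaw}=\comp{G}^{\lowddagger}$ stated at the start of this appendix, this is $\GClaw$.

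There is no serious obstacle; the only points needing care are the following.
\begin{enumerate}
\item The well-definedness of $H^{\ddagger}$ up to isomorphism, which comes from Observation~\ref{obs:lineinverse}.
\item The degenerate case where $G$ is a clique. Then $\mathcal{N}(P^*)$ is empty, so $H^{\ddagger}$ is the empty graph and $\phi_{\mathcal{B}}(P^*)$ is the empty graph. This agrees with $\GClaw=G'$, which is empty because every vertex of $G$ is universal.
\item The identity $\comp{\GClaw}=\comp{G}^{\lowddagger}$ itself, which holds on the vertex sets involved. Both sides delete exactly the isolated vertices of $\comp{G}$, that is, the universal vertices of $G$. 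Moreover, each triangle component $\{v_1,v_2,v_3\}$ of $\comp{G}$ becomes a claw component in both, using the added centre vertex.
\end{enumerate}
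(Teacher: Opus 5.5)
Your proposal is correct and follows the paper's proof essentially step for step. You identify $\mathcal{N}(P^*)\cong L(\comp{G})$ (the paper simply cites Lemma~\ref{lem:linegraph2}, whose proof for $P^*$ is exactly your argument via Lemma~\ref{lem:type1neighbours}), use that $V(\mathcal{B}_{\geq n-1}(G))=N[P^*]$ to get $T_{P^*}=0$, and conclude that $\phi$ returns the complement of $\comp{G}^{\ddagger}=\comp{\GClaw}$. Your extra remarks on the choice of $H$ and on the clique case are sound and only make explicit what the paper leaves implicit.
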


\begin{proof}
    Recall that we can write $\comp{\GClaw}=\comp{G}^{\lowddagger}$, where~$\comp{G}^{\lowddagger}$ is the graph obtained from~$\comp{G}$ by removing all isolated vertices and replacing all triangle components with claw components.
    
    By Lemma~\ref{lem:linegraph2}, we have $\mathcal{N}(P^*)\cong L(\comp{G})$.
    Since $P^*\in V(\mathcal{B}_{\geq n-1}(G))$ is a universal vertex, there are no vertices $R\notin N[P^*]$.
    So we know that $T_{P^*}=0$.
    Therefore,~$\phi_{\mathcal{B}_{\geq n-1}(G)}(P^*)$ is the complement of~$\comp{G}^{\lowddagger}$ by definition, as claimed.
\end{proof}

As discussed at the start of this appendix, we would like to show that, given an upper Bell $k$-colouring graph~$\BuG$ of a graph~$G$ on~$n$ vertices, in all but a small number of cases we can determine whether $k\leq n-2$, or $k=n-1$, or $k=n$.
The following lemma shows that the existence of a universal vertex in~$\BuG$ almost always guarantees that $n-1\leq k\leq n$.
Recall that~$K_5^-$ is the graph formed by removing any edge from the clique~$K_5$.

\begin{lemma}\label{lem:uppermaindistinguish}
    Let~$G$ be a graph on~$n$ vertices and let $k\leq n$ be a natural number.
    Suppose~$\BuG$ is not a clique or isomorphic to~$K_5^-$.
    Then $k\leq n-2$ if and only if~$\BuG$ contains no universal vertices.
\end{lemma}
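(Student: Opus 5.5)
The forward direction is the easy one, and I would prove its contrapositive: if $k\in\{n-1,n\}$ then $\BuG$ has a universal vertex (or is a clique). If $k=n$, then $\BuG$ consists of the single vertex~$P^*$, so it is a clique. If $k=n-1$, every vertex of $\BuG$ other than~$P^*$ is a partition with exactly $n-1$ parts. Such a partition is obtained from~$P^*$ by merging two singleton parts, so it is adjacent to~$P^*$. Hence $P^*$ is universal. It follows that if $\BuG$ has no universal vertex, then $k\le n-2$.

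For the converse, assume $k\le n-2$ and that some $P\in V(\BuG)$ is universal. I will show that $\BuG$ is then a clique or isomorphic to~$K_5^-$. The key point is that every partition with $n-2$ parts lies in $\BuG$. There are two kinds: those with one part of size~$3$, corresponding to a triangle of~$\comp{G}$, and those with two parts of size~$2$, corresponding to two disjoint edges of~$\comp{G}$. I split according to whether $P=P^*$.

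\emph{Case $P=P^*$.} No partition with $n-2$ parts is adjacent to~$P^*$, so none can exist. Thus $\comp{G}$ has no triangle and no two disjoint edges. Since $\comp{G}$ has no two disjoint edges, its edges pairwise intersect; as it has no triangle, they form a star (together with isolated vertices). A star has no triangle and no two disjoint edges, so no partition with fewer than $n-1$ parts exists at all. Hence $\BuG=\mathcal{N}[P^*]$. By Lemma~\ref{lem:linegraph2}, $\mathcal{N}(P^*)\cong L(\comp{G})$, and the line graph of a star is a clique, so $\BuG$ is a clique.

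\emph{Case $P\neq P^*$.} Since $P$ is adjacent to~$P^*$, it is obtained from~$P^*$ by merging some $\{u\}$ and~$\{v\}$ with $uv\in E(\comp{G})$. I would then derive structure on~$\comp{G}$ in three steps.
\begin{itemize}
\item By Lemma~\ref{lem:type1neighbours}, every edge of~$\comp{G}$ must meet $\{u,v\}$; otherwise the corresponding neighbour of~$P^*$ is not adjacent to~$P$.
\item Suppose $ux$ and $vy$ are edges of~$\comp{G}$ with $x\neq y$ and $x,y\notin\{u,v\}$. Then the partition with parts $\{u,x\}$ and $\{v,y\}$ (all other parts singletons) has $n-2$ parts, so it lies in $\BuG$. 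It differs from~$P$ in the parts of four vertices, so it is not adjacent to~$P$, a contradiction. Hence all edges of~$\comp{G}$ other than~$uv$ form a star at~$u$, or a star at~$v$, or lie in a triangle $uvx$.
\item In the triangle case, any further edge $uy$ (with $y\ne x$) makes the partition with parts $\{v,x\}$ and $\{u,y\}$ non-adjacent to~$P$ by the same argument. Hence $\comp{G}$ minus its isolated vertices is exactly a triangle.
\end{itemize}

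This leaves two subcases. If $\comp{G}$ is a star up to isolated vertices, the computation of the previous case shows that $\BuG$ is a clique. If it is a triangle, then $G'\cong\comp{K_3}$, so $\BuG$ has exactly five vertices, and by the computation in the `if' direction (case~\ref{case:uppereight}) it is $K_5^-$. Both conclusions contradict the hypothesis.

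The main obstacle is this exhaustive case analysis in the case $P\ne P^*$. The care needed is in ruling out configurations with several edges of~$\comp{G}$ at $u$ and at~$v$, which requires producing an explicit partition with $n-2$ parts that is non-adjacent to~$P$. I would organise this by the set of neighbours of~$u$ and of~$v$ in $\comp{G}-uv$, as in the three steps above.
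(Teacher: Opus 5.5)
Your proof is correct, but it is organised differently from the paper's. You assume a universal vertex $P$ exists (with $k\le n-2$) and split on whether $P=P^*$ or $P$ is the merge of some $\{u\},\{v\}$. In each case you extract constraints on $\comp{G}$:
\begin{itemize}
\item if $P=P^*$: no triangle and no two disjoint edges;
\item if $P$ is the merge of $\{u\},\{v\}$: every edge meets $\{u,v\}$, and there is no crossing pair $ux$, $vy$.
\end{itemize}
Either way, $\comp{G}$ is reduced to a star or a triangle.

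The paper instead splits directly on the structure of $\comp{G}$. If $\comp{G}$ has two disjoint edges $uv$ and $wx$, it takes $P^*$ and the partition $R$ with parts $\{u,v\}$ and $\{w,x\}$. These two are non-adjacent, and their only common neighbours are the two merges $Q_1,Q_2$, which are themselves non-adjacent. A universal vertex would have to be a common neighbour of $P^*$ and $R$, so this one configuration rules out every universal vertex at once. Otherwise the edges of $\comp{G}$ pairwise meet, and the paper reaches the same star/clique and triangle/$K_5^-$ endgame as you.

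The paper's witness argument is shorter and never needs to locate the universal vertex. Your local argument does the same work vertex by vertex, and as a by-product shows that any universal vertex must be $P^*$ or a merge neighbour of $P^*$; that is not needed here.

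Three small points.
\begin{itemize}
\item ``It differs from $P$ in the parts of four vertices'' does not by itself exclude adjacency, since moving one vertex can change the parts of many vertices. You should add that all parts of $P$ and of $R$ have size at most $2$, so moving a single vertex $z$ changes only the parts of vertices in $P(z)\cup R(z)$, at most three. Alternatively, note that $P$ has $n-1$ parts and $R$ has $n-2$, and check the possible single moves.
\item Your third step is already implied by the second. If $u$ and $v$ both have neighbours in $\comp{G}$ besides each other, the second step forces those neighbour sets to be the same single vertex $x$.
\item Your ``forward'' and ``converse'' labels are swapped relative to the statement, though both directions are covered.
\end{itemize}
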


\begin{proof}
    If $n-1\leq k\leq n$, then~$P^*$ is a universal vertex.
    Now suppose $k\leq n-2$.
    We consider two cases.
    
    \textbf{Case 1:} Suppose there exist two edges $\{u,v\}$ and $\{w,x\}$ of~$\comp{G}$ which are not incident with each other.
    Let~$Q_1$ be the partition obtained from~$P^*$ by merging~$\{u\}$ and~$\{v\}$.
    Let~$Q_2$ be the partition obtained from~$P^*$ by merging~$\{w\}$ and~$\{x\}$.
    Let~$R$ be the partition obtained from~$P^*$ by replacing $\{u\}$, $\{v\}$,~$\{w\}$, and~$\{x\}$ by $\{u,v\}$ and $\{w,x\}$.
    Observe that~$R$ is not a neighbour of~$P^*$, and that~$Q_1$ is not a neighbour of~$Q_2$.
    Moreover,~$Q_1$ and~$Q_2$ are the only common neighbours of~$P^*$ and~$R$.
    Hence every vertex in~$\BuG$ has at least one non-neighbour, i.e.~$\BuG$ contains no universal vertices.
        
    \textbf{Case 2:} Suppose all edges of~$\comp{G}$ are incident with each other.
    If~$\comp{G}$ contains a triangle, then every edge of~$\comp{G}$ lies in this triangle, so~$\BuG$ is isomorphic to~$K_5^-$ by direct computation, a contradiction.
    So~$\comp{G}$ does not contain a triangle, which means that every edge of~$\comp{G}$ shares a common vertex~$v$.
    Therefore, the only vertices of~$\mathcal{B}_{\geq k}(G)$ are~$P^*$ and the~$\ord{E(\comp{G})}$ partitions obtained from~$P^*$ by merging~$\{u\}$ and~$\{v\}$ for some $uv\in E(\comp{G})$.
    In other words, we can write~$\mathcal{B}_{\geq k}(G)= N[P^*]$. However, by Lemma~\ref{lem:linegraph2}, we know that~$N(P^*)$ is a clique, so therefore $N[P^*]=\mathcal{B}_{\geq k}(G)$ is also a clique, a contradiction.
\end{proof}

We now turn our attention towards the special case where $k=n-1$ for an upper Bell $k$-colouring graph~$\BuG$ of a graph~$G$ on~$n$ vertices.

\begin{lemma}\label{lem:uppern-1case}
    Let~$G_1$ and~$G_2$ be graphs on~$n_1$ and~$n_2$ vertices respectively and let $k_1=n_1-1$ and $k_2=n_2-1$.
    Suppose $\mathcal{B}_{\geq k_1}(G_1)\cong\mathcal{B}_{\geq k_2}(G_2)$.
    Then $\GClaw_1\cong \GClaw_2$.
\end{lemma}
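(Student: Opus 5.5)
The plan is to identify $\mathcal{B}_{\geq n_i-1}(G_i)$ with the graph obtained from the line graph $L(\comp{G_i})$ by adding a universal vertex, strip off that added vertex in an isomorphism-invariant way, and then invert the line graph using Observation~\ref{obs:lineinverse}.

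\emph{Step 1: the shape of $\mathcal{B}_{\geq n_i-1}(G_i)$.} For each $i\in\{1,2\}$ write $\mathcal{B}_i\coloneqq \mathcal{B}_{\geq n_i-1}(G_i)$. Every independent set partition with at least $n_i-1$ parts is either $P^*_{G_i}$ or obtained from it by merging two singletons $\{u\},\{v\}$ with $uv\in E(\comp{G_i})$. Hence $\mathcal{B}_i=\mathcal{N}[P^*_{G_i}]$ and $P^*_{G_i}$ is a universal vertex of $\mathcal{B}_i$. By Lemma~\ref{lem:linegraph2} (applied with $k=n_i-1$, which is covered by the standing hypothesis $k\leq n-1$ of Section~\ref{section:main}), we have $\mathcal{N}(P^*_{G_i})\cong L(\comp{G_i})$. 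Writing $H^+$ for $H$ with a universal vertex added, we therefore get $\mathcal{B}_i\cong L(\comp{G_i})^+$, as already used in the proof of Lemma~\ref{lem:uppermainuseful2}.

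\emph{Step 2: cancelling the added universal vertex.} Let $\sigma:\mathcal{B}_1\to\mathcal{B}_2$ be an isomorphism. Then $\sigma(P^*_{G_1})$ is a universal vertex of $\mathcal{B}_2$, though not necessarily $P^*_{G_2}$. The key point is that in any graph the transposition of two universal vertices is an automorphism. Composing $\sigma$ with the transposition of $\sigma(P^*_{G_1})$ and $P^*_{G_2}$ (or with the identity if they coincide) gives an isomorphism sending $P^*_{G_1}$ to $P^*_{G_2}$. This restricts to an isomorphism $\mathcal{N}(P^*_{G_1})\cong\mathcal{N}(P^*_{G_2})$, so $L(\comp{G_1})\cong L(\comp{G_2})$.

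\emph{Step 3: inverting the line graph.} By Observation~\ref{obs:lineinverse}, the graph $\comp{G_i}^{\lowddagger}$ is determined up to isomorphism by $L(\comp{G_i})$, so $\comp{G_1}^{\lowddagger}\cong\comp{G_2}^{\lowddagger}$. Recall from the start of this appendix that $\comp{\GClaw_i}=\comp{G_i}^{\lowddagger}$. Taking complements then gives $\GClaw_1\cong\GClaw_2$.

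The degenerate case where some $\comp{G_i}$ has no edges (so $G_i$ is complete, $\mathcal{B}_i$ is a single vertex and $L(\comp{G_i})$ is empty) needs no separate treatment. Step 2 still applies, so both $L(\comp{G_i})$ are empty, both $\comp{G_i}^{\lowddagger}$ are the empty graph on no vertices, and both $\GClaw_i$ are empty.

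The only real subtlety is Step 2: an isomorphism of the closed neighbourhoods need not map $P^*_{G_1}$ to $P^*_{G_2}$, which is why the transposition argument is needed. The rest is a direct combination of Lemma~\ref{lem:linegraph2} with Whitney's Theorem in the form of Observation~\ref{obs:lineinverse}.
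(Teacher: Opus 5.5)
Your proof is correct and follows the paper's argument. The key step in both is composing the given isomorphism with the transposition of the universal vertices $\sigma(P^*_{G_1})$ and $P^*_{G_2}$; the paper then reads off $\GClaw_i$ as $\phi_{\mathcal{B}_i}(P^*_{G_i})$ via Lemma~\ref{lem:P^*reconstructionn-1}, which is exactly your Steps~1 and~3 (Lemma~\ref{lem:linegraph2} together with Observation~\ref{obs:lineinverse}) packaged as a function.

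One small caution: when $k=n-1$, $P^*$ need not satisfy property~1, since nothing lies outside $N[P^*]$. So rather than citing all of Lemma~\ref{lem:linegraph2}, it is cleaner to derive $\mathcal{N}(P^*_{G_i})\cong L(\comp{G_i})$ directly from part~1 of Lemma~\ref{lem:type1neighbours} and the obvious bijection between $N(P^*_{G_i})$ and $E(\comp{G_i})$.
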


\begin{proof}
    Let $\mathcal{B}_1\coloneqq\mathcal{B}_{\geq k_1}(G_1)$ and $\mathcal{B}_2\coloneqq\mathcal{B}_{\geq k_2}(G_2)$.
    Observe that, for each $i\in \{1,2\}$, we know that~$P^*_{G_i}$ is a universal vertex in~$\mathcal{B}_i$ since $k_i=n_i-1$.
    By hypothesis, there is a graph isomorphism $\theta:V(\mathcal{B}_1)\rightarrow V(\mathcal{B}_2)$.
    Therefore,~$\theta(P^*_{G_1})$ is a universal vertex in~$\mathcal{B}_2$.
    Hence, the map $\xi:V(\mathcal{B}_2)\rightarrow V(\mathcal{B}_2)$ which swaps~$\theta(P_{G_1})$ with~$P^*_{G_2}$ and sends every other vertex to itself is a graph isomorphism.
    It follows that $\phi_{\mathcal{B}_2}(P^*_{G_2})\cong \phi_{\mathcal{B}_1}(P^*_{G_1})$, since $\xi\circ \theta$ is a graph isomorphism from~$V(\mathcal{B}_1)$ to~$V(\mathcal{B}_2)$ which carries~$P^*_{G_1}$ to~$P^*_{G_2}$.
    
    By Lemma~\ref{lem:P^*reconstructionn-1}, we know that $\phi_{\mathcal{B}_i}(P_{G_i}^*)\cong \GClaw_i$ for both $i\in \{1,2\}$.
    The result follows.
\end{proof}

We now complete the proof of Theorem~\ref{thm:mainuppercomplete} by applying the results we have proved in this section, and tidying up the remaining cases.

\begin{proof}[Proof of Theorem~\ref{thm:mainuppercomplete}, `only if' direction]
    Suppose $\mathcal{B}_{\geq {k_1}}(G_1)\cong\mathcal{B}_{\geq {k_2}}(G_2)$.
    Let $i\in \{1,2\}$ be fixed.
    As shorthand, define $\mathcal{B}_i\coloneqq\mathcal{B}_{\geq {k_i}}(G_i)$.
     
    We show that in each of the following cases, at least one of the eight conditions in the statement of Theorem~\ref{thm:mainuppercomplete} holds.
        
    \textbf{Case 1:} Suppose~$\mathcal{B}_i$ has no vertices.
    Then we know that $k_i>n_i$, so condition~$\ref{case:upperone}$ holds.

    \textbf{Case 2:} Suppose~$\mathcal{B}_i$ has exactly one vertex.
    Then we know that either $k_i=n_i$, or~$G_i$ is a clique and $k_i\leq n_i$, so condition~$\ref{case:uppertwo}$ holds.

    \textbf{Case 3:} Suppose~$\mathcal{B}_i$ has at least two vertices, and is neither a clique nor isomorphic to~$K_5^-$.
    Then we know that $k_i\leq n_i-1$, since otherwise~$\mathcal{B}_i$ would only have one vertex.
    \begin{itemize}
        \item \textbf{Subcase 3.1:} Suppose~$\mathcal{B}_i$ contains a universal vertex.
        Then by Lemma~\ref{lem:uppermaindistinguish} we know that $k_i=n_i-1$.
        Hence, by Lemma~\ref{lem:uppern-1case}, we know that $\GClaw_1\cong \GClaw_2$.
        So condition~$\ref{case:upperthree}$ holds.

        \item \textbf{Subcase 3.2:} Suppose~$\mathcal{B}_i$ contains no universal vertices.
        Then by Lemma~\ref{lem:uppermaindistinguish} we know that $k_i\leq n_i-2$.
        By Theorem~\ref{thm:mainupperhalfcomplete}, we know that $G_1'\cong G_2'$, and that either $k_i\leq \chi(G_i)$ holds, or both $\chi(G_i) + 1 \leq  k_i\leq n_i-2$ and $n_1-k_1=n_2-k_2$ hold.
        In the latter case, condition~$\ref{case:upperfour}$ holds.
        In the former case, condition~$\ref{case:upperfive}$ holds.
    \end{itemize}
        
    \textbf{Case 4:} Suppose~$\mathcal{B}_i$ is a clique on $N_i\geq 2$ vertices.
    Then $k_i\leq n_i-1$, or else~$\mathcal{B}_i$ would only have one vertex.
    Moreover, by direct computation,~$\phi_{\mathcal{B}_i}(P)$ is isomorphic to the disjoint union of cliques $K_{N_i-1}+K_1$ for each $P\in V(\mathcal{B}_i)$.
    
    If $k_i=n_i-1$, then we know that $\phi_{\mathcal{B}_i}(P^*_{G_i})\cong \GClaw_i$ by Lemma~\ref{lem:P^*reconstructionn-1}, so in particular we know that $\GClaw_i\cong K_{N_i-1}+K_1$.
    If $k_i\leq n_i-2$, then by Lemma~\ref{lem:explicitmainreconstruction} we know that $\Omega_5\subseteq V(\mathcal{B}_i)$ is a non-empty set of vertices, and that $\phi_{\mathcal{B}_i}(P)\cong G'_i$ for every $P\in \Omega_5$.
    So in particular we know that $G'_i\cong K_{N_i-1}+K_1$.
    Observe however that $K_{N_i-1}+K_1$ does not contain any independent sets of size~$3$, so in this case we also have $\GClaw_i \cong K_{N_i-1}+K_1$ by definition of~$\GClaw_i$.
    \begin{itemize}
        \item \textbf{Subcase 4.1:}
        If $N_i\neq 4$, then~$\comp{\GClaw_i}$ does not contain any~$K_{1,3}$ components, so $\GClaw_i\cong G'_i$ by definition of~$\GClaw_i$.
        Hence~$G_i'$ is isomorphic to $K_{N_i-1}+K_1$.
        So condition~$\ref{case:uppersix}$ holds.
        
        \item \textbf{Subcase 4.2:}
        If $N_i=4$, then~$\comp{\GClaw_i}$ is a claw.
        Therefore, by definition of~$\GClaw_i$, we know that~$\comp{G_i'}$ is either a claw or a triangle.
        In other words,~$G_i'$ is isomorphic either to $K_3+K_1$, or to~$\comp{K_3}$.
        If $G_i'\cong \comp{K_3}$, then by direct computation we know that~$\mathcal{B}_i$ is not a clique if $k_i\leq n_i-2$.
        Therefore $k_i=n_i-1$ in this case.
        So condition~$\ref{case:upperseven}$ holds.
    \end{itemize}
    
    \textbf{Case 5:} Suppose~$\mathcal{B}_i$ is isomorphic to~$K_5^-$.
    Then we have $k_i\leq n_i-1$, or else~$\mathcal{B}_i$ would only have one vertex.
    Moreover, for each $P\in V(\mathcal{B}_i)$, by direct computation we know that~$\phi_{\mathcal{B}_i}(P)$ is isomorphic either to~$\comp{K_3}$ or $P_3+K_1$.
    
    If $k_i\leq n_i-2$, then by Lemma~\ref{lem:explicitmainreconstruction} we know that $\Omega_5\subseteq V(\mathcal{B}_i)$ is a non-empty set of vertices, and that $\phi_{\mathcal{B}_i}(P)\cong G'_i$ for every $P\in \Omega_5$.
    So therefore~$G'_i$ is isomorphic either to~$\comp{K_3}$, or to $P_3+K_1$.
    If $k_i=n_i-1$, then we know that $\phi_{\mathcal{B}_i}(P^*_{G_i})\cong \GClaw_i$ by Lemma~\ref{lem:P^*reconstructionn-1}, so~$\GClaw_i$ is isomorphic either to~$\comp{K_3}$, or to $P_3+K_1$.
    Observe however that neither~$K_3$ nor $\comp{P_3+K_1}$ contain a claw as a component.
    Therefore by definition of~$\GClaw_i$, we conclude that~$G'_i$ is isomorphic either to~$\comp{K_3}$ or to $P_3+K_1$ in this case also.

    If~$G'_i$ is isomorphic to~$\comp{K_3}$, then direct computation shows that $\mathcal{B}_{\geq n_i-1}(G_i)\cong K_4\ncong K_5^-$, meaning that $k_i\leq n_i-2$ in this case.
    If~$G'_i$ is isomorphic to $P_3+K_1$, then direct computation shows that if $k_i\leq n_i-2$, then~$\mathcal{B}_i$ has at least~$6$ vertices, and so in particular is not isomorphic to~$K_5^-$.
    Therefore we know that $k_i=n_i-1$ in this case.
    So condition~$\ref{case:uppereight}$ holds.
\end{proof}